\documentclass[a4paper,11pt]{article}
\usepackage[utf8]{inputenc}
\usepackage[T1]{fontenc}

\usepackage{indentfirst}
\usepackage{enumitem}
\usepackage{amsmath}
\usepackage{amssymb,amsmath,amsthm}
\usepackage{ebproof}
\usepackage{aliascnt}
\usepackage{hyperref}
\usepackage[capitalise]{cleveref}
\usepackage{ragged2e}
\usepackage{adjustbox}
\usepackage [autostyle, english = american]{csquotes}
\MakeOuterQuote{"}

\usepackage[backend=biber,style=alphabetic]{biblatex}
\usepackage[affil-it]{authblk}

\usepackage{tikz-cd}
\usetikzlibrary{decorations.markings,intersections}
\tikzset{shorten <>/.style={shorten >=#1,shorten <=#1}}
\tikzset{every picture/.prefix code=\DisableQuotes}

\tikzset{%
scalearrow/.style n args={3}{
  decoration={
    markings,
    mark=at position (1-#1)/2*\pgfdecoratedpathlength
      with {\coordinate (#2);},
    mark=at position (1+#1)/2*\pgfdecoratedpathlength
      with {\coordinate (#3);},
    },
  postaction=decorate,
  }
}

\usepackage{quiver}
\usepackage[mathcal]{euscript}
\usepackage{ stmaryrd }
\usepackage{ dsfont }
\usepackage{upgreek}
\usepackage{ mathrsfs }

\renewenvironment{abstract}
 {\small
  \begin{center}
  \bfseries \abstractname\vspace{-.5em}\vspace{0pt}
  \end{center}
  \list{}{%
    \setlength{\leftmargin}{2mm}
    \setlength{\rightmargin}{\leftmargin}%
  }%
  \item\relax}
 {\endlist}
 
 \let\oldtheorem\newtheorem
\RenewDocumentCommand{\newtheorem}{s m o m O{}}{%
\IfBooleanTF{#1}%
{\oldtheorem{#2}{#4}}%
{\IfNoValueTF{#3}{\oldtheorem{#2}{#4}[#5]}%
{\newaliascnt{#2}{#3}%
\oldtheorem{#2}[#2]{#4}%
\aliascntresetthe{#2}}}}

\newtheorem{theorem}{Theorem}[section]
\newtheorem{proposition}[theorem]{Proposition}
\newtheorem{lemma}[theorem]{Lemma}
\newtheorem{corollary}[theorem]{Corollary}

\theoremstyle{definition}
\newtheorem{definition}{Definition}[section]

\theoremstyle{remark}
\newtheorem{remark}{Remark}[section]

\title{Flat functors in the context of fibration categories}
\author{El Mehdi Cherradi}
\affil{IRIF - CNRS - Universit\'e Paris Cit\'e \\MINES ParisTech - Universit\'e PSL}
\date{}

\begin{document}

\maketitle

\begin{abstract}
 We investigate the connection between left exact $\infty$-functors between finitely complete quasicategories and exact functors between fibration categories, describing a procedure to approximate flat $\infty$-functors of the former type by exact functors of the latter type. As an application, we recover a proof of the DK-equivalence between the relative category of fibration categories and that of finitely complete quasicategories.
\end{abstract}

\tableofcontents

\newpage 

\section*{Introduction}
\addcontentsline{toc}{section}{Introduction}

While "general" models for $(\infty,1)$-categories ought to replicate the concepts of category theory in a homotopy-enabled context, they do not necessarily provide strict notions of composition and identity morphisms. As such, objects such as quasicategories do not usually come with a $1$-categorical structure. It is however convenient for many purposes to work present $(\infty,1)$-categories in term of $1$-categories with additional structure, such as model categories or fibration categories. When working with specific categories of $(\infty,1)$-categories, the process of replacing a quasicategory with a strict presentation thereof need not be restrictive. Specifically, there exist DK-equivalences:

\begin{enumerate}
 \item $$\mathbf{CMC} \to \mathbf{PrL}$$
from the relative category $\mathbf{CMC}$ of combinatorial model categories, left Quillen functors and left Quillen equivalence to the relative category $\mathbf{PrL}$ of presentable quasicategories, left adjoint $\infty$-functors and equivalences (this is established in \cite[Theorem 1.1]{pavlov2025combinatorial}).
\item $$\mathbf{FibCat} \to \mathbf{QCat}_{lex}$$ from the relative category of fibration categories, exact functors and DK-equivalences to the relative category of finitely complete quasicategories, left exact $\infty$-functors and equivalences (as proved in \cite[Theorem 4.9]{szumilo2014two}).
\end{enumerate}

In this document, our goal is to provide a new outlook on the correspondence between fibration categories and finitely complete quasicategories by means of the theory of flat functors and its $(\infty,1)$-categorical counterpart.

A functor between finitely complete categories is flat if and only if it preserves finite limits. While the definition of flat functors makes sense in a more general context (see Section 6.3 of \cite{borceux1994handbook}), we will only be considering flat functors between finitely complete categories, so that the notion coincides with that of finite limit-preserving functors. As a matter of fact, we will consider flat $\infty$-functors between $(\infty,1)$-categories presented by fibration categories. A subtle point is that fibration categories need not have all finite limits as $1$-categories, but the $(\infty,1)$-categories they present are indeed finitely complete. Likewise, exact functors present finite limit-preserving $\infty$-functors, namely, the flat functors we consider, but they need not be limit-preserving as $1$-functors.

Additionally, the framework of fibration categories provides a tool to compute finite limits in the corresponding $(\infty,1)$-category, especially pullbacks, by means of (special) $1$-categorical limits (e.g, pullbacks along fibrations). As such, the morphisms between fibration categories, that is, the exact functors, are not just presenting flat $\infty$-functors: they also preserve these special $1$-categorical limits, which encode the $\infty$-categorical ones.

One cannot expect exact functors to be flat in the $1$-categorical sense, as the preservation property they satisfy, by definition, only deals with particular pullbacks (pullbacks along fibrations). It is reasonable, nonetheless, to expect the theory of such functors to be richer, in some sense, than the theory of flat $\infty$-functors in general. Precisely, we are interested in the connection between flat functors and left Kan extensions. It is known that a functor $F : C \to E$, valued in some Grothendieck topos $E$, is (internally) flat precisely when its left Kan extension $\mathbf{Lan_y} : \mathbf{Set}^{C^{op}} \to E$ along the Yoneda embedding preserves finite limits (see \cite{maclane2012sheaves}, Corollary 3 in VII.9.1). In this section, we will study the left Kan extensions of exact functors. Explicitly, we will show in \cref{rlan_c} that it is possible to define such an extension, or rather an approximation of it, as a span of exact functor; hence reconciling the correct homotopy behavior with the rigidify of a $1$-categorical presentation that relies on fibrations and weak equivalences.

\section{Fibration categories and exact functors}

Recall the definition of a fibration category:

\begin{definition}
 A fibration category is a category $\mathcal{F}$ equipped with two classes of morphisms $\mathbf{W}$ (the weak equivalences) and $\mathbf{F}$ (the fibrations) that are stable under composition, and such that:
 
 \begin{itemize}
  \item $\mathcal{F}$ admits a terminal object $*$, and the unique map $x \to *$ is a fibration for every object $x$.
  \item $\mathcal{F}$ admits pullbacks along fibrations, and the base change of a fibration is a fibration.
  \item Trivial fibrations (fibrations that are also weak equivalences) are stable under pullback.
  \item The class of weak equivalence $\mathbf{W}$ satisfies the $2$-out-of-$3$ property.
  \item For every object $x$, there is a factorization of the diagonal $$x \to px \to x \times x$$ where $x \to px$ is a weak equivalence and $px \to x\times x$ is a fibration.  
 \end{itemize}

A functor $F : \mathcal{F} \to \mathcal{F}'$ between fibration categories is \textit{exact} when it preserves the corresponding structure: it maps fibrations (resp. weak equivalences) to fibrations (resp. weak equivalences), and preserves the terminal object as well as pullbacks along fibrations. 
\end{definition}

\begin{definition}
\label{kscpo_0}
 We define $\mathbf{Sp}_w$ to be the "homotopical span" category, that is, the following category
 $$\bullet \stackrel{\sim}{\leftarrow} \bullet \stackrel{\sim}{\rightarrow} \bullet$$
 where both maps are weak equivalences. $\mathbf{Sp}_w$ admits a Reedy category structure (which is an inverse one): the apex has degree $1$, and the two others objects have degree $0$.
\end{definition}
 
\begin{definition} 
 For $\mathcal{C}$ a fibration category, we write $P\mathcal{C}$ for the category of Reedy fibrant diagrams from $\mathbf{Sp}_w$ to $\mathcal{T}$.
\end{definition}

\begin{remark}
\label{span_to_product}
 By definition, the objects of  $P\mathcal{C}$ are Reedy fibrant spans $$x_0 \leftarrow X \rightarrow x_1$$ and the morphisms are natural transformations between such diagrams. Hence, the morphisms of $P\mathcal{C}$ correspond to diagrams:

\[\begin{tikzcd}[ampersand replacement=\&]
	{x_0} \&\& X \&\& {x_1} \\
	{y_0} \&\& Y \&\& {y_1}
	\arrow[from=1-1, to=2-1]
	\arrow["\sim"{description}, two heads, from=1-3, to=1-1]
	\arrow["\sim"{description}, two heads, from=1-3, to=1-5]
	\arrow[from=1-3, to=2-3]
	\arrow[from=1-5, to=2-5]
	\arrow["\sim"{description}, two heads, from=2-3, to=2-1]
	\arrow["\sim"{description}, two heads, from=2-3, to=2-5]
\end{tikzcd}\]
The same information may also be presented by a single commutative square
\[\begin{tikzcd}[ampersand replacement=\&]
	X \&\& Y \\
	\\
	{x_0 \times x_1} \&\& {x_1 \times y_1}
	\arrow[from=1-1, to=1-3]
	\arrow[two heads, from=1-1, to=3-1]
	\arrow[two heads, from=1-3, to=3-3]
	\arrow[from=3-1, to=3-3]
\end{tikzcd}\]
where we identified the objects, i.e the Reedy fibrant spans, with a single map into a product.
\end{remark}

Note that $P\mathcal{T}$ inherits a fibration category structure such that the two projection functor $\partial_0, \partial_1:  P\mathcal{T} \to \mathcal{T}$ are exact functors.

As an important first step, we would like to construct exact functors between fibration categories from homotopical functors that fail to preserve fibrations. This is the content of the following lemma: 

\begin{lemma}
\label{corr}
 Let $F : \mathcal{C} \to \mathcal{D}$ be a functor between fibration categories. Assume that $F$ maps pullbacks along fibrations to pullback squares that are homotopy pullbacks, that $F(*_\mathcal{C}) \to *_\mathcal{D}$ is a weak equivalence and that weak equivalences are sent to weak equivalences by $F$. Then, in the following pullback square, 
\[\begin{tikzcd}[ampersand replacement=\&]
	{\mathcal{C}'} \&\& {P\mathcal{D}} \\
	\\
	{\mathcal{C} \times \mathcal{D}} \&\& {\mathcal{D} \times \mathcal{D}}
	\arrow["u", from=1-1, to=1-3]
	\arrow["{<\alpha,\beta>}"', from=1-1, to=3-1]
	\arrow["\lrcorner"{anchor=center, pos=0.125}, draw=none, from=1-1, to=3-3]
	\arrow["{<\partial_0,\partial_1>}", from=1-3, to=3-3]
	\arrow["{F \times id_{\mathcal{D}}}"', from=3-1, to=3-3]
\end{tikzcd}\]
computed in $\mathbf{Cat}$, $\mathcal{C}'$ inherits a  fibration category structure and the two projections $\alpha$ and $\beta$ are exact functors.

Moreover, if $P\mathcal{D}$ is a path-object for $\mathcal{D}$, with a map $\iota  : \mathcal{D} \to P\mathcal{D}$, then we have the following diagram of fibration categories
\[\begin{tikzcd}[ampersand replacement=\&]
	\& {\mathcal{C}} \\
	{\mathcal{C}} \&\& { \mathcal{D}} \\
	\& {\mathcal{C}'}
	\arrow["{id_\mathcal{C}}"', from=1-2, to=2-1]
	\arrow["F", from=1-2, to=2-3]
	\arrow["m"{description}, dashed, from=1-2, to=3-2]
	\arrow["\alpha", from=3-2, to=2-1]
	\arrow["\beta"', from=3-2, to=2-3]
\end{tikzcd}\]
where $m : \mathcal{C} \to \mathcal{C}'$ is a weak equivalence.
\end{lemma}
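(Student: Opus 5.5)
Objects of $\mathcal{C}'$ are triples $(c, d, X)$ with $c\in\mathcal{C}$, $d\in\mathcal{D}$ and $X \to Fc\times d$ a Reedy fibrant span in $\mathcal{D}$; both legs $X\to Fc$ and $X \to d$ are trivial fibrations. Morphisms are pairs $(f: c\to c',\, g: d\to d')$ together with a compatible map $X\to X'$. The plan is to put on $\mathcal{C}'$ the structure making both $\alpha,\beta$ exact and $u$ "Reedy-like":
\begin{itemize}
\item a morphism is a weak equivalence when $f$ is (equivalently $g$, or $X\to X'$, by 2-out-of-3, since $F$ preserves weak equivalences and the legs are trivial fibrations);
\item a morphism is a fibration when $f$ and $g$ are fibrations and the relative matching map $X \to X'\times_{Fc'\times d'}(Fc\times d)$ is a fibration in $\mathcal{D}$.
\end{itemize}
The immediate difficulty is that $Ff$ need not be a fibration, so $Fc\times d \to Fc'\times d'$ need not be one. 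The pullback $X'\times_{Fc'\times d'}(Fc\times d)$ is then not a priori available. To fix this, I will impose instead that $X \to X'\times_{d'} d$ is a fibration and that the composite $X \to X' \times_{d'} d \to d$ together with the map to $Fc$ behave well. Concretely, the working definition is that $X\to X'\times_{d'} d$ is a fibration, a pullback that exists because $g$ is a fibration, and the whole square is otherwise arbitrary. This makes $\beta$ and the composite to $\mathcal{D}$ visibly exact.

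With this, the axioms are checked in order.
\begin{enumerate}
\item \emph{Terminal object and factorizations.} The terminal object is $(*, *, X_*)$, where $X_*$ is obtained by factoring $F(*)\to *$ into a weak equivalence followed by a fibration and taking a Reedy fibrant replacement. For a morphism $(c,d,X)\to(c',d',X')$ with $f,g$ fibrations, pullbacks are computed componentwise: $c\times_{c'} c''$, $d\times_{d'}d''$, and $X\times_{X'}X''$ in $\mathcal{D}$. Factorizations of the diagonal are built componentwise as well, using path objects in $\mathcal{C}$ and $\mathcal{D}$ and a Reedy factorization for the $X$-part.
\item \emph{The key hypothesis.} For the pullback triple to be an object, the span $X\times_{X'}X'' \to F(c\times_{c'}c'')\times(d\times_{d'}d'')$ must again have trivial-fibration legs. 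Since $F(c\times_{c'}c'')$ is only a homotopy pullback of $Fc\to Fc'\leftarrow Fc''$, I will compare it with the strict pullback of the $X$'s. The induced map is a weak equivalence because $F$ sends the pullback to a homotopy pullback. The leg is then a fibration because of the matching condition, and hence a trivial fibration. This is the step where the hypotheses on $F$ are really used.
\item \emph{Remaining axioms.} Stability of (trivial) fibrations under pullback and 2-out-of-3 are inherited componentwise. Exactness of $\alpha$ and $\beta$ is immediate from the componentwise description.
\end{enumerate}

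For the second part, I define $m(c) = (c, Fc, \iota Fc)$, where $\iota : \mathcal{D}\to P\mathcal{D}$ sends $y$ to a chosen path-object span $y \leftarrow Py \to y$. Then $\alpha m = \mathrm{id}$ and $\beta m = F$. To get the weak equivalence of fibration categories, note that $\alpha m=\mathrm{id}$, so it suffices to prove that $\alpha$ is a weak equivalence (DK-equivalence); $m$ then follows by 2-out-of-3. For $\alpha$, I will check the approximation properties:
\begin{enumerate}
\item $\alpha$ reflects weak equivalences, by the definition above.
\item Any map $\alpha(c,d,X)$-targeted morphism $c_0 \to c$ lifts up to weak equivalence, using the span $X$ to transport along $Fc_0 \to Fc$. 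This is done by factoring and pulling back.
\end{enumerate}

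The main obstacle is getting the fibration class right so that pullbacks exist despite $F$ not preserving fibrations, and proving in step~2 that the pulled-back span still has trivial-fibration legs. I would first try the relative-matching-over-$d$ definition and verify via the homotopy-pullback hypothesis, and fall back on requiring also $X\to Fc$ relative fibrancy, with $Ff$ replaced through $X$, if that fails.
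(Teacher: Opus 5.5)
There is a genuine gap, and it is in the choice of fibrations. Your first bullet already gives the right definition, which is the paper's: $(f,g,h)$ is a fibration when $f$ and $g$ are fibrations and the gap map $X \to P := X'\times_{Fc'\times d'}(Fc\times d)$ is a fibration. You discard it because $Fc\times d\to Fc'\times d'$ need not be a fibration, but that does not matter. $X'\to Fc'\times d'$ is a fibration because $x'$ is a Reedy fibrant span, so $P$ always exists.

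The weaker condition you adopt instead (only $X\to X'\times_{d'}d$ a fibration) breaks step 2, already for $F=\mathrm{id}$ on Kan complexes. Take:
\begin{itemize}
\item $E$ a contractible Kan complex with two vertices;
\item $x'=(E,*,E\times E)$ with left leg $\mathrm{pr}_1$;
\item $x=(E\times E,*,E\times E)$ with left leg the identity;
\item the morphism $(\mathrm{pr}_1,\mathrm{id}_*,\mathrm{id}_{E\times E})\colon x\to x'$.
\end{itemize}
This morphism satisfies your condition. Yet its pullback along itself has $c$-part $(E\times E)\times_E(E\times E)\cong E^3$ and apex $E\times E$, with leg $(a,b)\mapsto(a,b,b)$. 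That leg misses vertices of the connected $E^3$, so it is not a fibration. Hence the resulting span is not Reedy fibrant, and one checks that no pullback of this cospan in $\mathcal{C}'$ is preserved by $\alpha$.

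Your fallback of adding a separate relative condition in the $Fc$-direction does not repair this. Fibrations $X\to X'\times_{Fc'}Fc$ and $X\to X'\times_{d'}d$ need not induce a fibration into their fibre product over $X'$, and that fibre product is exactly $P$. Only the joint condition works.

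With the correct definition, the real difficulty is a different one: $X\to X'$ need not be a fibration, so the apex of the pullback cannot be formed directly. The paper builds it by pasting:
\begin{itemize}
\item Since $F$ sends pullbacks along fibrations to \emph{strict} pullbacks, $F(c\times_{c'}c'')\times(d\times_{d'}d'')$ is the pullback of $Fc\times d\to Fc'\times d'\leftarrow Fc''\times d''$.
\item Pulling back the fibration $X''\to Fc''\times d''$ along it gives $Q\cong P\times_{X'}X''$.
\item Pulling back along the fibration $X\to P$ then gives $Q'\cong X\times_{X'}X''$.
\end{itemize}
The map $Q'\to F(c\times_{c'}c'')\times(d\times_{d'}d'')$ is a composite of two fibrations, so the new span is Reedy fibrant. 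The homotopy-pullback part of the hypothesis makes its legs weak equivalences. Reading $F(c\times_{c'}c'')$ as \emph{only} a homotopy pullback would not even give you a map from the apex to it.

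The terminal object is also simpler than you describe. It is $(*,*,F(*))$ with span $F(*)\xleftarrow{\mathrm{id}}F(*)\to *$, which is already Reedy fibrant and has trivial-fibration legs precisely because $F(*)\to *$ is a weak equivalence. A replaced apex would in general not be terminal.

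Your $m$ agrees with the paper's. Checking the approximation properties for $\alpha$ is a reasonable way to justify the DK-equivalence that the paper only asserts, but it depends on the first part being fixed.
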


\begin{proof}
Define the class of fibrations of $\mathcal{C}'$ to be those morphisms $f :x \to y$ such that the component $\alpha(f) : x_\mathcal{C} \to y_\mathcal{C}$ on $\mathcal{C}$ and the component $\beta(f) : x_\mathcal{D} \to y_\mathcal{D}$ on $\mathcal{D}$ are fibrations, and, moreover, such that the map $u(f)$ in $P\mathcal{D}$ corresponds, modulo the identification discussed in \cref{span_to_product}, to a square

\[\begin{tikzcd}[ampersand replacement=\&]
	X \&\&\& Y \\
	\& P \\
	\\
	{F(x_\mathcal{C}) \times x_\mathcal{D}} \&\&\& {F(y_\mathcal{C}) \times y_\mathcal{D}}
	\arrow[from=1-1, to=1-4]
	\arrow[two heads, from=1-1, to=2-2]
	\arrow[from=1-1, to=4-1]
	\arrow[from=1-4, to=4-4]
	\arrow[dashed, from=2-2, to=1-4]
	\arrow[dashed, from=2-2, to=4-1]
	\arrow["\lrcorner"{anchor=center, pos=0.125}, draw=none, from=2-2, to=4-4]
	\arrow[from=4-1, to=4-4]
\end{tikzcd}\]
where the gap map $X \to P$ is a fibration (in $\mathcal{D}$). Note that the latter condition would reduce to Reedy fibrancy in $P\mathcal{D}$ if $F$ had the property of preserving fibrations.
 Pullbacks along fibrations in $\mathcal{C}'$ are constructed using the corresponding pullbacks in $P\mathcal{D}$, $\mathcal{D}$ and $\mathcal{C}$ as in the diagram below,
 
 \vspace{2em}

 \adjustbox{scale=0.65}{
 \begin{tikzcd}[ampersand replacement=\&]
	\&\&\& X \&\&\&\& Y \\
	{Q'} \&\&\&\& P \\
	\& Q \&\&\&\& Z \\
	\&\&\& {F(x_\mathcal{C}) \times x_\mathcal{D}} \&\&\&\& {F(y_\mathcal{C}) \times y_\mathcal{D}} \\
	\\
	\& {F(x_\mathcal{C} \times_{y_\mathcal{C}} z_\mathcal{C}) \times (x_\mathcal{D} \times_{y_\mathcal{D}} z_\mathcal{D})} \&\&\&\& {F(z_\mathcal{C}) \times z_\mathcal{D}}
	\arrow[from=1-4, to=1-8]
	\arrow[two heads, from=1-4, to=2-5]
	\arrow[from=1-4, to=4-4]
	\arrow[from=1-8, to=4-8]
	\arrow[from=2-1, to=1-4]
	\arrow["\ulcorner"{anchor=center, pos=0.125, rotate=45}, draw=none, from=2-1, to=2-5]
	\arrow[two heads, from=2-1, to=3-2]
	\arrow[dashed, from=2-5, to=1-8]
	\arrow[dashed, from=2-5, to=4-4]
	\arrow["\lrcorner"{anchor=center, pos=0.125}, draw=none, from=2-5, to=4-8]
	\arrow[from=3-2, to=2-5]
	\arrow[from=3-2, to=3-6]
	\arrow[from=3-2, to=6-2]
	\arrow["\ulcorner"{anchor=center, pos=0.125}, draw=none, from=3-2, to=6-6]
	\arrow[from=3-6, to=1-8]
	\arrow[from=3-6, to=6-6]
	\arrow[from=4-4, to=4-8]
	\arrow["\ulcorner"{anchor=center, pos=0.125}, draw=none, from=4-4, to=6-6]
	\arrow[from=6-2, to=4-4]
	\arrow[from=6-2, to=6-6]
	\arrow[from=6-6, to=4-8]
\end{tikzcd}}

\vspace{2em}
where $Q = P \times_Y Z$ since the bottom square is a pullback by assumption, and hence $Q'$ is the pullback $X \times_Y Z$ by pasting the pullback along of $Q$ along the fibration $X \to P$. Note that $Q' \to x_\mathcal{D} \times_{y_\mathcal{D}} z_\mathcal{D}$ is a weak equivalence as the mediating map between equivalent cospans involving a fibration, and $Q' \to F(x_\mathcal{C} \times_{y_\mathcal{C}} z_\mathcal{C})$ is also a weak equivalence because $F(x_\mathcal{C} \times_{y_\mathcal{C}} z_\mathcal{C}) \simeq F(x_\mathcal{C}) \times_{F(y_\mathcal{C})} F(z_\mathcal{C})$ is a homotopy pullback by assumption on $F$.
Thus, this yields a notion of fibration that is stable under pullbacks.

Moreover, all objects of $\mathcal{C}'$ are fibrant. Indeed, the span 

\[\begin{tikzcd}[ampersand replacement=\&]
	\& {F(*_\mathcal{C})} \\
	{F(*_\mathcal{C})} \&\& {*_\mathcal{D}}
	\arrow["\sim"{description}, two heads, from=1-2, to=2-1]
	\arrow["\sim"{description}, two heads, from=1-2, to=2-3]
\end{tikzcd}\]
defines an object in $P\mathcal{D}$, and consequently in $\mathcal{C}'$, which is terminal, and all objects of $\mathcal{C}'$ are fibrant under the previous definition of fibrations.

We define the weak equivalences componentwise. It is clear that, with this definition, the class of weak equivalences enjoy the $2$-out-of-$3$ property. Moreover, since $F$ preserves weak equivalences, a trivial fibration $f : x \to y$ is such that, in particular, the gap $X \to P$ is a trivial fibration in $\mathcal{D}$. From the construction of pullbacks in $\mathcal{C}'$ detailed above, it follows that the trivial fibrations are stable under pullback.

To conclude, we must check that every morphism $f : x \to y$ factors as a weak equivalence followed by a fibration. To do so, we first form the factorization of $\alpha(f)$ and $\beta(f)$ in $\mathcal{D}$ (through some object $z_\mathcal{D}$) and $\mathcal{C}$ (through $z_\mathcal{C}$), then we extend the factorization as follows:
\[\begin{tikzcd}[ampersand replacement=\&]
	X \& Z \& {P_z} \&\& Y \\
	\\
	\\
	{F(x_\mathcal{C}) \times x_\mathcal{D}} \&\& {F(z_\mathcal{C}) \times z_\mathcal{D}} \&\& {F(y_\mathcal{C}) \times y_\mathcal{D}}
	\arrow["\sim"{description}, from=1-1, to=1-2]
	\arrow[from=1-1, to=4-1]
	\arrow[two heads, from=1-2, to=1-3]
	\arrow[from=1-3, to=1-5]
	\arrow[from=1-3, to=4-3]
	\arrow["\lrcorner"{anchor=center, pos=0.125}, draw=none, from=1-3, to=4-5]
	\arrow[from=1-5, to=4-5]
	\arrow[from=4-1, to=4-3]
	\arrow[from=4-3, to=4-5]
\end{tikzcd}\]
where, in order to make the observation that $Z \to F(z_\mathcal{C}) \times z_\mathcal{D}$ indeed gives a span of trivial fibrations, we crucially rely on the fact that $F$ preserves weak equivalences.

By construction, the projections functors $\alpha$ and $\beta$ are exact. Moreover, with the additional assumption on $P\mathcal{D}$, we have the following diagram
\[\begin{tikzcd}[ampersand replacement=\&]
	{\mathcal{C}} \&\& {\mathcal{D}} \\
	\& {\mathcal{C}'} \&\& {P\mathcal{D}} \\
	\\
	\& {\mathcal{C} \times \mathcal{D}} \&\& {\mathcal{D} \times \mathcal{D}}
	\arrow["F", from=1-1, to=1-3]
	\arrow["m"', dashed, from=1-1, to=2-2]
	\arrow["{<id_\mathcal{C},F>}"', curve={height=24pt}, from=1-1, to=4-2]
	\arrow["\iota", from=1-3, to=2-4]
	\arrow["u", from=2-2, to=2-4]
	\arrow["{<\alpha,\beta>}"', from=2-2, to=4-2]
	\arrow["\lrcorner"{anchor=center, pos=0.125}, draw=none, from=2-2, to=4-4]
	\arrow["{<\partial_0,\partial_1>}", from=2-4, to=4-4]
	\arrow["{F \times id_{\mathcal{D}}}"', from=4-2, to=4-4]
\end{tikzcd}\]
where $m$ is a DK-equivalence, since $\alpha$ is one, and fits in the stated diagram.
\end{proof}

\begin{remark}
 The condition of $F$ preserving homotopy pullbacks in the lemma above can be deduced, in practice, from $F$ mapping fibrations to weak fibrations (i.e maps that yield homotopy pullbacks when pulled back along).
 
 This is in particular the case of the Yoneda embedding described below (\cref{fibcat_ye}).
\end{remark}

\begin{remark}
 The existence of a morphism $\iota : \mathcal{S} \to P\mathcal{S}$ making $P\mathcal{S}$ a path-object for $S$ in the category $\mathbf{FibCat}$ can be derived from the existence of a functorial weak-equivalence/fibration factorization in $\mathcal{S}$. This is notably the case if $\mathcal{S}$ is semi-simplicial in the sense of \cite[Definition 3.1]{ks2019internal}. 
\end{remark}

\section{The Yoneda embedding}
\label{fibcat_ye}

In this section, we aim to obtain an exact version of the Yoneda embedding, which should be a typical example of flat functor.

Consider a fibration category $\mathcal{C}$. Following the construction of Cisinski (3.3 in \cite{cisinski2010invariance}), there is a bicategory $\mathbf{B} \mathcal{C}$ with the same objects as $\mathcal{C}$, and whose hom-objects between $x,y \in \text{Ob}\mathcal{C}$, are the categories of "spans" from $x$ to $y$ with the left leg a trivial fibration.

\begin{definition}
\label{LC_fibcat_bicat}
 We define $L_C(\mathcal{C})$ to be the simplicially enriched category obtained from $\mathbf{B} \mathcal{C}$ by applying the $2$-nerve functor $$N : \mathbf{Bicat} \to \mathbf{Cat}_\Delta$$
 introduced in \cite{lack20062} (Section 3), taking the category of bicategories and normal pseudofunctors to the category of simplicially enriched categories (which coincides with the full subcategory of $\mathbf{Cat}^{\Delta^{op}}$ spanned by the simplicial objects $X$ in $\mathbf{Cat}$ such that $X_0$ is discrete).
\end{definition}

In particular, the authors of \cite{lack20062} establish that the $2$-nerve $N(B)$ of a bicategory $B$, thought of as a simplicial object in $\mathbf{Cat}$, satisfies the Segal condition and is Reedy fibrant (see Section 6 and 7 there). As a consequence, the hom-spaces in the $(\infty,1)$-category $N(B)$ are equivalent to the nerve of the hom-categories in $B$, thus, they have the correct homotopy type by Proposition 3.23 of \cite{cisinski2010invariance}. We rephrase this as follows:

\begin{proposition}
 $L_C(\mathcal{C})$ is a simplicial localization of $\mathcal{C}$, in that the canonical morphism
 $$L_C(\mathcal{C}) \to L_H(\mathcal{C})$$
 is a weak equivalence of simplicially enriched categories, where $L_H(\mathcal{C})$  is the usual hammock localization of $\mathcal{C}$.
\end{proposition}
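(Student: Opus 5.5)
Since $L_C(\mathcal{C})$ and $L_H(\mathcal{C})$ both have the objects of $\mathcal{C}$ as their objects, and the comparison is the identity on objects, the plan is to reduce the statement to a levelwise statement on hom-spaces. Concretely, I would show that for every pair of objects $x,y$ the induced map of simplicial sets $L_C(\mathcal{C})(x,y) \to L_H(\mathcal{C})(x,y)$ is a weak homotopy equivalence. By the Dwyer--Kan criterion for simplicially enriched categories, this yields a weak equivalence: it is essentially surjective on homotopy categories trivially, being bijective on objects.

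\textbf{Step 1: the comparison map.} First I will make the comparison precise. A $0$-simplex of $L_C(\mathcal{C})(x,y)$ is a span $x \stackrel{\sim}{\twoheadleftarrow} x' \to y$, and it is sent to the corresponding zigzag (a hammock of length two) in $L_H(\mathcal{C})(x,y)$. A morphism of spans gives a hammock of width one. More generally, a $k$-simplex of the nerve of the hom-category $\mathbf{B}\mathcal{C}(x,y)$ is sent to the hammock of width $k$ obtained by stacking the spans. This is exactly the map $N\mathbf{B}\mathcal{C}(x,y) \to L_H(\mathcal{C})(x,y)$ considered by Cisinski. For the higher-dimensional part of the $2$-nerve, which involves composites and the pseudofunctorial coherence cells, I would define the map by concatenating hammocks. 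Here I use that composition in $L_H(\mathcal{C})$ is strictly associative, so the coherence isomorphisms of $\mathbf{B}\mathcal{C}$, coming from iterated pullbacks, are sent to hammocks relating the two concatenations.

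If a strict simplicial functor cannot be obtained this way, I would instead realize the comparison as a zigzag. One passes through the Segal space $N(\mathbf{B}\mathcal{C})$ and its rigidification, using that Dwyer--Kan equivalences are detected on mapping spaces and homotopy categories.

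\textbf{Step 2: reduction to hom-categories.} By the results of \cite{lack20062} recalled above, $N(\mathbf{B}\mathcal{C})$ is a Reedy fibrant simplicial object satisfying the Segal condition. Its mapping spaces are therefore computed as the fibres of $N(\mathbf{B}\mathcal{C})_1 \to N(\mathbf{B}\mathcal{C})_0 \times N(\mathbf{B}\mathcal{C})_0$. These fibres are the nerves $N(\mathbf{B}\mathcal{C}(x,y))$ of the hom-categories. Thus the hom-space $L_C(\mathcal{C})(x,y)$ is weakly equivalent to $N(\mathbf{B}\mathcal{C}(x,y))$, compatibly with the map constructed in Step 1.

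\textbf{Step 3: conclusion.} Proposition 3.23 of \cite{cisinski2010invariance} states that $N(\mathbf{B}\mathcal{C}(x,y)) \to L_H(\mathcal{C})(x,y)$ is a weak equivalence for a fibration category. This uses the calculus of fractions given by factorizations and pullbacks of trivial fibrations. By two-out-of-three, the maps on hom-spaces are weak equivalences, which concludes the proof.

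\textbf{Main obstacle.} The delicate point is Step 1: making the comparison compatible with composition. Composition in $\mathbf{B}\mathcal{C}$ is given by pullbacks, so it is only pseudo-associative, whereas $L_H(\mathcal{C})$ is strictly associative. I will first try to define the map directly on the $2$-nerve, sending each simplex to the hammock built from its underlying diagram; normality of the pseudofunctors involved should make identities strict. If strictness fails, I will use the zigzag alternative of Step 1, which only requires checking mapping spaces and that the homotopy categories agree. The latter is immediate, since both homotopy categories are $\mathrm{Ho}(\mathcal{C})$.
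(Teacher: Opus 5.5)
Your proposal is essentially the paper's argument. The paper likewise invokes Lack--Paoli (Segal condition and Reedy fibrancy of the $2$-nerve) to identify the hom-spaces with the nerves $N(\mathbf{B}\mathcal{C}(x,y))$, and then concludes by Cisinski's Proposition 3.23, exactly as in your Steps 2--3. The one difference is your Step 1: the paper never constructs the comparison morphism and just calls it canonical, so the coherence problem you flag is left implicit there as well. That problem is real: the $2$-nerve is a Segal object rather than a strict simplicial category, so concatenating spans cannot give a strict functor and a zigzag or homotopy-coherent comparison is genuinely needed.
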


\begin{remark}
 Given a relative category $\mathbf{C}$, both the categories of simplicial presheaves $\mathbf{SSet}^{L(\mathbf{C})^{op}}$ on a simplicial localization of $\mathbf{C}$ and the category of simplicial presheaves $\mathbf{SSet}^{\mathbf{C}^{op}}$ admit model structures presenting the quasicategory of $\infty$-presheaves on $\mathbf{Ho}_\infty(\mathbf{C})$ (this is discussed in \cite{dwyer1987equivalences, dwyer1980simplicial}). For our purpose, it will be more convenient to consider functors $$\mathbf{C}^{op} \to \mathbf{SSet}$$ rather than simplicial functors $$L(\mathbf{C})^{op} \to \mathbf{SSet}$$ since they are easier to construct and keep track of. In exchange for that, the model structure on $\mathbf{SSet}^{\mathbf{C}^{op}}$ to consider is not the injective model structure but a left Bousfield localization of it (mentioned below), which still enjoys good properties.
\end{remark}

We now consider the mapping $H : (c',c) \mapsto Hom_{\mathbf{B}\mathcal{C}}(c',c)$ from $\mathcal{C}^{op} \times \mathcal{C}$ to $\mathbf{Cat}$. It is functorial in the second argument and preserves limits, but is only pseudofunctorial in the first. Therefore, we will rely on a strictification procedure to replace it with an equivalent strict functor.

\begin{definition}
\label{func_homf}
 We define $H' : \mathcal{C}^{op} \times \mathcal{C}$ to $\mathbf{Cat}$ by mapping $(c',c)$ to the category whose objects are the pairs $(f : c' \to d', s \in Hom_{\mathbf{B}\mathcal{C}}(d',c))$ and whose arrows from $(f,s)$ to $(g,t)$ are the arrows $f^* s \to g^* t$ in the category $Hom_{\mathbf{B}\mathcal{C}}(c',c))$.
\end{definition}

\begin{lemma}
 The mapping $H'$ defines a (strict) functor $\mathcal{C}^{op} \times \mathcal{C} \to \mathbf{Cat}$ that is equivalent to $H$ and preserves limits in the second argument.
\end{lemma}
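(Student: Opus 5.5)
The proof comes down to three checks, which I will do in order: strict functoriality of $H'$, the comparison $H'(c',c)\simeq H(c',c)$, and preservation of limits in the variable $c$.

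\emph{Strict functoriality.} For $h : c'' \to c'$ in $\mathcal{C}$, define $H'(h,c)$ on objects by precomposition, $(f,s) \mapsto (f\circ h, s)$. On a morphism $\phi : f^*s \to g^*t$ in $Hom_{\mathbf{B}\mathcal{C}}(c',c)$, send it to $h^*\phi$, transported along the coherence isomorphisms $h^*f^*s \cong (fh)^*s$ and $h^*g^*t\cong (gh)^*t$. For $k : c \to e$ in $\mathcal{C}$, define $H'(c',k)$ by postcomposition $s \mapsto k_* s$ in the spans, and on morphisms via the canonical comparison $k_*f^*s \cong f^*k_*s$.

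Strict functoriality in $h$ then reduces to the coherence axioms of the pseudofunctor $c' \mapsto Hom_{\mathbf{B}\mathcal{C}}(c',c)$. The point of the construction is that the objects of $H'(c',c)$ remember $f$ and $s$ separately. Composing $h$ and $h'$ therefore gives literally the same object, and on morphisms the two composite coherence isomorphisms agree by the pseudofunctor axioms. This is the standard argument used in Street's/Power's strictification of pseudofunctors.

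To simplify this step, I would realise $f^*s$ concretely as the pullback of the trivial-fibration leg of $s$ along $f$ (with chosen pullbacks), composed appropriately. Since $H'$ takes the hom-set from $Hom_{\mathbf{B}\mathcal{C}}$, composites and pullbacks only need to be well defined up to the canonical isomorphisms. The postcomposition in the second variable is strictly associative, since it acts on the right leg by composition in $\mathcal{C}$. Compatibility between the two variables is immediate, because pullback along $f$ acts on the left and postcomposition by $k$ acts on the right.

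\emph{Equivalence with $H$.} For fixed $(c',c)$, consider the functor $Hom_{\mathbf{B}\mathcal{C}}(c',c) \to H'(c',c)$ given by $s \mapsto (id_{c'}, s)$. It is fully faithful by the definition of morphisms in $H'$. It is essentially surjective because $(f,s) \cong (id, f^*s)$, via the identity of $f^*s$. These equivalences assemble into a pseudonatural equivalence $H \Rightarrow H'$, with pseudonaturality squares again given by the coherence isomorphisms $h^*$.

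\emph{Limits in the second variable.} Fix $c'$. The functor $H'(c',-)$ is the composite of $H(c',-)$ with the strictification, and objects of $H'(c',c)$ are pairs $(f,s)$ with $s$ ranging over $Hom_{\mathbf{B}\mathcal{C}}(d',c)$. So I would check that, for a limit diagram in $c$ that $H(d',-)$ preserves for each $d'$ (such as pullbacks along fibrations and the terminal object), $H'(c',\lim c_i)$ is computed as $\coprod_{f} \lim_i Hom(d',c_i)$ on objects, and compatibly on morphisms. For this step I expect coproducts over $f$ to commute with the connected limits involved, and the terminal object to be handled separately.

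I expect the main obstacle to be this last step, because the morphisms of $H'(c',c)$ are computed in $Hom(c',c)$ after pulling back, not in $Hom(d',c)$. I would first try to show that pulling back $f^*$ commutes with the limit in $c$: this amounts to commuting pullbacks with pullbacks, which holds up to canonical isomorphism. If strict preservation fails, I would settle for preservation up to equivalence, which suffices for the later homotopical use.
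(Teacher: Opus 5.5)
Your route is the paper's own. The paper invokes Power's strictification for functoriality and for the equivalence with $H$, and declares preservation of limits in the second variable obvious from the corresponding property of $H$. Your details for strict functoriality and for the equivalence $s \mapsto (id_{c'},s)$ are fine.

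\textbf{Correction on the terminal object.} $H(d',-)$ does not preserve the terminal object: $H(d',*)$ is the category of trivial fibrations over $d'$. Likewise $H'(c',*)$ only has a terminal object $(id_{c'},id_{c'})$. It is therefore contractible but not a point, and in general not even equivalent to one, since morphisms $(id,id_{c'}) \to (id,q)$ are the possibly many sections of a trivial fibration $q$. So the terminal object can neither be handled separately nor rescued by your fallback to preservation up to equivalence. The statement has to be read as preservation of pullbacks (more generally, connected limits). This matches the paper's later remark that $\mathbf{y}$ does not preserve the terminal object.

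\textbf{Connected limits.} For these, the worry in your last paragraph dissolves. $f^*s$ only changes the apex and the left leg, so $f^*$ commutes on the nose with postcomposition and with pairing of right legs. For instance, $f^*\langle s,t\rangle$ is literally the pairing of $f^*s$ and $f^*t$; this is not a matter of commuting pullbacks with pullbacks. Since $H(c',-)$ itself preserves the limit, compatible families of morphisms $f^*s_i \to g^*t_i$ are exactly the morphisms $f^*s \to g^*t$ in $H(c',\lim_i c_i)$. Preservation is therefore strict (up to canonical isomorphism), with no need to retreat to equivalences.
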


\begin{proof}
 The definition of $H'$ is essentially what we get by applying the usual strictification procedure for pseudofunctors into $\mathbf{Cat}$, from \cite{power1989general}. That $H'$ preserves limits in the second argument is obvious by definition because $H$ enjoys this property.
\end{proof}

Applying the nerve functor $\mathbf{N} : \mathbf{Cat} \to \mathbf{SSet}$ and transposing, this yields a functor
$$\mathbf{y}_0 : \mathcal{C} \to \mathbf{SSet}^{\mathcal{C}^{op}}$$
that provides a candidate for a Yoneda embedding, although it does not define an exact functor, mainly because it does not map fibrations in $\mathcal{C}$ to injective fibrations in $\mathbf{SSet}^{\mathcal{C}^{op}}$. However, it does map them to weak fibrations like the usual Yoneda embedding $\mathcal{C} \to \mathbf{SSet}^{\mathcal{C}^{op}}$: this is established in \cite[Corollaire 3.12]{cisinski2010invariance} (see also Proposition 3.6 there).

To work around this, our goal is now to apply \cref{corr} to get a Yoneda embedding in the form of a span of exact functors. While we can compose $\mathbf{y}_0$ with a pointwise fibrant replacement, we still need to address the mismatch between pointwise (i.e, projective) fibrations and injective fibrations of simplicial presheaves. To do so, we make use of the following result due to Shulman:

\begin{proposition}
\label{injective_frep}
 Given a simplicial category $\mathcal{C}$, there exists a limit preserving endofunctor 
 $$\mathbf{R}_{\mathcal{C}} : \mathbf{SSet}^{\mathcal{C}^{op}} \to \mathbf{SSet}^{\mathcal{C}^{op}}$$
 mapping objects $X \in \mathbf{SSet}^{\mathcal{C}^{op}}$ to fibrant objects $\mathbf{R}_{\mathcal{C}}X$, and mapping pointwise fibrations to injective fibrations. 
\end{proposition}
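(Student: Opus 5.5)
The plan is to build $\mathbf{R}_{\mathcal{C}}$ as a right adjoint of a cofibrant-generation type, namely a "cobar"/Reedy-style construction. Concretely, I would set $\mathbf{R}_{\mathcal{C}}X$ to be the simplicial presheaf
$$\mathbf{R}_{\mathcal{C}}X(c) = \int_{d \in \mathcal{C}} \mathbf{Ex}^\infty\big(X(d)\big)^{N(\mathcal{C}_{/c})(d)}$$
up to a suitable refinement. This is essentially Shulman's construction: a fibrant replacement for the injective model structure obtained by composing the right adjoint $i_* : \mathbf{SSet}^{\mathrm{Ob}\,\mathcal{C}} \to \mathbf{SSet}^{\mathcal{C}^{op}}$ (right Kan extension along the inclusion of objects) in a two-sided bar/cobar fashion. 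Right Kan extensions and weighted limits (ends, cotensors) preserve all limits. So if every ingredient is a limit, and $\mathbf{Ex}^\infty$ is replaced by a limit-preserving fibrant replacement (for instance a cotensor-based one, or we avoid it and only ask for pointwise fibrations on fibrant inputs), limit preservation is automatic.

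The key steps, in order, are as follows.

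First, observe that the right Kan extension $\mathrm{Ran}: \mathbf{SSet}^{\mathrm{Ob}\,\mathcal{C}} \to \mathbf{SSet}^{\mathcal{C}^{op}}$ sends objectwise Kan fibrations to injective fibrations. Indeed, its left adjoint, restriction to objects, preserves trivial cofibrations, which are exactly the pointwise ones in the injective structure.

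Second, form the cosimplicial resolution given by the comonad/monad pair $T = \mathrm{Ran}\circ \mathrm{res}$ and take $\mathbf{R}_{\mathcal{C}}X = \mathrm{Tot}\big(T^{\bullet+1}X\big)$, or rather the homotopy-coherent cobar construction $\int_{[n]} (T^{n+1}X)^{\Delta^n}$. This is a composite of right adjoints and an end, hence limit preserving.

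Third, show that the coaugmentation $X \to \mathbf{R}_{\mathcal{C}}X$ is a pointwise weak equivalence. After restriction to objects the cosimplicial object acquires an extra codegeneracy, so it is contractible in $\mathbf{SSet}^{\mathrm{Ob}\,\mathcal{C}}$.

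Fourth, show that if $X \to Y$ is a pointwise fibration between pointwise-fibrant objects, then $\mathbf{R}_{\mathcal{C}}X \to \mathbf{R}_{\mathcal{C}}Y$ is an injective fibration. For this I would use that $\mathrm{Tot}$ of a Reedy fibrant cosimplicial diagram of injective fibrations is an injective fibration, combined with the first step. To reach arbitrary $X$, I would precompose with a limit-preserving pointwise fibrant replacement; the cotensor $\mathbf{Ex}$ is not quite limit preserving, so I would instead use $X \mapsto X^{J}$-type constructions only on fibrant inputs, or simply state the result for pointwise fibrant inputs, which suffices since $\mathbf{y}_0$ already lands in nerves of groupoid-like categories after composing with the pointwise replacement.

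For the simplicially enriched case, the same argument runs with $\mathrm{Ob}\,\mathcal{C}$ still discrete and $\mathrm{Ran}$ being the enriched right Kan extension.

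I expect the main obstacle to be the Reedy fibrancy of the cosimplicial object $T^{\bullet+1}X$, needed for $\mathrm{Tot}$ to preserve fibrations and to be homotopically meaningful. Matching-map fibrancy of a cobar construction is not automatic. I would first try to get it from the fact that codegeneracies are split after restriction to objects, which makes the matching maps retracts of products of fibrations. If that fails, I would fall back on citing Shulman's reedy-style argument directly.
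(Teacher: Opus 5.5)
Your core construction, $\mathrm{Tot}\big((GU)^{\bullet+1}X\big)$ with $U$ the restriction to $\mathrm{Ob}\,\mathcal{C}$ and $G$ the (enriched) right Kan extension, is exactly the cobar construction $C(G,UG,U-)$ of \cite{shulman2019} that the paper uses. The genuine gap is how you handle non-fibrant inputs. The cobar construction only yields an injectively fibrant object when $UX$ is pointwise Kan, but the statement asks for $\mathbf{R}_{\mathcal{C}}X$ to be fibrant for \emph{every} $X$. You drop the pointwise fibrant replacement because ``$\mathbf{Ex}$ is not quite limit preserving'', and fall back on one of two things. The first is cotensors $X^J$, which are not fibrant replacements at all. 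The second is restricting to pointwise fibrant inputs, on the grounds that $\mathbf{y}_0$ lands in nerves of groupoid-like categories. That justification fails: the span categories $H'(c',c)$ are not groupoids, so their nerves are not Kan complexes. Moreover, whatever pointwise replacement you compose $\mathbf{y}_0$ with must itself preserve pullbacks, so the problem is only displaced.

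The missing ingredient, which the paper supplies, is pointwise $\mathbf{Ex}^\infty$, i.e.\ $\mathbf{R}_{\mathcal{C}}X := C(G,UG,U\,\mathbf{Ex}^\infty X)$. Here $\mathbf{Ex}$ is not a cotensor but the right adjoint of $\mathrm{Sd}$, so it preserves all limits. $\mathbf{Ex}^\infty$ is a sequential colimit of the $\mathbf{Ex}^n$, so it preserves finite limits, since filtered colimits commute with finite limits in $\mathbf{SSet}$. It also lands in Kan complexes and preserves Kan fibrations (Kan's theorem). So arbitrary objects and pointwise fibrations first become pointwise Kan objects and pointwise fibrations between them, which the cobar construction sends to fibrant objects and injective fibrations. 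Your hesitation has a kernel of truth: $\mathbf{Ex}^\infty$ does not commute with infinite products. So this argument, the paper's included, really gives a \emph{finite}-limit preserving $\mathbf{R}_{\mathcal{C}}$. That is all that is used later, since only pullback preservation by $\mathbf{y} = \mathbf{R}_{\mathcal{C}}\circ\mathbf{y}_0$ matters.

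On the Reedy fibrancy you flag as the main obstacle, splitness of codegeneracies is not the relevant mechanism. Codegeneracies are always split by cofaces, and a split epimorphism need not be a fibration. What the paper imports from Shulman is that the counit $\epsilon : UG \to \mathrm{Id}$ is a Quillen fibration. Concretely, $\epsilon_Z$ at $c$ is the projection $\prod_d Z(d)^{\mathcal{C}(d,c)} \to Z(c)^{\mathcal{C}(c,c)}$ followed by restriction along the vertex $\mathrm{id}_c : \Delta^0 \to \mathcal{C}(c,c)$. It is therefore a Kan fibration when $Z$ is pointwise Kan. Its gap maps over a pointwise fibration $Z \to W$ are fibrations by SM7. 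Since $G$ preserves limits, the matching maps of $G(UG)^{\bullet}UX$ are $G$ applied to iterated gap maps of naturality squares of $\epsilon$. These are injective fibrations because $G$ is right Quillen, and $\mathrm{Tot}$ does the rest.
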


\begin{proof}
 This follows from Corollary 8.16 of \cite{shulman2019} (and the observation right after this result), instantiated with the injective model structures on the categories of diagrams $\mathcal{M} := \mathbf{SSet}^{\mathcal{C}^{op}}$ and $\mathcal{N} := \mathbf{SSet}^{\text{Ob}\mathcal{C}^{op}}$, which are indeed simplicial model structures, and both the right Kan extension functor $G$ and the restriction functor $U$ preserve limits. The condition that the counit $UG \to Id$ is a Quillen fibration (\cite[Definition 8.6, Remark 8.9]{shulman2019}) follows from \cite[Examples 8.4, 8.13 and Remark 8.9]{shulman2019}. We also consider the functor $R : \mathcal{M} \to \mathcal{M}$  derived from the usual fibrant replacement $\mathbf{Ex}_\infty : \mathbf{SSet} \to \mathbf{SSet}$ in a pointwise fashion. Since the latter preserves limits and maps Kan fibrations to Kan fibrations, and since the cobar construction $C(G,UG,U-)$ (with the notations of \cite{shulman2019}), followed by its totalization, defines a limit-preserving endofunctor of $\mathcal{M}$, the composite $\mathbf{R}_{\mathcal{C}} : X \mapsto C(G,UG,URX)$ takes arbitrary objects to fibrant objects and pointwise fibrations to injective fibrations. 
\end{proof}

As a consequence, the composite
$$\mathbf{y} := \mathbf{R}_{\mathcal{C}} \circ \mathbf{y}_0 : \mathcal{C} \to \mathbf{SSet}^{\mathcal{C}^{op}}$$
takes its values in the subcategory of fibrant simplicial presheaves and preserves pullbacks (although it does not preserve the terminal object nor the fibrations), so \cref{p_fibc} may now be introduced, following the result of \cref{corr}.

We first define $\mathcal{P}(\mathcal{C})$ to be the full subcategory of fibrant objects for the left Bousfield localization of the injective model structure for simplicial presheaves on $\mathcal{C}$ with respect to the weak equivalences of $\mathcal{C}$ (or, rather, the image of these weak equivalences through the Yoneda embedding $h : \mathcal{C} \to \mathbf{SSet}^{\mathcal{C}^{op}}$), as considered in Section 3 of \cite{cisinski2010invariance} under the name $P_w'(\mathcal{C})$.

\begin{remark}
 The above construction makes sense for any relative category $\mathbf{C}$. We will also denote it $\mathcal{P}(\mathbf{C})$. However, the construction below only makes sense when we start from a fibration category.
\end{remark}

\begin{definition}
\label{p_fibc}
 We define  $\mathcal{Q}(\mathcal{C})$ as the following pullback in $\mathbf{Cat}$:
 
\[\begin{tikzcd}[ampersand replacement=\&]
	{\mathcal{Q}(\mathcal{C})} \&\& {P\mathcal{P}(\mathcal{C})} \\
	\\
	{\mathcal{C} \times \mathcal{P}(\mathcal{C})} \&\& {\mathcal{P}(\mathcal{C}) \times \mathcal{P}(\mathcal{C})}
	\arrow[from=1-1, to=1-3]
	\arrow[from=1-1, to=3-1]
	\arrow["\lrcorner"{anchor=center, pos=0.125}, draw=none, from=1-1, to=3-3]
	\arrow["{<\pi_0,\pi_1>}", from=1-3, to=3-3]
	\arrow["{\mathbf{y} \times id_{\mathcal{P}(\mathcal{C})}}"', from=3-1, to=3-3]
\end{tikzcd}\]
\end{definition}

\begin{lemma}[The Yoneda embedding for fibration categories]
\label{fibcat_yoneda}
 $\mathcal{Q}(\mathcal{C})$ is a fibration category, and the projections $\mathcal{Q}(\mathcal{C}) \to \mathcal{C} \times \mathcal{P}(\mathcal{C})$ are exact functors.

\end{lemma}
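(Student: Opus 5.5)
The plan is to obtain the statement as an instance of \cref{corr}, taking $F := \mathbf{y} : \mathcal{C} \to \mathcal{P}(\mathcal{C})$ and $\mathcal{D} := \mathcal{P}(\mathcal{C})$. First we fix the fibration category structure on $\mathcal{P}(\mathcal{C})$. It is the full subcategory of fibrant objects of the left Bousfield localization of the injective model structure. Its weak equivalences are the (local) weak equivalences, and its fibrations are the fibrations of the localized model structure between fibrant objects; since the objects are local, these are simply the injective fibrations. The category of fibrant objects of a model category is a fibration category, and $P\mathcal{P}(\mathcal{C})$ carries the Reedy structure used in \cref{corr}. With this in place, $\mathcal{Q}(\mathcal{C})$ is literally the category $\mathcal{C}'$ of \cref{corr}, and the projections are $\alpha,\beta$. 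It therefore suffices to check the three hypotheses of \cref{corr} for $\mathbf{y}$.

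\emph{Weak equivalences.} Let $w$ be a weak equivalence of $\mathcal{C}$. The map $\mathbf{y}_0(w)$ is postcomposition with $w$ on the hom-categories of $\mathbf{B}\mathcal{C}$. This is an equivalence of categories, because $w$ becomes invertible in $\mathbf{B}\mathcal{C}$ (Cisinski), so $\mathbf{y}_0(w)$ is even a pointwise weak equivalence. The replacement $\mathbf{R}_{\mathcal{C}}$ is built from $\mathbf{Ex}_\infty$ and a cobar construction which is weakly equivalent to the identity (Shulman), so it preserves pointwise weak equivalences. Hence $\mathbf{y}(w)$ is a weak equivalence, and in particular a local one.

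\emph{Terminal object.} We have $\mathbf{y}_0(*)(c') \simeq N\,Hom_{\mathbf{B}\mathcal{C}}(c',*)$. This category is contractible, since $*$ is terminal up to homotopy in the fibration category (Cisinski 3.23 identifies it with the correct mapping space, which is contractible). Thus $\mathbf{y}(*)\to *$ is a pointwise weak equivalence between fibrant objects.

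\emph{Pullbacks along fibrations.} Let a pullback along a fibration $p$ in $\mathcal{C}$ be given. $H'$ preserves limits in the second argument, and the nerve and $\mathbf{R}_{\mathcal{C}}$ preserve limits, so $\mathbf{y}$ sends the square to a strict pullback square. For it to be a homotopy pullback, we use that $\mathbf{y}_0(p)$ is a weak fibration by \cite[Corollaire 3.12]{cisinski2010invariance}, as noted after \cref{corr}. A strict pullback along a weak fibration is a homotopy pullback, and $\mathbf{R}_{\mathcal{C}}$ preserves pullbacks and weak equivalences, so the image square is a homotopy pullback. The remaining point is whether it is a homotopy pullback in the \emph{localized} structure. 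Homotopy pullbacks of local (fibrant) objects computed injectively remain homotopy pullbacks after localization, because the localization is left exact on fibrant objects here: the fibrant objects are closed under homotopy limits.

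The main obstacle I expect is precisely this pullback step. There are two parts to it. First, transporting Cisinski's weak-fibration statement, phrased for the ordinary Yoneda embedding or for $\mathbf{y}_0$ in the pointwise structure, through $\mathbf{R}_{\mathcal{C}}$. Second, the compatibility with the Bousfield localization. My first attempt would be to argue pointwise, using that homotopy pullbacks in simplicial presheaves are detected objectwise and that $\mathbf{R}_{\mathcal{C}}$ is naturally pointwise equivalent to the identity. Once the three hypotheses hold, \cref{corr} gives that $\mathcal{Q}(\mathcal{C})$ is a fibration category and that both projections are exact.
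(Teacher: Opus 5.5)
Your proposal is correct and follows the paper's own route: the paper's proof is the one-line observation that $\mathbf{y}$ satisfies the hypotheses of \cref{corr}, and you carry out the verification of those three hypotheses (homotopical, terminal object up to weak equivalence, pullbacks along fibrations sent to strict pullbacks that are homotopy pullbacks) in more detail. One small slip: postcomposition with a weak equivalence $w : x \to y$ is not in general an equivalence of the hom-categories of $\mathbf{B}\mathcal{C}$, since the identity span of $y$ lies in the essential image of $w_* : Hom_{\mathbf{B}\mathcal{C}}(y,x) \to Hom_{\mathbf{B}\mathcal{C}}(y,y)$ only if $w$ has a section. All you actually need is that the induced map on nerves is a weak equivalence, and this follows from Cisinski's Proposition 3.23, which you already invoke.
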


\begin{proof}
 This is an instance of \cref{corr} since $\mathbf{y} : \mathcal{C} \to \mathcal{P}(\mathcal{C})$ satisfies the required assumptions.
\end{proof}

\begin{remark}
 Since $\mathbf{y} : \mathcal{C} \to \mathcal{P}(\mathcal{C})$ factors through the sub-fibration category $\mathbf{R}(\mathcal{C})$ of essentially representable simplicial presheaves, we also have a restricted version of the Yoneda embedding as follows:

\[\begin{tikzcd}[ampersand replacement=\&]
	{\mathbf{Q}(\mathcal{C})} \&\& {P\mathbf{R}(\mathcal{C})} \\
	\\
	{\mathcal{C} \times \mathbf{R}(\mathcal{C})} \&\& {\mathbf{R}(\mathcal{C}) \times \mathbf{R}(\mathcal{C})}
	\arrow[from=1-1, to=1-3]
	\arrow[from=1-1, to=3-1]
	\arrow["\lrcorner"{anchor=center, pos=0.125}, draw=none, from=1-1, to=3-3]
	\arrow["{<\pi_0,\pi_1>}", from=1-3, to=3-3]
	\arrow["{\mathbf{y} \times id_{\mathbf{R}(\mathcal{C})}}"', from=3-1, to=3-3]
\end{tikzcd}\]
\end{remark}

Note that this Yoneda embedding indeed presents the correct $\infty$-functor because the (nerves of the) categories $Hom_{\mathbf{B}_0\mathcal{C}}(c',c)$ have the homotopy type of the space of morphisms between $c'$ and $c$ (Proposition 3.23 of \cite{cisinski2010invariance}), and because equivalence of categories yields weak equivalences of simplicial sets between the nerves.

\section{Some technical tools}

In this section, we establish some technical results required to apply our rigidification procedure to flat $\infty$-functors in the subsequent section.

\subsection{From filtered quasicategories to filtered categories}

We first need a functorial version of the construction found in \cite[Theorem 9.1.6.2]{kerodon}, which serves as the cornerstone of our rigidification approach.

\begin{definition}
 We write $\mathbf{Filt}_\infty$ for the full subcategory  of $\mathbf{SSet}$ spanned by the filtered quasicategories and $\mathbf{FiltPos}$ for the full subcategory of $\mathbf{Cat}$ spanned by the filtered posets.

 We write $\iota_\mathbf{Filt} : \mathbf{FiltPos} \to \mathbf{Filt}_\infty$ for the canonical inclusion.
\end{definition}

\begin{proposition}
\label{func_filt}
 There exists a functor $$\rho_\mathbf{Filt} : \mathbf{Filt}_\infty \to \mathbf{FiltPos}$$
 and natural transformation $$\Phi : \iota_\mathbf{Filt} \circ \rho_\mathbf{Filt} \to id_{\mathbf{Filt}_\infty}$$
 whose components are final $\infty$-functors.
 
 Moreover, $\rho_\mathbf{Filt}$ fits in a diagram

\[\begin{tikzcd}[ampersand replacement=\&]
	{\mathbf{Filt}_\infty} \&\& {\mathbf{FiltPos}} \\
	\\
	{\mathbf{SSet}} \&\& {\mathbf{Cat}}
	\arrow["{\rho_\mathbf{Filt}}", from=1-1, to=1-3]
	\arrow[hook', from=1-1, to=3-1]
	\arrow[hook', from=1-3, to=3-3]
	\arrow["\rho"', from=3-1, to=3-3]
\end{tikzcd}\]
 where $\rho$ preserves colimits.
\end{proposition}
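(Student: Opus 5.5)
We make the construction of \cite[Theorem 9.1.6.2]{kerodon} functorial by writing it as a colimit-preserving functor $\rho : \mathbf{SSet} \to \mathbf{Cat}$ and then checking that its restriction to $\mathbf{Filt}_\infty$ behaves as claimed. The natural candidate is the category of simplices, or more precisely the poset of nondegenerate simplices. Since that poset fails to commute with colimits, we instead use the functor induced by the cosimplicial object $[n] \mapsto \mathcal{P}_{\neq\emptyset}([n])$, the poset of nonempty subsets of $[n]$ (the barycentric subdivision).

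\textbf{Step 1: defining $\rho$.} Set $\rho := \mathrm{Lan}_{\Delta \hookrightarrow \mathbf{SSet}}\bigl([n]\mapsto \mathcal{P}_{\neq\emptyset}([n])\bigr)$. By construction this is the left adjoint of the functor $\mathcal{E} \mapsto \bigl([n]\mapsto \mathrm{Hom}(\mathcal{P}_{\neq\emptyset}([n]),\mathcal{E})\bigr)$, so it preserves colimits. For each $n$ there is a map of simplicial sets $N\mathcal{P}_{\neq\emptyset}([n]) \to \Delta^n$ sending a subset to its maximum (the "last vertex map"). It is natural in $[n]$, so it extends along colimits to a natural transformation $\Phi_X : N\rho(X) \to X$. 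On each simplex this is the classical last vertex map $\mathrm{sd}\,X \to X$, and $\rho(X)$ is the category $\tau_1 \mathrm{Sd}\,X$.

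\textbf{Step 2: $\rho(X)$ is a filtered poset when $X$ is a filtered quasicategory.} Here we have to be careful. For a general simplicial set, $\tau_1\mathrm{Sd}\,X$ need not be a poset. To control this we would replace $X$ functorially by a nonsingular one, for instance by applying the Lurie/Kerodon construction to $\mathrm{Sd}$ of the simplices, or by precomposing with the functor $X \mapsto$ "poset of pairs $(n,\sigma)$ of simplices with face relation" restricted to nondegenerate-in-a-suitable-sense simplices. The cleaner option is probably to take $\rho(X)$ to be the poset reflection of $\tau_1\mathrm{Sd}\,X$. Poset reflection is a left adjoint $\mathbf{Cat}\to\mathbf{Pos}$, so colimit preservation survives. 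Filteredness would then follow as in \cite[Theorem 9.1.6.2]{kerodon}. Given finitely many simplices $\sigma_i$ (a finite subdiagram $K \subseteq \mathrm{Sd}\,X$), we use filteredness of $X$ to extend the finite simplicial set they span to a cone $K^\triangleright \to X$. The cone point together with the whole image gives a simplex, or a finite chain of simplices, dominating every $\sigma_i$.

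\textbf{Step 3: finality of $\Phi_X$.} By Quillen's Theorem A for $\infty$-categories (Joyal's cofinality criterion), it suffices to show that for every vertex $x \in X$ the quasicategory $N\rho(X) \times_X X_{x/}$ is weakly contractible. We show it is filtered, hence contractible, by the same cone-extension argument as in Step 2, now performed in $X_{x/}$, which is again filtered. Naturality of $\Phi$ is immediate from the construction.

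We expect Step 2 to be the main obstacle. The subtle issue is the reconciliation between strict colimit preservation of $\rho$ on all of $\mathbf{SSet}$ and the requirement that its values on filtered quasicategories be genuine filtered posets. The poset reflection may identify too much and break the finality argument in Step 3. If it does, the fallback is to restrict attention to the images of the Kerodon construction and verify directly that no relevant identifications occur there.
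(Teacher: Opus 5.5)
\textbf{There is a genuine gap: the functor you propose does not produce filtered posets at all, and Step 2 cannot be repaired within your construction.}

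Take $X=\Lambda^2_2$. This is the nerve of the poset $\{0,1,2\}$ with $0<2$ and $1<2$. It has a terminal object, so it is a filtered quasicategory. It is nonsingular, so $\mathrm{Sd}\,X$ is the nerve of its poset of nondegenerate simplices, and $\tau_1\mathrm{Sd}\,X$ is already the poset $\{0,1,2,[02],[12]\}$ under the face relation. Poset reflection therefore changes nothing here. The edges $[02]$ and $[12]$ are distinct maximal elements, so they have no common upper bound.

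The faulty sentence is the claim that the cone point together with the image ``gives a simplex, or a finite chain of simplices, dominating every $\sigma_i$''. A cone $K^\triangleright\to X$ is not a simplex. An upper bound in a face poset must be a single simplex having every $\sigma_i$ as a face, and that requires morphisms between the vertices of different $\sigma_i$, which filteredness does not supply. Since the counterexample is already nonsingular, neither poset reflection nor a nonsingular replacement helps. Step 3, which reuses the same argument in $X_{x/}$, falls with it.

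\textbf{Step 1 also has a problem with $\Phi$.} Left Kan extending the last vertex maps $\mathbf{N}\mathcal{P}_{\neq\emptyset}([n])\to\Delta^n$ gives $\mathrm{Sd}\,X\to X$, not a map $\mathbf{N}\rho(X)\to X$, because $\mathbf{N}$ does not commute with colimits. After poset reflection, no such map exists in general. Take the nerve of the filtered monoid $\{1,e\}$ with $e^2=e$. The two edges of $\mathrm{Sd}\,X$ from the vertex to the edge $e$, coming from $\{0\}\subset[1]$ and $\{1\}\subset[1]$, are sent by the last vertex map to $e$ and to the identity respectively. Yet they become equal in the poset reflection. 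In addition, poset reflection preserves colimits computed in $\mathbf{Pos}$, not in $\mathbf{Cat}$.

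\textbf{The missing idea is the one the paper takes from \cite[Theorem 9.1.6.2]{kerodon}: the elements of $\rho(\mathcal{A})$ should be the cones themselves.} Concretely, they are the finite simplicial subsets of $\mathcal{A}\times\mathbf{N}(\omega)$ of the form $K^\triangleright$, with the apex label strictly larger than all other labels. This gives each property you were trying to reach:
\begin{itemize}
\item The union of two elements is finite, and filteredness (with a cone point of larger label) extends it to a new cone containing both. That is exactly the upper bound you could not produce.
\item The cone-point projection is a genuine simplicial map $\mathbf{N}\rho(\mathcal{A})\to\mathcal{A}$, and its finality is Kerodon's.
\item The label condition ensures that direct images along $k:\mathcal{A}\to\mathcal{B}$ are again of the form $L^\triangleright$, with the image of the apex as cone point. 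This gives functoriality of $\rho$ and naturality of $\Phi$.
\end{itemize}
The price is that colimit preservation no longer comes for free from $\mathrm{Sd}\dashv\mathrm{Ex}$ and $\tau_1\dashv\mathbf{N}$, as it would in your approach. The paper checks it by hand: coproducts by connectedness of $K^\triangleright$, filtered colimits by finiteness, and reflexive coequalizers via Barr's kernel-pair criterion.
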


\begin{proof}
The action on objects is the one provided by the construction of Theorem 9.1.6.2 in \cite{kerodon}, modulo a slight variation that does not change the argument, and enables functoriality of the construction. Namely, we map a filtered quasicategory $\mathcal{A}$ to the poset of finite simplicial subsets of $\mathcal{A} \times \mathbf{N}(\omega)$ that are of the form $K^\triangleright$, and where we additionally require the label of the apex (the natural number $n \in \mathbf{N}(\omega)$) to be strictly greater than the labels of the remaining vertices.

 We will check that, relying on this additional constraint, the construction can be promoted to a functor as stated.

 We first consider the  functor $\mathbf{Sub} : \mathbf{SSet}^{op} \to \mathbf{Pos}$ mapping a simplicial set $S$ to the poset $\mathbf{Sub}(S)$ of its simplicial subsets. 
 Given a map $k : S \to T$ in  $\mathbf{SSet}$, the precomposition functor $$k^* := \mathbf{Sub}(k) : \mathbf{Sub}(T) \to \mathbf{Sub}(S)$$
 admits a left adjoint: $$\exists_k : \mathbf{Sub}(S) \to \mathbf{Sub}(T)$$

 Thanks to the uniqueness of the left adjoints (between posets), we also get a covariant functor:
 $$\mathbf{Sub}_\exists : \mathbf{SSet} \to \mathbf{Pos}$$

 We can now define the functors $\rho$ (and $\rho_\mathbf{Filt}$) by mapping a (filtered) quasicategory $\mathcal{A}$ to the sub-poset of $\mathbf{Sub}_\exists(\mathcal{A} \times \mathbf{N}(\omega))$ spanned by the simplicial subsets of the form described above. Following the proof of Theorem 9.1.6.2 in \cite{kerodon}, the latter is indeed a filtered poset when $\mathcal{A}$ is filtered. We still need to check that, given an $\infty$-functor $k : \mathcal{A} \to \mathcal{B}$ the direct image action $$\exists_k : \mathbf{Sub}(\mathcal{A} \times \mathbf{N}(\omega)) \to \mathbf{Sub}(\mathcal{B} \times \mathbf{N}(\omega))$$ is compatible with the previous construction (that is, it maps $\rho_\mathbf{Filt}(\mathcal{A})$ to $\rho_\mathbf{Filt}(\mathcal{B})$).
 By definition, a simplicial subset $K^\triangleright \subset \mathcal{A} \times \mathbf{N}(\omega)$ in $\rho_\mathbf{Filt}(\mathcal{A})$, is mapped by $\exists_k$ to the simplicial subset obtained as the following image:
 
\[\begin{tikzcd}[ampersand replacement=\&]
	{K^\triangleright} \&\& {\mathcal{A} \times \mathbf{N}(\omega)} \&\& {\mathcal{B} \times \mathbf{N}(\omega)} \\
	\\
	\&\& {L'}
	\arrow[from=1-1, to=1-3]
	\arrow[two heads, from=1-1, to=3-3]
	\arrow[from=1-3, to=1-5]
	\arrow[hook, from=3-3, to=1-5]
\end{tikzcd}\]
However, since $K \to L'$ is an epimorphism and $K^\triangleright$ is finite, $L'$ must be finite. Moreover, the cone point $x$ of $K^\triangleright$ is mapped to a cone point $y$, incidentally showing that $L' = L^\triangleright$ for some finite $L$. 

Indeed, given a $n$-simplex $\sigma$ of $L'$ whose vertices are different from $y$, there exists an $n$-simplex $\sigma'$ of $K^\triangleright$ (hence actually of $K$) which is mapped by $K^\triangleright \to L'$ to $\sigma$, since this morphism is epi. There is a unique $n+1$-simplex $\theta'$ of $K^\triangleright$ with last vertex $x$ and such that the first $n$-vertices inclusion yields $\sigma' \subset \theta'$. The image of this simplex extends $\sigma$ to a $(n+1)$-simplex $\theta$. It is the unique such simplex with the property that its last vertex is $y$ and that the inclusion of the first $n$ vertices is given by $\sigma$, because $y$ has not been collapsed to another vertex (since its natural number label is still strictly greater than those of the other vertices). By definition of the construction $(-)^\triangleright$, this precisely means that $L' = L^\triangleright$, where $L$ is the full simplicial subset of $L'$ spanned by the vertices different from $y$.
This concludes the description of the functor $\rho_\mathbf{Filt}$.

The final $\infty$-functors $\rho_\mathbf{Filt}(\mathcal{A}) \to \mathcal{A}$ obtained canonically from the cone point projection, as discussed in the proof of Theorem 9.1.6.2 in \cite{kerodon}, can indeed be checked to assemble into a natural transformation,  since the direct image functors preserve the cone point of the simplicial subset of the form $K^\triangleright$ as seen above.

Commutation with coproduct follows from the fact that a diagram of the form $K^\triangleright$ in $\mathcal{A} \amalg \mathcal{B}$ must lie entirely in either component of the coproduct (this is because the diagram is connected). 

It is also clear that $\rho$ preserves filtered colimits since the subobjects considered in the definition are finite. This actually entails that $\rho$ preserves reflexive coequalizers. Indeed, applying \cite[Theorem 1.3]{barr1974right}, it is enough to show that $\rho$ maps coequalizers of kernel pairs to coequalizers. Hence, we consider a regular epimorphism $f : X \to Y$ in $\mathbf{SSet}$ and we will show that the following diagram is a coequalizer: 
\[\begin{tikzcd}[ampersand replacement=\&]
	{\rho (X \times_Y X)} \&\& {\rho X} \&\& {\rho Y}
	\arrow["{\rho\pi_0}", shift left=2, from=1-1, to=1-3]
	\arrow["{\rho\pi_1}"', shift right=2, from=1-1, to=1-3]
	\arrow["{\rho f}"', from=1-3, to=1-5]
\end{tikzcd}\]
It is clear that any two diagrams $K^\triangleright \to X$ and $(K')^\triangleright \to X$ identified by the equivalence relation generated from $\rho \pi_0$ and $\rho \pi_1$ have their simplices identified by $\pi_0$ and $\pi_1$. Conversely, suppose two such diagrams are mapped to the same diagram in $\rho Y$. By definition, we have a map between spans as below, that is pointwise a monomorphism.
\[\begin{tikzcd}[ampersand replacement=\&]
	{(K \times_L K')^\triangleright } \&\&\& {X\times_Y X} \\
	\& {K^\triangleright \times_{L^\triangleright} (K')^\triangleright} \&\&\&\& X \\
	\&\& X \& {(K')^\triangleright} \\
	{K^\triangleright} \&\&\&\& Y \\
	\&\& {L^\triangleright}
	\arrow[dashed, hook, from=1-1, to=2-2]
	\arrow[dashed, from=1-1, to=4-1]
	\arrow[dashed, from=1-4, to=2-6]
	\arrow[dashed, from=1-4, to=3-3]
	\arrow[dashed, hook, from=2-2, to=1-4]
	\arrow[dashed, from=2-2, to=3-4]
	\arrow[dashed, from=2-2, to=4-1]
	\arrow[two heads, from=2-6, to=4-5]
	\arrow[two heads, from=3-3, to=4-5]
	\arrow[hook, from=3-4, to=2-6]
	\arrow[two heads, from=3-4, to=5-3]
	\arrow[hook, from=4-1, to=3-3]
	\arrow[two heads, from=4-1, to=5-3]
	\arrow[hook, from=5-3, to=4-5]
\end{tikzcd}\]
Here, the mediating arrow $(K \times_L K')^\triangleright  \to K^\triangleright \times_{L^\triangleright} (K')^\triangleright$ is a monomorphism, hence so is the diagram $(K \times_L K')^\triangleright \to X \times_Y X$. Moreover, by construction, this diagram is mapped by $\rho \pi_0$ (resp. $\rho \pi_1$) to the subobject $K^\triangleright \to X$ (resp. $(K')^\triangleright \to X$). Therefore, the latter two diagrams are identified by the equivalence relation generated from $\rho \pi_0$ and $\rho \pi_1$. This proves that $\rho Y$ is the quotient of $X$ by this equivalence relation, so that $\rho$ preserves the coequalizer of the kernel pair we started from.

Finally, $\rho$ preserves all (small) colimits because it preserves coproducts and reflexive coequalizers.
\end{proof}

\subsection{Fibration categories in $\mathbf{RelCat}$}

We will consider the Barwick-Kan model structure on the category $\mathbf{RelCat}$ of relative categories (\cite{barwick2012relative}). Fibration categories are fibrant (this is established in \cite{meier2016fibration}), consequently, any map $I \to \mathcal{F}$ in the homotopy category $\mathbf{Ho}(\mathbf{RelCat})$ with a codomain that is a fibration category may be represented by a span of homotopical functors:
\[\begin{tikzcd}[ampersand replacement=\&]
	\& J \\
	I \&\& {\mathcal{F}}
	\arrow[from=1-2, to=2-1]
	\arrow[from=1-2, to=2-3]
\end{tikzcd}\]
We will show that this result can be strengthened as follows.

\begin{proposition}
\label{fibcat_fibrant}
 Let $\mathcal{F}$ be a fibration category and $I$ be an inverse homotopical category with finite coslice $I_{i/}$. Then any map $I \to \mathcal{F}$ in the homotopy category $\mathbf{Ho}(\mathbf{RelCat})$ can be represented by a single homotopical functor $I \to \mathcal{F}$.
\end{proposition}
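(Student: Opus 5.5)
Since $\mathcal{F}$ is fibrant in the Barwick--Kan model structure, we start from a span of homotopical functors $I \xleftarrow{w} J \xrightarrow{g} \mathcal{F}$ representing the given map in $\mathbf{Ho}(\mathbf{RelCat})$, with $w$ a Barwick--Kan weak equivalence. We may take $J \to I$ to be a cofibrant-replacement-type map, or use cofibrancy of $I$ directly. The aim is to replace this span by a single homotopical functor $I \to \mathcal{F}$ that agrees with it in the homotopy category. The strategy has two parts. First, we use the fibration category structure to move the data into Reedy fibrant diagrams, where homotopy limits are strict limits. Second, we use the inverse structure of $I$, together with finiteness of the coslices $I_{i/}$, to build a strict diagram on $I$ by induction on degree.

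We proceed in three steps.

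\begin{enumerate}
\item We pass to the $(\infty,1)$-categorical picture. A map $I \to \mathcal{F}$ in $\mathbf{Ho}(\mathbf{RelCat})$ corresponds to a homotopy class of $\infty$-functors $N(I)[W^{-1}] \to N\mathcal{F}[W^{-1}]$, using that Barwick--Kan weak equivalences are detected after localization. By the DK-equivalence of $\mathcal{F}$ with its localization, this is the same as a diagram $I \to \mathbf{Ho}_\infty(\mathcal{F})$ sending the weak equivalences of $I$ to equivalences.

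\item We construct a Reedy fibrant strict lift by induction on degree, in the spirit of the standard result that homotopy coherent diagrams on finite inverse categories in a fibration category rectify to Reedy fibrant diagrams (Szumi{\l}o, Radulescu-Banu). At an object $i$, the matching object $M_i X = \lim_{I_{i/}\setminus\{i\}} X$ exists in $\mathcal{F}$ because $I_{i/}$ is finite and the already-constructed restriction is Reedy fibrant. The required object $X_i$ together with its map to $M_i X$ is obtained by taking the composite $\infty$-map into the homotopy limit, strictifying it as a morphism in $\mathcal{F}$ (morphisms in $\mathbf{Ho}_\infty\mathcal{F}$ are represented by spans with a trivial fibration, and we absorb the trivial fibration into $X_i$), and then factoring it as a weak equivalence followed by a fibration. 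Because homotopy limits of Reedy fibrant diagrams coincide with strict limits, the homotopy coherent data is matched at each stage.

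\item We check the result. The diagram is homotopical because the images of weak equivalences in $I$ are equivalences in $\mathbf{Ho}_\infty\mathcal{F}$, and weak equivalences of $\mathcal{F}$ are saturated (fibration categories have saturated weak equivalences up to the usual argument, or one works with the saturation). We then verify that the resulting functor represents the original class, using a natural zigzag of weak equivalences connecting it to $g$ over $J$.
\end{enumerate}

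The main obstacle is the inductive step in step 2. Choosing the maps $X_i \to M_i X$ is easy, but the homotopy coherence data must be carried along so that the final strict diagram is equivalent to the original $\infty$-diagram and not merely objectwise equivalent. A cleaner route is to bypass $\infty$-categories entirely and work directly with the span. Here one shows that the restriction functor from Reedy fibrant homotopical diagrams $\mathcal{F}^I_R$ to $\mathcal{F}^J$ induces the needed equivalence, using that $\mathcal{F}^I_R$ is a fibration category whose Barwick--Kan homotopy type computes the mapping space. I would try this direct route first: realize the span via the left adjoint of $w^*$ and apply Reedy fibrant replacement in $\mathcal{F}^I$. The finiteness of $I_{i/}$ is exactly what makes the matching objects exist.
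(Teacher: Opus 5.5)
There is a genuine gap, and it is the one you flagged yourself without closing. In step 2, the phrase \emph{the composite $\infty$-map into the homotopy limit} only makes sense if your induction hypothesis carries more than the strict Reedy fibrant diagram $X$ on objects of degree $<n$. It must also carry a natural equivalence, in $\mathbf{Ho}_\infty\mathcal{F}$, between $X$ and the restriction of the given $\infty$-diagram $G$. Only through that equivalence is the $\infty$-limit of $G$ over the punctured coslice $\partial I_{i/}$ identified with the strict matching object $M_iX$.

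Choosing $X_i\to M_iX$ object by object never produces or extends such an equivalence. You only get a diagram that agrees with $G$ objectwise, and arrow by arrow up to homotopy. That is not enough to represent the class in $\mathbf{Ho}(\mathbf{RelCat})$. To make this route work you would need:
\begin{itemize}
\item to put the equivalence into the induction;
\item the fact that the nerve of the degree-$\le n$ part is obtained from the degree-$<n$ part by attaching cones on the nerves of the $\partial I_{i/}$;
\item the fact that $M_iX$ is an $\infty$-limit in $\mathbf{Ho}_\infty\mathcal{F}$, so that extending the equivalence over each new cone reduces to an equation in $\mathbf{Ho}(\mathcal{F})$;
\item a passage to the union over $n$, since $I$ may be infinite (as $P_F^{op}$ is in the application).
\end{itemize}
With these ingredients, or a cited rectification theorem for Reedy fibrant diagrams on inverse categories, this would be a legitimate alternative route.

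Step 3 has a further soft spot. It uses saturation of the weak equivalences of $\mathcal{F}$, which is not among the axioms here (only 2-out-of-3 is assumed). Passing to the saturation changes the target relative category. Also, you cannot \emph{use cofibrancy of $I$ directly}: if $I$ were Barwick--Kan cofibrant, the statement would follow at once from fibrancy of $\mathcal{F}$.

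Your fallback does not repair this.
\begin{itemize}
\item The left adjoint of $w^*$ is left Kan extension, which needs colimits that a fibration category lacks.
\item For an arbitrary $J$ you can neither Reedy-fibrantly replace $g$ nor guarantee that the pointwise limits over $i\downarrow w$ exist in $\mathcal{F}$ or are homotopy limits.
\item There is not yet any diagram on $I$ to Reedy-fibrantly replace.
\end{itemize}

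The missing idea, which the paper uses, is to fix a well-controlled $J$ and take the \emph{right} adjoint. Take $J=\xi I$, the Barwick--Kan double subdivision. Then $\pi:\xi I\to I$ is a weak equivalence and $\xi I=\mathbf{K}_\xi\mathbf{N}I$ is cofibrant. So the class is represented by an honest span $I\xleftarrow{\pi}\xi I\xrightarrow{F}\mathcal{F}$, and $\xi I$ is again inverse with finite coslices.
\begin{enumerate}
\item Replace $F$ by a Reedy fibrant $F'$, using only finite limits.
\item Set $F_e=\pi_*F'$, computed at $i$ as the limit of $F'$ restricted along $i\downarrow\pi\to\xi I$. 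This functor is a discrete opfibration, so the restriction is a finite Reedy fibrant diagram. Its limit therefore exists and is a homotopy limit, and $F_e$ is a homotopy right Kan extension.
\item Since $\mathbf{Ho}_\infty\pi$ is an equivalence, the counit $F_e\circ\pi\to F'$ is invertible in the $\infty$-categorical sense. Hence $F_e$ represents the given class.
\end{enumerate}
All the coherence you were trying to carry by hand is already encoded in the strict functor $F$ on $\xi I$.
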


\begin{proof}
By Lemma 5.4 in \cite{barwick2012relative}, the projection $\pi: \xi I \to I$ is a weak equivalence of relative categories, where $\xi$ is the "relative" double subdivision functor. Moreover, $\xi I \simeq \mathbf{K}_\xi \mathbf{N} I$ is cofibrant since every object is cofibrant in the Rekz model structure on simplicial spaces, and since $$\mathbf{K}_\xi : \mathbf{sSpace}_\mathbf{Rekz} \to \mathbf{RelCat}$$ is a left Quillen functor. Therefore, we may consider a span representing the map $I \to \mathcal{F}$ as below.
\[\begin{tikzcd}[ampersand replacement=\&]
	\& {\xi I} \\
	I \&\& {\mathcal{F}}
	\arrow["\pi"', from=1-2, to=2-1]
	\arrow["F", from=1-2, to=2-3]
\end{tikzcd}\]
where $\xi I$ is also an inverse category.
 We consider a Reedy fibrant replacement of $F'$ of $F$, as constructed in \cite[Theorem 9.2.4]{radulescu2006cofibrations} in a slightly different context (namely with a notion of fibration category having all small homotopy limit). However, the fact that $\xi I$ is also an inverse category with finite coslice ensures that the fibrant replacement can be constructed relying only on finite limits, which exist in $\mathcal{F}$.
 
 Now, we may compute the right Kan extension of $F'$ along $\pi$. Indeed, given an object $i \in I$, the value at $i$ is computed as the limit of the diagram $F \circ p_i$ as below

\[\begin{tikzcd}[ampersand replacement=\&]
	{i \downarrow \pi} \&\& {\xi I} \&\& {\mathcal{F}} \\
	\\
	{*} \&\& I
	\arrow["{p_i}", from=1-1, to=1-3]
	\arrow[from=1-1, to=3-1]
	\arrow["F", from=1-3, to=1-5]
	\arrow["\pi"', from=1-3, to=3-3]
	\arrow[between={0.3}{0.7}, Rightarrow, from=3-1, to=1-3]
	\arrow["i"', from=3-1, to=3-3]
\end{tikzcd}\]
where $p_i$ is an discrete opfibration (the fibers at $j \in \xi I$ being the homset $Hom(i,\pi j)$). In particular, the precomposition functor $p_i^* : \mathcal{F}^{\xi I} \to \mathcal{F}^{i \downarrow \pi}$ preserves the matching category construction, mapping $F'$ to a (finite) Reedy fibrant diagram. The Kan extension is moreover a homotopy Kan extension, so that the functor $F_e : I \to \mathcal{F}$ we obtain is mapped by $\mathbf{Ho}_\infty$ to the right Kan extension of $\mathbf{Ho}_\infty F'$ along $\mathbf{Ho}_\infty \pi$ in the $(\infty,1)$-category $\mathcal{QCat}$ of (small) quasicategories. The latter being an isomorphism, it follows that the $2$-cell involved in the Kan extension is invertible, so that $F_e$ indeed represents the correct morphism in the homotopy category $\mathbf{Ho}(\mathbf{RelCat})$.
\end{proof}

There is a fibred analogue to this result, that we obtain at the cost of taking values in a model category:

\begin{proposition}
\label{fibcat_fibrant_fb}
 Let $\mathcal{F}$ be a fibration category and $I \to B$ be an opfibration between homotopical categories, corresponding to a diagram $D : B \to \mathbf{Cat}$. Consider the diagram $D_\xi : \xi \circ D$, together with the natural transformation $D_\xi \to D$ induced by $\pi : \xi \to id_\mathbf{RelCat}$, and to a map $I_\xi \to I$ over $B$ through the Grothendieck construction.

 Then, for any map $I_\xi \to \mathcal{F}$, the functor $F : I_\xi \to \mathcal{F} \to \mathbf{R}(\mathcal{F})$ can be extended to homotopical functor $F_e : I \to \mathbf{R}(\mathcal{F})$.
\end{proposition}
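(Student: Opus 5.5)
Because $\mathbf{R}(\mathcal{F})$ sits inside the model category of simplicial presheaves, arbitrary small homotopy limits are available there, so the plan is to run the argument of \cref{fibcat_fibrant} fibrewise over $B$ and then glue the fibres by a right Kan extension along the map $q : I_\xi \to I$ over $B$. More precisely, I would work in $\mathcal{M} := \mathbf{SSet}^{\mathcal{F}^{op}}$ with the injective model structure, localized as in the definition of $\mathcal{P}(\mathcal{F})$. I would view $\mathbf{R}(\mathcal{F})$ as the full subcategory of fibrant objects that are weakly equivalent to representables. The composite $I_\xi \to \mathcal{F} \to \mathbf{R}(\mathcal{F})$ uses the Yoneda-type functor $\mathbf{y}$ of \cref{fibcat_ye}, which lands in fibrant presheaves.

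The first step is to replace $F$ by an injectively fibrant diagram $F' : I_\xi \to \mathcal{M}$ in the injective model structure on $\mathcal{M}^{I_\xi}$. Each value of $F'$ is then fibrant and weakly equivalent to $F$, so $F'$ still lands in $\mathbf{R}(\mathcal{F})$. The second step is to set $F_e := \mathrm{Ran}_q F'$. This is computed pointwise: for $i \in I$ lying over $b \in B$, $F_e(i)$ is the limit of $F'$ over $i \downarrow q$. Injective fibrancy makes this limit a homotopy limit, because right Kan extension is right Quillen for the injective structures.

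The third step is to identify this homotopy limit. Since $I \to B$ is an opfibration and $I_\xi \to I$ is fibrewise the projection $\pi_{D(b)} : \xi D(b) \to D(b)$, the comma category $i \downarrow q$ should contain $i \downarrow \pi_{D(b)}$ as a homotopy initial subcategory. Concretely, I would send an object $(j, i \to q j)$ to its opcartesian factorisation through the fibre. With this, $F_e(i)$ becomes the homotopy right Kan extension of $F'|_{\xi D(b)}$ along $\pi_{D(b)}$ evaluated at $i$. Exactly as in the proof of \cref{fibcat_fibrant}, $\pi_{D(b)}$ is a weak equivalence of relative categories and $F'$ is homotopical. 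So the counit $F_e \circ q \to F'$ is a weak equivalence, $F_e(i)$ is weakly equivalent to some $F'(j)$ and hence lies in $\mathbf{R}(\mathcal{F})$, and $F_e$ is homotopical on fibres.

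The last step, which I expect to be the main obstacle, is homotopicality of $F_e$ on weak equivalences of $I$ that are not vertical, that is, those that are opcartesian lifts of weak equivalences in $B$. I would factor any weak equivalence of $I$ as an opcartesian map followed by a vertical one. For the opcartesian part, I would use that its lift in $I_\xi$ is a weak equivalence mapping to it under $q$, together with 2-out-of-3 applied to the counit equivalences $F_e q \simeq F'$. This needs $q$ to be essentially surjective up to weak equivalence on morphisms, which the fibrewise statement of \cite[Lemma 5.4]{barwick2012relative} should provide. The hidden difficulty I would check carefully is that the comma-category comparison is genuinely homotopy initial and not merely a cofinality statement in the wrong direction.
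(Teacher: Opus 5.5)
Your outer skeleton is the same as the paper's: injectively fibrant replacement $F'$, then the pointwise right Kan extension along $q\colon I_\xi\to I$, which is a homotopy limit because $\mathrm{Ran}_q$ is right Quillen for the injective structures.

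The gap is your third step, and it is exactly the wrong-direction problem you were worried about: $i\downarrow\pi_{D(b)}$ is in general \emph{not} homotopy initial in $i\downarrow q$.

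\begin{itemize}
\item Take an object $(j,f\colon i\to q(j))$ with $f$ over $u\colon b\to b'$. The opcartesian factorisation $i\to u_!i\to q(j)$ produces an object of $(u_!i)\downarrow\pi_{D(b')}$, which lives in the fibre over $b'$, not over $b$.
\item Bringing $j$ back into the fibre over $b$ would require \emph{cartesian} lifts, which an opfibration does not provide in general.
\item What opcartesian factorisation does give is a left adjoint to the inclusion of slices $\pi_{D(b)}\downarrow i\hookrightarrow q\downarrow i$. That is the fibrewise formula for \emph{left} Kan extensions along a map of opfibrations, not right ones.
\end{itemize}

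A concrete failure: take $B=[1]$, $D(0)=[0]$, $D(1)=[1]$, and $D(0\to 1)$ the vertex $0$. Then $I\cong[2]$ with $i=(0,0)$ initial, so $i\downarrow q\cong I_\xi$. Meanwhile $i\downarrow\pi_{D(0)}\cong\xi[0]$ is a single object. Maps in the subdivision are face inclusions, so this object admits no map to the objects of $\xi[1]$ lying over the vertex $1$. Hence it is not initial, and the comma categories testing initiality are empty.

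The conclusion you draw from that step can still hold; in this example it does, thanks to weak equivalences of $\xi[1]$. But it has to come from the homotopical structure, not from strict cofinality. This is how the paper argues, globally rather than fibre by fibre:
\begin{itemize}
\item $q$ is treated as a weak equivalence of relative categories.
\item The right Kan extension of the homotopical, injectively fibrant $F'$ is then a homotopy right Kan extension along a weak equivalence.
\item So one gets a triangle $F_e\circ q\simeq F'$ commuting up to weak equivalence.
\item Hence each $F_e(i)$ is weakly equivalent to a value of $F'$, so it is essentially representable, and $F_e$ factors through $\mathbf{R}(\mathcal{F})$ as a homotopical functor.
\end{itemize}

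This global argument also makes your last step unnecessary. As written, that step rests on assumptions that are not in the statement, which only says $I$ and $B$ are homotopical categories. You assume that every weak equivalence of $I$ factors as an opcartesian weak equivalence followed by a vertical weak equivalence. You also assume that the opcartesian lifts in $I_\xi$ are weak equivalences.
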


\begin{proof}
 We proceed in similar way as for \cref{fibcat_fibrant}, that is, we first consider an injectively fibrant replacement $F'$ of $F$. Then, we may compute the right Kan extension of the composite $I_\xi \to \mathbf{R}(\mathcal{F}) \to \mathcal{P}(\mathcal{F})$ along $I_\xi \to I$. This is a homotopy Kan extension along a weak equivalence, so the diagram $I \to \mathcal{P}(\mathcal{F})$ we obtain fits in a homotopy commutative triangle (in $\mathbf{RelCat}$)

\[\begin{tikzcd}[ampersand replacement=\&]
	{I_\xi} \&\& {\mathbf{R}(\mathcal{F})} \&\& {\mathcal{P}(\mathcal{F})} \\
	\\
	I
	\arrow[from=1-1, to=1-3]
	\arrow[from=1-1, to=3-1]
	\arrow[from=1-3, to=1-5]
	\arrow[from=3-1, to=1-5]
\end{tikzcd}\]
so that it factors through $\mathbf{R}(\mathcal{F})$, yielding the expected homotopical functor $F_e : I \to \mathbf{R}(\mathcal{F})$. 
\end{proof}

\subsection{Projective cofibrancy from Reedy cofibrancy}

Given a Quillen adjunction $$F : \mathcal{M} \to \mathcal{N} : G$$ between combinatorial model categories, and given a small category $I$, the induced adjunction $$F^I : \mathcal{M}^I \to \mathcal{N}^I : G^I$$ is a Quillen adjunction for both the injective and the projective model structures (this is the content of \cite[Remark A.2.8.6]{lurie2009}).

Given a Reedy category $R$, the Reedy cofibrancy criterion for $\mathcal{M}^R$ is expressed in terms of colimits and cofibrations in  $\mathcal{M}$. In particular, validity of the criterion is preserved by any left Quillen functor $F$ as above; hence, $\mathcal{M}^R \to \mathcal{N}^R$ is also left Quillen (this does not require $\mathcal{M}$ and $\mathcal{N}$ to be combinatorial). Actually, we don't even need to assume that $F$ is a left adjoint: preservation of colimits together with (trivial) cofibrations is enough to ensure to same properties for $F^R$.

The goal of this section is to leverage this observation to deduce a similar criterion for projective model structures (\cref{lquillen_reedy} below).

We rely on the following construction, that allows us to approximate diagrams in a given category $D$ by diagrams on a Reedy category, as discussed in \cite{dwyer2004homotopy}.

\begin{definition}
 For $D$ a small category, the category $\Delta D$ denotes the category of simplices of $D$. Explicitly, $\Delta^{op} D$ has:
 \begin{itemize}
  \item objects the sequences of $n$ composable arrows in $D$ (i.e, the functors $[n] \to D$).
  \item morphisms the maps $[m] \to [n]$ in $\Delta$ making the following diagram commute

\[\begin{tikzcd}[ampersand replacement=\&]
	{[m]} \&\& {[n]} \\
	\\
	\& D
	\arrow[from=1-1, to=1-3]
	\arrow[from=1-1, to=3-2]
	\arrow[from=1-3, to=3-2]
\end{tikzcd}\]
where we rely on the fact that $\Delta$ can be identified with the full subcategory of $\mathbf{Cat}$ spanned by the finite linear orders $[n]$.
\end{itemize}
We also write $\Delta^{op} D$ for the opposite of the category $\Delta D$.
\end{definition}

An object in $\Delta D$ of the form $[1] \to D$ corresponds to a single arrow $f : x \to y$ in $D$. Similarly, an object of the form $[2] \to D$ corresponds to a composable pair of arrows in $D$. Consider the map $d_0 : [1] \to [2]$ mapping $0$ (resp. $1$) to $1$ (resp. $2$). An arrow in $\Delta D$ whose underlying map in $\Delta$ is $d_0$ correspond to a diagram 
\[\begin{tikzcd}[ampersand replacement=\&]
	\&\& z \\
	x \\
	\&\& x \\
	y \\
	\&\& y
	\arrow["g", from=1-3, to=3-3]
	\arrow[dotted, no head, from=2-1, to=3-3]
	\arrow["f"', from=2-1, to=4-1]
	\arrow["f", from=3-3, to=5-3]
	\arrow[dotted, no head, from=4-1, to=5-3]
\end{tikzcd}\]
where the first object (i.e the single map $f$), coincides via restriction along $d_0$ to the second component of the second object (the pair of composable maps $g$ and $f$).

We observe that, if we project every diagram $F : [n] \to D$ (i.e object of $\Delta D$) to its initial object, the arrows $\alpha : F \to G$ in $\Delta D$ give rise to a unique mediating map $G_0 \to F_0$ corresponding to the unique map in the diagram $G : [n] \to D$ from the first vertex $G_0$ of $G$ to image of the first vertex $F_0$ of $F$ through the simplicial operator corresponding to $\alpha$. This mapping thus extends to a contravariant functor:

\begin{definition}
We write  $p_i : \Delta^{op} D \to D$ for the projection functor mapping each path to its first vertex. For $d$ an object of $D$, define $p_i^{-1}d$ to be the subcategory of the comma category $d \downarrow p_i$ whose objects are the paths $[n] \to D$ whose first vertex is $d$, and whose arrows are the simplicial operators $[m] \to [n]$ that map the first element of $[m]$ to the first element of $[n]$.  

Dually, we write $p_t : \Delta D \to D$ for the last-vertex functor, and we define $p_t^{-1}d$ in a similar way as $p_i^{-1}d$.
\end{definition}

The following results are established in 22-23 of \cite{dwyer2004homotopy}:

\begin{proposition}
\label{approx_reedy}
 With the previous notations:
 
 \begin{itemize}
  \item $p_i^{-1}d$ has a terminal object and is an initial subcategory of $d \downarrow p_i$.
  \item The subcategory of $p_i^{-1}d$ spanned by the increasing Reedy maps (i.e, the surjective simplicial operators), is a disjoint union of categories with initial objects. In particular, any constant diagram of shape $p_i^{-1}d$ is Reedy cofibrant (the latching categories all have an initial object).
  \item The comma category $d \downarrow p_i$ is isomorphic to $\Delta^{op}(D_{d/})$ and the projection functor $\pi_d : d \downarrow p_i \to \Delta^{op} D$, which results from the projection of the coslice  $D_{d/} \to D$, induces a right Quillen precomposition functor 
  $$\pi_d^* :  \mathcal{M}^{\Delta^{op} D} \to \mathcal{M}^{d \downarrow p_i}$$
  for any model category $\mathcal{M}$.
 \end{itemize}
 
 Dually:
 \begin{itemize}
  \item $p_t^{-1}d$ has an initial object and is a final subcategory of $p_t \downarrow d$.
  \item The subcategory of $p_t^{-1}d$ spanned by the increasing Reedy maps (i.e, the surjective simplicial operators), is a disjoint union of categories with terminal objects. In particular, any constant diagram of shape $p_i^{-1}d$ is Reedy fibrant.
  \item The comma category $p_t \downarrow d$ is isomorphic to $\Delta^{op}(D_{/d})$ and the projection functor $\pi_d : p_t \downarrow d \to \Delta^{op} D$, which results from the projection of the slice  $D_{/d} \to D$, induces a left Quillen precomposition functor 
  $$\pi_d^* :  \mathcal{M}^{\Delta^{op} D} \to \mathcal{M}^{p_t \downarrow d}$$
  for any model category $\mathcal{M}$.
 \end{itemize}
\end{proposition}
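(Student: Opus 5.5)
The statement is the one established in \cite{dwyer2004homotopy}, so the plan is to check each item directly from the combinatorics of $\Delta D$, treating the first list and deducing the dual list by passing to $D^{op}$. The convention I will use throughout is that a morphism $F \to G$ in $\Delta^{op} D$ is a simplicial operator $\alpha : [m] \to [n]$ with $G \circ \alpha = F$, written contravariantly. The whole proof uses only the explicit description of $\Delta D$ together with standard Reedy-theoretic criteria.

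\textbf{First item.} The path $[0] \to D$ at $d$, written $\langle d \rangle$, is terminal in $p_i^{-1}d$. Indeed, for any path $F : [n] \to D$ with $F_0 = d$, the only operator $[0] \to [n]$ sending $0$ to $0$ is the inclusion of the first vertex.

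For initiality of the inclusion $p_i^{-1}d \hookrightarrow d \downarrow p_i$, I would show that for every object $(F, u : d \to F_0)$ of $d \downarrow p_i$ the relevant comma category has a terminal object.
\begin{itemize}
\item The candidate is the prepended path $u \cdot F : [n+1] \to D$, which lies in $p_i^{-1}d$.
\item It comes with the coface $d^0 : [n] \to [n+1]$, which exhibits a morphism to $(F,u)$.
\item Any other factorisation through some $G \in p_i^{-1}d$ is given by an operator $[n] \to [k]$ together with the datum that the first vertex of $G$ maps to $u$. Such an operator extends uniquely to $[n+1] \to [k]$ by sending $0 \mapsto 0$.
\end{itemize}
This universal property yields contractible (in fact terminal-object) comma categories, hence initiality.

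\textbf{Second item.} This is an Eilenberg--Zilber argument. A surjective operator preserving the first element is a composite of degeneracies. Every path $F$ factors uniquely as a degeneracy followed by a nondegenerate path, namely the one obtained by deleting identity arrows. So the connected components of the subcategory of surjections are indexed by the nondegenerate paths starting at $d$, and in each component the nondegenerate path is the extremal object. With the variance of $\Delta^{op}$ this makes it initial, as claimed.

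It follows that each latching category of $p_i^{-1}d$ is either empty or has an initial object. Hence for a constant diagram every latching map is either $\emptyset \to c$ or $\mathrm{id}_c$, so every constant diagram is Reedy cofibrant. Here I would double-check which variance the Reedy structure on $p_i^{-1}d$ uses. This is the main bookkeeping risk in this item.

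\textbf{Third item.} The isomorphism $d \downarrow p_i \cong \Delta^{op}(D_{d/})$ is immediate. A path in $D_{d/}$ is a path $F$ in $D$ together with compatible maps $d \to F_j$, and these are determined by $d \to F_0$. Operators act compatibly on both sides.

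For the right Quillen claim, I would use the criterion of Hirschhorn (Theorem 15.10.2 type): precomposition along a map of Reedy categories preserving the degree structure is right Quillen as soon as, for each object, the induced functor on matching categories has connected (or empty) nerves. The projection $\pi_d$ is a discrete fibration compatible with the Reedy structures, since an operator into $\pi_d(F,u)$ lifts uniquely by restricting $u$. Consequently the matching categories of $(F,u)$ and of $F$ are isomorphic, so matching objects are preserved. Then $\pi_d^*$ preserves Reedy fibrations and trivial fibrations, and it has a left adjoint given by left Kan extension.

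\textbf{Main obstacle.} I expect the hardest part to be this last step: keeping the variances straight, so that the discrete-fibration property lands on matching rather than latching categories.

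\textbf{Dual list.} This follows by applying the above to $D^{op}$, using $\Delta(D^{op}) \cong \Delta D$ via reversal of paths. Reversal exchanges first and last vertices, slices and coslices, initial and terminal objects, and right and left Quillen.
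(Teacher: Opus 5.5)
Your proposal is correct, and it takes a different route from the paper simply because the paper gives no argument: it attributes the statement to Sections 22--23 of \cite{dwyer2004homotopy}. You instead verify each item by hand, and each verification holds up.
\begin{itemize}
\item The prepended path $(F,u)\mapsto u\cdot F$ is right adjoint to the inclusion $p_i^{-1}d\hookrightarrow d\downarrow p_i$. Your extension $0\mapsto 0$, $j+1\mapsto \beta(j)$ is exactly the unique factorization, and a functor with a right adjoint is initial.
\item Eilenberg--Zilber makes the nondegenerate root initial in each component of the surjection subcategory. Hence latching categories are empty or have an initial object, which is more accurate than the paper's parenthetical.
\item $\pi_d$ is a discrete opfibration in the $\Delta^{op}$ variance, and its lifts preserve degree and injectivity. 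So it identifies the matching category of $(F,u)$ with that of $F$, and the relative matching maps of $\pi_d^*f$ are those of $f$. That already proves $\pi_d^*$ preserves Reedy fibrations and trivial fibrations, so the appeal to Hirschhorn's criterion, which you paraphrase loosely, is unnecessary.
\end{itemize}

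What your route buys over the bare citation is explicit control of variances. This matters because the printed dual list has slips:
\begin{itemize}
\item in $\Delta D$ the surjections are the \emph{decreasing} Reedy maps, so matching categories are the relevant ones;
\item the constant diagram should have shape $p_t^{-1}d$;
\item one has $p_t\downarrow d\cong\Delta(D_{/d})$, with $\pi_d^*:\mathcal{M}^{\Delta D}\to\mathcal{M}^{p_t\downarrow d}$ left Quillen.
\end{itemize}
Your reduction to $D^{op}$ via path reversal, combined with replacing $\mathcal{M}$ by $\mathcal{M}^{op}$, produces exactly these corrected statements. It is the passage to $\mathcal{M}^{op}$, not reversal itself, that swaps right and left Quillen. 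You should state the dual list in this corrected form rather than as printed.

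Two small remarks. First, at a nondegenerate path the latching map of a constant diagram is $\emptyset\to c$. So the claim that every constant diagram is Reedy cofibrant needs $c$ cofibrant; this is precisely the cofibrant-constants property the paper uses afterwards. Second, the variance risk you flag is harmless here: surjections into $(F,u)$ also lift uniquely, because the structure map is forced to be $u$. So $\pi_d^*$ preserves latching objects as well.
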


It will be convenient to use the following terminology: given a lax commutative square of small categories
\[\begin{tikzcd}[ampersand replacement=\&]
	A \&\& B \\
	\\
	C \&\& D
	\arrow["f", from=1-1, to=1-3]
	\arrow["g"', from=1-1, to=3-1]
	\arrow[between={0.3}{0.7}, Rightarrow, from=1-3, to=3-1]
	\arrow["{g'}", from=1-3, to=3-3]
	\arrow["{f'}"', from=3-1, to=3-3]
\end{tikzcd}\]
and given a cocomplete category $\mathcal{M}$, there is a canonical \textit{mate} transformation $\mu$ (induced from the unit/counit of the adjunctions as well as the $2$-cell above) as below:
\[\begin{tikzcd}[ampersand replacement=\&]
	{\mathcal{M}^A} \&\& {\mathcal{M}^B} \\
	\\
	{\mathcal{M}C} \&\& {\mathcal{M}D}
	\arrow["{g_!}"', from=1-1, to=3-1]
	\arrow["{f^*}"', from=1-3, to=1-1]
	\arrow["\mu"', between={0.3}{0.7}, Rightarrow, from=1-3, to=3-1]
	\arrow["{(g')_!}", from=1-3, to=3-3]
	\arrow["{f'^*}", from=3-3, to=3-1]
\end{tikzcd}\]

We say that the former square is (left) ($\mathcal{M}$-)exact if the mate transformation is invertible. Similarly, there is a notion of (right) ($\mathcal{M}$-)exactness for an oplax commutative square of small categories and a complete category $\mathcal{M}$.

The following criterion for testing exactness of a square is useful in practice.

\begin{lemma}
 A lax commutative square of small categories

\[\begin{tikzcd}[ampersand replacement=\&]
	A \&\& B \\
	\\
	C \&\& D
	\arrow[from=1-1, to=1-3]
	\arrow["g"', from=1-1, to=3-1]
	\arrow[between={0.3}{0.7}, Rightarrow, from=1-3, to=3-1]
	\arrow[from=1-3, to=3-3]
	\arrow[from=3-1, to=3-3]
\end{tikzcd}\]
 is exact if and only if, for every object $c \in C$, the composite of the two lax commutative squares is exact:

\[\begin{tikzcd}[ampersand replacement=\&]
	{g \downarrow c} \&\& A \&\& B \\
	\\
	{*} \&\& C \&\& D
	\arrow[from=1-1, to=1-3]
	\arrow[from=1-1, to=3-1]
	\arrow[from=1-3, to=1-5]
	\arrow[between={0.3}{0.7}, Rightarrow, from=1-3, to=3-1]
	\arrow["g"', from=1-3, to=3-3]
	\arrow[between={0.3}{0.7}, Rightarrow, from=1-5, to=3-3]
	\arrow[from=1-5, to=3-5]
	\arrow["c"', from=3-1, to=3-3]
	\arrow[from=3-3, to=3-5]
\end{tikzcd}\] 
\end{lemma}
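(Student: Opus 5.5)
Throughout, write $h : B \to D$ and $k : C \to D$ for the two unnamed functors of the square, so that the $2$-cell is $\phi : kg \Rightarrow hf$ and the mate is $\mu : g_! f^* \Rightarrow k^* h_!$.

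The plan is to reduce the invertibility of $\mu$ to a pointwise check, and then identify each pointwise component with the mate of the composite square. A natural transformation between functors $\mathcal{M}^B \to \mathcal{M}^C$ is invertible exactly when all its components $\mu_{X,c} : (g_! f^* X)(c) \to (k^* h_! X)(c)$ are invertible, for $X \in \mathcal{M}^B$ and $c \in C$. Evaluation at $c$ is restriction along $c : * \to C$, so the task is to identify $c^*\mu$ with the mate of the pasted square.

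First I would use the pointwise formula for left Kan extensions, valid since $\mathcal{M}$ is cocomplete:
$$(g_! Y)(c) \cong \mathrm{colim}_{g \downarrow c}\, Y \circ q,$$
where $q : g \downarrow c \to A$ is the projection. This says that the comma square with vertices $g\downarrow c$, $A$, $*$, $C$, carrying the canonical $2$-cell $gq \Rightarrow c$, is exact. Explicitly, its mate $(t)_! q^* \Rightarrow c^* g_!$, with $t : g\downarrow c \to *$, is invertible; this is just the statement that the canonical map from the colimit over the comma category is an isomorphism.

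Second, I would use the pasting property of mates: the mate of a horizontally pasted pair of lax squares equals the composite of the two mates, whiskered appropriately. For the pasted square in the statement this gives
$$\mu_{\mathrm{tot}} \;=\; (c^*\mu) \circ (\nu\, f^*),$$
where $\nu : t_! q^* \Rightarrow c^* g_!$ is the mate of the comma square. Checking this identity is the main technical step. It is a formal $2$-categorical computation with units and counits, and I would verify it by writing out both sides via the triangle identities. The subtlety is the direction conventions for lax versus oplax squares, which have to be kept consistent with the $2$-cell $\phi$ of the paper's square.

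Finally, since $\nu$ is invertible by the first step, $\mu_{\mathrm{tot}}$ is invertible if and only if $c^*\mu$ is. Therefore $\mu$ is invertible if and only if $\mu_{\mathrm{tot}}$ is invertible for every $c \in C$, which is exactly the claimed equivalence. Both directions come out of the same argument.
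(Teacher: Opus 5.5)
The paper gives no proof of this lemma. It is stated as a standard criterion and then used, in dual form, in the proof of \cref{precomp_reedy_quillen}. Your argument is the standard proof: detect invertibility of $\mu$ objectwise, paste with the comma square, use exactness of the comma square, and use compatibility of mates with pasting. It is sound, up to one slip.

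Two remarks on the steps you flagged. First, the mate $\nu$ of the comma square has $(a,u)$-component $Y(a)\to (g_!Y)(ga)\xrightarrow{(g_!Y)(u)}(g_!Y)(c)$. This is the colimit coprojection, so $\nu$ is literally the comparison map of the pointwise formula. Second, the identity $\mu_{\mathrm{tot}}=(c^*\mu)\circ(\nu f^*)$ follows from naturality and the single triangle identity $g^*\epsilon\circ\eta g^*=\mathrm{id}$. So it is not a real obstacle.

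The slip is the orientation of $\phi$. With $\phi\colon kg\Rightarrow hf$ there is no canonical transformation $g_!f^*\Rightarrow k^*h_!$; that orientation produces $k^*h_*\Rightarrow g_*f^*$ instead. The left mate is $g_!f^*\xrightarrow{g_!f^*\eta}g_!f^*h^*h_!\xrightarrow{g_!\phi^*h_!}g_!g^*k^*h_!\xrightarrow{\epsilon k^*h_!}k^*h_!$, where $\phi^*$ is whiskering $X\mapsto X\phi$. This requires $\phi\colon hf\Rightarrow kg$.

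That is also what the paper's diagram depicts: the cell points from the corner $B$ to the corner $C$. It is the same orientation as the comma cell $\theta\colon gq\Rightarrow ct$, which you use correctly. With your stated orientation the two cells could not even be pasted. With the corrected one, the pasted cell is $(k\theta)\circ(\phi q)\colon hfq\Rightarrow kct$, and the rest of your proof goes through unchanged.
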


\begin{corollary}
\label{precomp_reedy_quillen}
For any model category $\mathcal{M}$, the precomposition functor $$p_i^* : \mathcal{M}^{D} \to \mathcal{M}^{\Delta^{op} D}$$
 is a left Quillen functor from the projective model structure to the Reedy model structure.
 Dually, the precomposition functor $$p_t^* : \mathcal{M}^{D} \to \mathcal{M}^{\Delta D}$$
 is a right Quillen functor from the injective model structure to the Reedy model structure.
 Moreover, the composite $$(p_i \circ p_t^{op})^* \mathcal{M}^{D} \to \mathcal{M}^{\Delta^{op} (\Delta D)}$$ is also a left Quillen functor, and the composite $$(p_t \circ p_i)^* \mathcal{M}^{D} \to \mathcal{M}^{\Delta^{op} (\Delta D)}$$ is a right Quillen functor (with the same model structures as above).
\end{corollary}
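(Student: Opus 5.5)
The plan is to verify the Quillen conditions directly through the right adjoints (respectively left adjoints), using the Kan extension formulas together with \cref{approx_reedy}. For the first claim, $p_i^*$ preserves colimits and has a right adjoint $(p_i)_*$, the right Kan extension along $p_i$. It therefore suffices to show that $(p_i)_*$ is right Quillen from the Reedy structure on $\mathcal{M}^{\Delta^{op}D}$ to the projective structure on $\mathcal{M}^{D}$. Since projective fibrations and trivial fibrations are detected objectwise, we need that for each $d\in D$ the functor $X\mapsto ((p_i)_*X)(d)=\lim_{d\downarrow p_i}\pi_d^*X$ preserves fibrations and trivial fibrations between Reedy fibrant diagrams, or, better, all of them.

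This factors as a composite of three functors:
\begin{enumerate}
\item the precomposition $\pi_d^*:\mathcal{M}^{\Delta^{op}D}\to\mathcal{M}^{d\downarrow p_i}$, which is right Quillen for the Reedy structures by the third item of \cref{approx_reedy};
\item the restriction to the initial subcategory $p_i^{-1}d$, which does not change the limit by the first item;
\item the limit functor $\lim:\mathcal{M}^{p_i^{-1}d}\to\mathcal{M}$.
\end{enumerate}
For step 3 we use the standard criterion: $\lim$ is right Quillen for the Reedy structure exactly when constant diagrams are Reedy cofibrant, i.e.\ when the latching categories are empty or connected. This is the second item of \cref{approx_reedy}. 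We also have to check that restriction along $p_i^{-1}d\hookrightarrow d\downarrow p_i$ carries Reedy fibrations to Reedy fibrations. This should follow because the matching categories of $p_i^{-1}d$ are final in those of $d\downarrow p_i$; alternatively, we can combine the Kan-extension pointwise criterion (the exactness lemma above) with the fact that $p_i^{-1}d$ is initial. I expect this comparison of Reedy structures between $d\downarrow p_i\cong\Delta^{op}(D_{d/})$ and $p_i^{-1}d$ to be the main technical obstacle. The first thing I would try is showing that every object of $d\downarrow p_i$ has a canonical reflection into $p_i^{-1}d$ (prepend the map from $d$), compatible with degrees.

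The second claim is formally dual. $p_t^*$ preserves limits and has a left adjoint $(p_t)_!$. The objectwise values of $(p_t)_!$ are colimits over $p_t\downarrow d\cong\Delta^{op}(D_{/d})$, which we replace by colimits over the final subcategory $p_t^{-1}d$. Constant diagrams there are Reedy fibrant by the dual second item, so the colimit is left Quillen for the Reedy structure. Injective cofibrations and trivial cofibrations are objectwise, so $(p_t)_!$ is left Quillen from Reedy to injective, and hence $p_t^*$ is right Quillen.

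For the composites, we iterate. Applying the first claim to the category $\Delta D$ in place of $D$ gives $p_i^*:\mathcal{M}^{\Delta D}_{\mathrm{proj}}\to\mathcal{M}^{\Delta^{op}(\Delta D)}_{\mathrm{Reedy}}$ left Quillen. It then remains to see that $(p_t^{op})^*$ is left Quillen from the projective structure on $\mathcal{M}^D$ to the projective structure on $\mathcal{M}^{\Delta D}$ (with the appropriate variance). To get this we use that projective-to-Reedy left Quillen maps can be checked on generating cofibrations $D(x,-)\otimes c$. Their pullbacks along $p_t$ are coproducts of representables over the connected-component decomposition of $p_t^{-1}$, by the initial-object statement of \cref{approx_reedy}, so they are Reedy cofibrant in the required sense. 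The right Quillen composite $(p_t\circ p_i)^*$ is handled dually, with generating injective fibrations replaced by the objectwise description of $(p_t)_!$ from the second claim.
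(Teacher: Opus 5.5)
Your argument for $p_i^*$, and its dual for $p_t^*$, is the paper's. You write $d^*(p_i)_*\simeq {!_*}\, i_d^*$ via the initial subcategory $p_i^{-1}d$, take $i_d^*$ right Quillen from \cref{approx_reedy}, and get $!_*$ right Quillen because constants on $p_i^{-1}d$ are Reedy cofibrant. The restriction step you flag as the main obstacle is exactly what the paper asserts there.

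The gap is in the composites.
\begin{itemize}
\item Your factorization $\Delta^{op}(\Delta D)\xrightarrow{p_i}\Delta D\to D$ forces the second functor to be $p_t\colon \Delta D\to D$. So you are factoring $p_t\circ p_i$, which the statement asserts is \emph{right} Quillen. The left Quillen composite $p_i\circ p_t^{op}$ passes through $\Delta^{op}D$, not $\Delta D$, and you never treat it.
\item More seriously, your intermediate claim that $p_t^*\colon \mathcal{M}^{D}_{\mathrm{proj}}\to\mathcal{M}^{\Delta D}_{\mathrm{proj}}$ is left Quillen is false. Take $D=[0]$: then $\Delta D\cong\Delta$ and $p_t^*$ is the constant cosimplicial object functor. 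With $\mathcal{M}=\mathbf{SSet}$, let $X^n$ be the nerve of the indiscrete groupoid on $\{0,\dots,n\}$. Then $X\to *$ is objectwise a trivial fibration, yet $\lim_\Delta X=\emptyset$, because the two cofaces $X^0\to X^1$ pick different vertices. So the right adjoint $\lim_\Delta$ is not right Quillen.
\item Your justification misreads \cref{approx_reedy}. The category of elements of $p_t^*D(x,-)$ is the coslice $x\downarrow p_t$. Degeneracies stop it from being a disjoint union of categories with initial objects: the $0$-simplex $a$ (with $f\colon x\to a$) maps to the degenerate $1$-simplex $a\xrightarrow{\mathrm{id}}a$ along both maps $[0]\to[1]$. \Cref{approx_reedy} is instead about the slice $p_t\downarrow d$ and its final subcategory $p_t^{-1}d$, which control left Kan extension along $p_t$, not pullback of corepresentables.
\item The "dual" treatment via generating injective fibrations has no content, since the injective structure has no usable generating set of fibrations.
\end{itemize}

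The paper never puts a projective structure on $\mathcal{M}^{\Delta D}$; it iterates at the level of Kan extensions rather than precompositions. It shows that left Kan extension along $p_i^{op}\circ p_t\colon \Delta(\Delta D)\to D^{op}$ is left Quillen from the Reedy structure, using the pointwise formula. The key identification is $(p_i^{op}\circ p_t)\downarrow d\cong\Delta(p_i^{op}\downarrow d)$. The colimit over this comma category is then computed in four steps:
\begin{itemize}
\item restrict along $\Delta(p_i^{op}\downarrow d)\to\Delta(\Delta D)$, which is Reedy to Reedy by \cite[22.10 (iii)]{dwyer2004homotopy};
\item left Kan extend along $p_t\colon \Delta(p_i^{op}\downarrow d)\to p_i^{op}\downarrow d$, which is left Quillen by the first part, dualized;
\item restrict to the final subcategory $(p_i^{op})^{-1}d$;
\item take the colimit there.
\end{itemize}
Applying this to $\mathcal{M}^{op}$ shows that right Kan extension along $p_i\circ p_t^{op}$ is right Quillen, hence $(p_i\circ p_t^{op})^*$ is left Quillen. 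The other composite uses the analogous diagram with $(p_t\circ p_i)\downarrow d\cong\Delta^{op}(p_t\downarrow d)$. Identifying these comma categories with categories of simplices is the idea your proposal is missing.
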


\begin{proof}
 Let us prove the first statement. For this, it is enough to check that the right Kan extension functor $(p_i)_*$ is right Quillen. To see this, observe that, for any object $d$ of $D$, the following rectangle is exact,
\[\begin{tikzcd}[ampersand replacement=\&]
	{p_i^{-1}d} \&\& {d \downarrow p_i} \&\& {\Delta^{op} D} \\
	\\
	{*} \&\& {*} \&\& D
	\arrow[from=1-1, to=1-3]
	\arrow["{i_d}", curve={height=-12pt}, from=1-1, to=1-5]
	\arrow["{!_d}"', from=1-1, to=3-1]
	\arrow[from=1-3, to=1-5]
	\arrow[from=1-3, to=3-3]
	\arrow["{p_i}", from=1-5, to=3-5]
	\arrow[from=3-1, to=3-3]
	\arrow[between={0.3}{0.7}, Rightarrow, from=3-3, to=1-5]
	\arrow["d"', from=3-3, to=3-5]
\end{tikzcd}\]
 i.e, the associated mate transformation is invertible ($d^* \circ (p_i)_* \simeq  !_* \circ i_d^*$), and the functor $i_d^*$ is right Quillen by \cref{approx_reedy}. Hence, it is enough to observe that $!_*$  is also right Quillen. This is the case since $p_i^{-1}d$ has cofibrant constant as observed in \cref{approx_reedy}, applying Lemma 9.4 and Corollary 9.6 of \cite{riehlreedy}.
 
 For the first part of the second statement, we can argue similarly by considering the following diagram where the bottom left square and the rectangle on the right are exact:

\[\begin{tikzcd}[ampersand replacement=\&]
	\&\& {(p_i^{op} \circ p_t) \downarrow d \simeq \Delta(p_i^{op} \downarrow d)} \&\& { \Delta(\Delta D)} \\
	{(p_i^{op})^{-1}d} \&\& {p_i^{op} \downarrow d} \&\& {\Delta D} \\
	{*} \&\& {*} \&\& {D^{op}}
	\arrow[from=1-3, to=1-5]
	\arrow["{p_t}"', from=1-3, to=2-3]
	\arrow["\ulcorner"{anchor=center, pos=0.125, rotate=45}, draw=none, from=1-3, to=2-5]
	\arrow["{p_t}", from=1-5, to=2-5]
	\arrow[from=2-1, to=2-3]
	\arrow["{!}"', from=2-1, to=3-1]
	\arrow[from=2-3, to=2-5]
	\arrow[from=2-3, to=3-3]
	\arrow[between={0.3}{0.7}, Rightarrow, from=2-5, to=3-3]
	\arrow["{p_i^{op}}", from=2-5, to=3-5]
	\arrow[from=3-1, to=3-3]
	\arrow["d"', from=3-3, to=3-5]
\end{tikzcd}\]

By \cite[22.10 (iii)]{dwyer2004homotopy}, precomposition with the top arrow defines a left Quillen functor, and, by the first part of the proof, the left  Kan extension functor along $p_t : \Delta(p_i^{op} \downarrow d) \to p_i^{op} \downarrow d$ is left Quillen. Restriction along the inclusion $(p_i^{op})^{-1}d \to p_i^{op} \downarrow d$ is also a left Quillen functor, as well as the last colimit functor (i.e left Kan extension along $! : (p_i^{op})^{-1}d \to *$). This proves that left Kan extension along the composite $p_i^{op} \circ p_t :  \Delta(\Delta D) \to D^{op}$ is a left Quillen functor. Applying this with $\mathcal{M}^{op}$ in place of $\mathcal{M}$, we conclude that the right Kan extension functor along $p_i \circ p_t^{op}$ is right Quillen. Thus, the precomposition functor $(p_i \circ p_t^{op})^*$ is left Quillen.

The second part is similar by considering the following diagram:
\[\begin{tikzcd}[ampersand replacement=\&]
	\&\& {(p_t \circ p_i) \downarrow d \simeq \Delta^{op}(p_t \downarrow d)} \&\& { \Delta^{op}(\Delta D)} \\
	{(p_t)^{-1}d} \&\& {p_t \downarrow d} \&\& {\Delta D} \\
	{*} \&\& {*} \&\& D
	\arrow[from=1-3, to=1-5]
	\arrow["{p_i}"', from=1-3, to=2-3]
	\arrow["\ulcorner"{anchor=center, pos=0.125, rotate=45}, draw=none, from=1-3, to=2-5]
	\arrow["{p_i}", from=1-5, to=2-5]
	\arrow[from=2-1, to=2-3]
	\arrow["{!}"', from=2-1, to=3-1]
	\arrow[from=2-3, to=2-5]
	\arrow[from=2-3, to=3-3]
	\arrow[between={0.3}{0.7}, Rightarrow, from=2-5, to=3-3]
	\arrow["{p_t}", from=2-5, to=3-5]
	\arrow[from=3-1, to=3-3]
	\arrow["d"', from=3-3, to=3-5]
\end{tikzcd}\]
\end{proof}

\begin{lemma}
\label{exact_reedy_quillen}
 The following diagram is an exact square (i.e its mate is invertible):

\[\begin{tikzcd}[ampersand replacement=\&]
	{ \Delta^{op} (\Delta D)} \&\& {\Delta D} \&\& D \\
	\\
	{\Delta^{op}D} \&\& D \&\& D \\
	\\
	D \&\& D \&\& D
	\arrow["{p_i}", from=1-1, to=1-3]
	\arrow["{p_t^{op}}"', from=1-1, to=3-1]
	\arrow["{p_t}", from=1-3, to=1-5]
	\arrow["{p_t}", from=1-3, to=3-3]
	\arrow[from=1-5, to=3-5]
	\arrow[between={0.3}{0.7}, Rightarrow, from=3-1, to=1-3]
	\arrow["{p_i}"', from=3-1, to=3-3]
	\arrow["{p_i}"', from=3-1, to=5-1]
	\arrow[from=3-3, to=3-5]
	\arrow[from=3-3, to=5-3]
	\arrow[from=3-5, to=5-5]
	\arrow[from=5-1, to=5-3]
	\arrow[from=5-3, to=5-5]
\end{tikzcd}\]
\end{lemma}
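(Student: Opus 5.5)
By construction the right and bottom edges of the rectangle are identities, so the statement reduces to a single lax square. Write $L := p_i \circ p_t^{op} : \Delta^{op}(\Delta D) \to D$ and $R := p_t \circ p_i : \Delta^{op}(\Delta D) \to D$, and let $\theta : R \Rightarrow L$ denote the $2$-cell of the top left square. Exactness then means that the mate
$$\mu_X : L_!\, R^* X \longrightarrow X$$
is invertible for every $X \in \mathcal{M}^D$. I would check this pointwise, using the criterion stated just before \cref{precomp_reedy_quillen}. For each $d \in D$ it suffices to show that the composite square with $L \downarrow d \to *$ is exact. Concretely, this means the canonical map
$$\operatorname{colim}_{(\sigma,\, L\sigma \to d) \in L\downarrow d} X(R\sigma) \longrightarrow X(d)$$
should be an isomorphism. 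Here the map is induced by the composites $R\sigma \xrightarrow{\theta_\sigma} L\sigma \to d$.

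The first step is to make $\theta$ explicit. An object $\sigma$ of $\Delta^{op}(\Delta D)$ is a chain of simplices $s_0 \to \cdots \to s_n$ of $D$, linked by simplicial operators. The functor $p_i$ picks $s_0$, and $p_t$ then takes its last vertex. The functor $p_t^{op}$ picks $s_n$, and $p_i$ then takes its first vertex. The operator $s_0 \to s_n$ identifies $s_0$ with a face of $s_n$, so the first vertex of $s_n$ maps, inside the simplex $s_n$, to the last vertex of the image of $s_0$; this arrow of $D$ gives $\theta_\sigma$. I would fix these directions carefully at this point, so that the resulting map really runs between $R\sigma$ and $L\sigma$ in the direction matching the $2$-cell.

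The second step is to introduce the functor
$$\varphi : L\downarrow d \to D_{/d}, \qquad (\sigma, L\sigma\to d) \mapsto (R\sigma \to L\sigma \to d).$$
The map above factors as $\operatorname{colim}_{L\downarrow d} X\circ \pi \circ \varphi \to \operatorname{colim}_{D_{/d}} X \circ \pi \cong X(d)$, where $\pi : D_{/d} \to D$ is the projection; the last isomorphism holds because $D_{/d}$ has a terminal object. So it suffices to prove that $\varphi$ is final. That is, for every $(e \xrightarrow{f} d)$ in $D_{/d}$, the comma category $f\downarrow\varphi$ should be nonempty and connected (or contractible, if the statement is needed homotopically later). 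To get nonemptiness, I would take the chain consisting of the single $1$-simplex $f$ viewed as a one-element chain, possibly degenerated. This chain is expected to give an object whose structure map to $d$ is the identity. For connectedness, I would imitate the arguments of \cref{approx_reedy} (22--23 of \cite{dwyer2004homotopy}). There the fibre $p_i^{-1}d$ has a terminal object and sits as an initial subcategory of the comma category. Here I would show that $f\downarrow\varphi$ contains a reflective (or coreflective) subcategory with a terminal object. The candidate is the chains whose last simplex begins at $L\sigma=d$ and whose first simplex ends at $e$. The retraction would be given by concatenating $f$ (respectively the structure map to $d$) onto the simplices, which is possible because $\Delta D$ is closed under adjoining vertices.

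The main obstacle is this last finality/connectedness argument. The category $\Delta^{op}(\Delta D)$ involves two levels of simplicial indexing, and one has to check that the concatenation used as a retraction is functorial with respect to simplicial operators at both levels. It also has to be compatible with $\theta$. If the direct retraction fails, I would fall back on factoring $\varphi$ as a composite of the two one-level projections. For each of these, finality (or the required exactness) would follow from \cref{approx_reedy} applied to $\Delta D$ and to $D_{/d}$, using the isomorphisms $p_t\downarrow d \cong \Delta^{op}(D_{/d})$ and $d\downarrow p_i \cong \Delta^{op}(D_{d/})$, in the same way as the pasting diagrams in the proof of \cref{precomp_reedy_quillen}.
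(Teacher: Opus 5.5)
Your framework is sound: collapse the rectangle to one lax square, apply the pointwise criterion, and prove finality of a comparison functor into $D_{/d}$. But the $2$-cell points the other way, and this is not cosmetic. Your own description of $\theta_\sigma$, from the first vertex of $s_n$ to the image of the last vertex of $s_0$, is an arrow $L\sigma \to R\sigma$. No transformation $R \Rightarrow L$ can exist at all: for $D=[1]$ and $\sigma$ the one-element chain on the nondegenerate $1$-simplex, you would need an arrow $1 \to 0$. This agrees with the diagram, whose cell is $p_i \circ p_t^{op} \Rightarrow p_t \circ p_i$. So the map $L_! R^* X \to X$ you propose to invert does not exist, and your $\varphi$ on $L\downarrow d$ is undefined. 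The mate that does exist is $R_! L^* X \to R_! R^* X \to X$, and it is the one \cref{lquillen_reedy} uses (left Kan extension along $p_t\circ p_i$). Pointwise, it suffices that
\[
\varphi : R\downarrow d \to D_{/d}, \qquad (\sigma,\ u : R\sigma \to d) \longmapsto (L\sigma \xrightarrow{\,u\,\theta_\sigma\,} d)
\]
be final. Note that your nonemptiness witness (the $1$-simplex $f$ with structure map $\mathrm{id}_d$) is in fact an object of $R\downarrow d$, not of $L\downarrow d$.

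The second problem is that connectedness of $f\downarrow\varphi$, for $f : e \to d$, is the real content, and the mechanism you suggest fails. You cannot append $d$ to every simplex of a chain, since only $R\sigma$ carries a map to $d$. Your candidate subcategory (with $e$ and $d$ in their correct roles) also has no terminal object. A morphism into an object with chain $\tau$ forces $\tau$ to be a subchain of the source, and the one-element chains on $f$ and on a degeneracy of $f$ share no subchain.

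An explicit zigzag does work. First map $(\sigma,u,v)$ to the one-element subchain $(s_0)$, keeping $u$ and composing $v$ with $(s_n)_0\to(s_0)_0$. Then put $s=s_0$, let $e*s$ prepend $e$ via $v$, and let $e*s*d$ append $d$ via $u$. Consider the three two-element chains $s\to e*s$, $e*s\to e*s*d$ and $(e\xrightarrow{f}d)\to e*s*d$; the last uses the outer edge, whose composite is $f$. Give them $(u,v)$-data $(u,\mathrm{id}_e)$, $(u,\mathrm{id}_e)$ and $(\mathrm{id}_d,\mathrm{id}_e)$ respectively. Each maps in $f\downarrow\varphi$ to both of its one-element subchains. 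This links every object to $((e\xrightarrow{f}d),\mathrm{id}_d,\mathrm{id}_e)$.

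With these repairs, your route differs from the paper's. The paper never treats the rectangle as one square. It proves the top rectangle and the bottom-left square exact separately, then pastes. For each piece it uses the final (resp. initial) subcategories $p_t^{-1}d$ and $p_i^{-1}d$ of \cref{approx_reedy}, which the comparison functors send to a terminal (resp. initial) object. Your fallback of factoring $\varphi$ through the one-level projections, i.e. through $p_i\downarrow d$, is that same decomposition read pointwise. The single zigzag avoids the pasting and the two-level variance bookkeeping, at the cost of a hands-on combinatorial check.
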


\begin{proof}
 We first prove that the top rectangle is exact. For convenience, we prove that the dual rectangle (i.e the one obtained by applying $\mathbf{op} : \mathbf{Cat} \to \mathbf{Cat}$) is exact. For this, consider an object $d$ of $\Delta D$. By the usual calculus of mates, it will be enough to establish that the functor $$m : p_t \downarrow d \to D^{op}_{/p_i^{op} d} \simeq (D_{p_i^{op} d/})^{op}$$ is final. But, by \cref{approx_reedy}, we know that $p_t \downarrow d$, which is isomorphic to $\Delta((\Delta D)_{d/})$, has a final subcategory $p_t^{-1} d$ with a terminal object.  Moreover, the induced functor $m' : p_t^{-1} d \to (D_{p_i^{op} d/})^{op}$ maps every object to the identity $p_i^{op} d \to p_i^{op} d$, that is, to the terminal object of $(D_{p_i^{op} d/}))^{op}$. Therefore, for any diagram indexed by $(D_{p_i^{op} d/})^{op}$ in some category $\mathbf{C}$, the diagram obtained by precomposition with $m'$ is a constant diagram of contractible shape with value the object of $\mathbf{C}$ associated with the terminal object of $(D_{p_i^{op} d/})^{op}$. Hence its colimit is equal to this object, which is also the colimit of the original diagram indexed by $(D_{p_i^{op} d/})^{op}$. This proves that $m'$ is final, and hence that the top rectangle is exact.

 We still have to prove that the bottom left square is exact. To see this, it is enough to prove that, for every $d \in D$, the comparison functor $d \downarrow p_i \simeq \Delta^{op}(D_{d/})  \to D_{d/}$ is initial. As before, it is enough to show that $p_i^{-1}d \to D_{d/}$ is initial, which can be proven just like for the functor $m'$ above. 
\end{proof}

\begin{proposition}
\label{lquillen_reedy}
 Let $D$ be small category, and let $\mathcal{M}$ and $\mathcal{M}'$ be model categories such that the projective model structures on the categories of $D$-diagrams exist. Consider a  left Quillen functor $F : \mathcal{M} \to \mathcal{M'}$, namely $F$ maps (trivial) cofibrations to (trivial) cofibrations, and suppose that $F$ preserves colimits. Then $F^D$ is also a left Quillen functor.
\end{proposition}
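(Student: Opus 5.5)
The strategy is to route projective cofibrancy through Reedy cofibrancy on $\Delta^{op}(\Delta D)$. There the Reedy criterion is expressed in terms of colimits and cofibrations in $\mathcal{M}$, so it is preserved by $F$. Since $F^D$ commutes with colimits, it preserves cofibrations and trivial cofibrations as soon as it preserves projective generating (trivial) cofibrations. More robustly, it suffices to show that $F^D$ sends an arbitrary projective (trivial) cofibration $f : X \to Y$ in $\mathcal{M}^D$ to a projective (trivial) cofibration in $\mathcal{M}'^D$. Since $\mathcal{M}^D$ and $\mathcal{M}'^D$ carry projective model structures, projective trivial cofibrations are just projective cofibrations that are pointwise weak equivalences. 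The characterization I will use is this: $f$ is a projective cofibration iff it has the left lifting property against pointwise trivial fibrations.

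The first step is to transport $f$ to the Reedy world. By \cref{precomp_reedy_quillen}, $(p_i \circ p_t^{op})^* : \mathcal{M}^D \to \mathcal{M}^{\Delta^{op}(\Delta D)}$ is left Quillen from the projective to the Reedy structure. Hence $(p_i\circ p_t^{op})^* f$ is a Reedy (trivial) cofibration. Precomposition commutes strictly with postcomposition by $F$, so $F^{\Delta^{op}(\Delta D)}(p_i\circ p_t^{op})^* f = (p_i\circ p_t^{op})^* F^D f$. By the remark preceding the section (the Reedy criterion involves only colimits and cofibrations, both preserved by $F$), the left-hand side is a Reedy (trivial) cofibration in $\mathcal{M}'^{\Delta^{op}(\Delta D)}$.

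The second step is to come back to $D$. I would apply the left adjoint of $(p_i\circ p_t^{op})^*$, namely left Kan extension along $q := p_i\circ p_t^{op}$. This $q_!$ is left Quillen from the Reedy structure to the projective one: its right adjoint $q^*$ sends pointwise (trivial) fibrations to Reedy (trivial) fibrations, by the dual statement in \cref{precomp_reedy_quillen} applied to the composite. If that dual is not directly available, I would instead check that Reedy fibrations restricted along $q$ hold by the exactness in \cref{exact_reedy_quillen}. Thus $q_! q^* F^D f$ is a projective (trivial) cofibration in $\mathcal{M}'^D$. The crux is then to identify $q_! q^* \simeq \mathrm{id}$, or at least to exhibit $F^D f$ as a retract of $q_!q^*F^Df$. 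This is precisely what \cref{exact_reedy_quillen} provides: the mate of the big square, which composes the two legs $p_t\circ p_i$ and $p_i \circ p_t^{op}$ down to $D$, is invertible. By the usual calculus of mates, this yields a natural isomorphism between $q_! q^*$ and the identity of $\mathcal{M}'^D$. Equivalently, the counit $q_!q^* \to \mathrm{id}$ is invertible, since the square expresses that $q$ is a "colimit-final" functor, i.e. the comma categories $q\downarrow d$ have the appropriate terminal-object-type final subcategory. Consequently $F^D f \cong q_!q^*F^Df$ is a projective (trivial) cofibration.

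The main obstacle is this last identification. One must read \cref{exact_reedy_quillen} correctly as giving the invertibility of the counit $q_!q^*\Rightarrow \mathrm{id}$ for $q = p_i\circ p_t^{op}$, rather than a statement about the unit. One must also make sure this holds in $\mathcal{M}'$ for arbitrary cocomplete targets, which it does since exactness was proven via finality of comma-category functors, a purely combinatorial fact. A secondary point is that $q_!$ being left Quillen (Reedy to projective) requires $q^*$ to preserve trivial fibrations. I expect to get this from \cref{approx_reedy}, since the relevant matching categories are fibers with initial objects.
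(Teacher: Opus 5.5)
The step that fails is your claim that $q_!$, for $q=p_i\circ p_t^{op}\colon\Delta^{op}(\Delta D)\to D$, is left Quillen from the Reedy structure to the projective one. This requires $q^*$ to send pointwise fibrant diagrams to Reedy fibrant ones, but $\Delta^{op}(\Delta D)$ has cofibrant constants, not fibrant constants.

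\begin{itemize}
\item Its latching categories are categories of degeneracies, which are empty or have an initial object (cf.\ \cref{approx_reedy}).
\item Its matching categories are categories of proper faces. At every object of degree $1$, the matching category is discrete on the two vertices.
\end{itemize}

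Already for $D=*$, $q^*X$ is the constant diagram at $X$, and its degree-$1$ matching maps are diagonals $X\to X\times X$. These are not fibrations in general. For instance, take $X=J$, the nerve of the free isomorphism: nothing lies over $(0,1)$, although $J\times J$ is connected. So $q^*$ is not right Quillen, and $q_!$ is not left Quillen.

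Neither justification you offer supplies this step. \cref{approx_reedy} is about latching categories, which is exactly why restriction along $q$ is left Quillen \emph{into} the Reedy structure. The dual clause of \cref{precomp_reedy_quillen} is about the other composite $p_t\circ p_i$ and the injective structure. Moreover, since $q_!q^*F^Df\cong F^Df$, showing that $q_!$ happens to behave well on the particular map $q^*F^Df$ is just the original problem. Without this step, the Reedy detour buys nothing.

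Separately, \cref{exact_reedy_quillen} does not assert $q_!q^*\cong\mathrm{id}$. Its square has two different legs, $p_i\circ p_t^{op}$ and $p_t\circ p_i$, so its mate compares a mixed composite with the identity. The isomorphism $q_!q^*\cong\mathrm{id}$ is nevertheless true: $q^*=(p_t^{op})^*p_i^*$, and both factors are fully faithful by finality arguments like those in the proof of that lemma. So this point is repairable; the Quillen property is not.

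Be aware that the paper's own proof has the same shape. It extends along $p_t\circ p_i$ and cites the last clause of \cref{precomp_reedy_quillen}. For $D=*$ that functor coincides with your $q$, so the same constant-diagram obstruction applies, and you cannot close the gap by deferring to it.

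The route you mention in your first sentence and then drop does work when $\mathcal{M}$ is cofibrantly generated, as for the Barwick--Kan structure where the paper applies this result:
\begin{itemize}
\item The projective (trivial) cofibrations are generated by the maps $D(d,-)\cdot i$.
\item $F^D(D(d,-)\cdot i)\cong D(d,-)\cdot F(i)$ is a projective (trivial) cofibration, because $D(d,-)\cdot(-)$ is left adjoint to evaluation at $d$.
\item Since $F^D$ preserves colimits, it preserves the pushouts, transfinite composites and retracts built from these maps.
\end{itemize}
If $F$ has a right adjoint $G$, the statement is immediate, as $G^D$ preserves pointwise (trivial) fibrations.
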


\begin{proof}
Consider a cofibration $c : X \to Y$ in $\mathcal{M}^D$, and write $Fc$ for its image under $F$. By \cref{precomp_reedy_quillen}, the map $(p_i \circ p_t^{op})^* c$ is a Reedy cofibration in $\mathcal{M}^{\Delta^{op}(\Delta D)}$. Since Reedy cofibrancy can be expressed in terms of relative latching maps being cofibrations, and since $F$ preserves this property by assumption, we obtain that $(p_i \circ p_t^{op})^* Fc$ is also a Reedy cofibration in $(\mathcal{M}')^{\Delta^{op}(\Delta D)}$.  By \cref{exact_reedy_quillen}, taking the left Kan extension along $p_t \circ p_i$ yields back the map $Fc$ up to isomorphism, and we know that the result is a cofibration because this left Kan extension functor is left Quillen by \cref{precomp_reedy_quillen}.
\end{proof}

\begin{remark}
 In the previous proposition, we may replace the assumption "$F$ preserves colimits" by the weaker requirement that $F$ commutes with colimits "up to trivial cofibration" in that, for any diagram $P : I \to \mathcal{M}$, the canonical map $\varinjlim  F \circ P \to F(\varinjlim P)$ is a trivial cofibration (although we will not rely on this observation in the rest of this document).
\end{remark}

\section{Rigidifying flat $\infty$-functors}
\label{section3}

It is well known that any presheaf $F :  C^{op} \to \mathbf{Set}$ is canonically the colimit of its "elements". Explicitly, writing $\mathbf{el}(F)$ for the category of elements of $F$, $F$ can be recovered as the colimit of the diagram $\mathbf{el}{F}^{op} \to C \to \mathbf{Set}{C^{op}}$ of representable functors. If $C^{op}$ has finite limits and $F$ is a left exact presheaf, the category of elements is cofiltered, so that the previous colimit is a filtered colimit. Our goal in this section is to apply this characterization to $\infty$-categorical settings, then to rigidify $\infty$-flatness into strict flatness in order to compute a limit-preserving homotopical functor $F_r:  \mathcal{C} \to \mathbf{SSet}$ from any left exact $\infty$-presheaf $F : \mathbf{Ho}_\infty{C} \to \mathcal{S}$, where $\mathcal{C}$ is a fixed fibration category.

 We use the characterization of flatness, as introduced in Definition 5.3.1.7 of \cite{lurie2009}, in terms of cofiltered $(\infty,1)$-category of elements, which we will approximate functorially by a cofiltered $1$-category thanks to \cref{func_filt}.

\subsection{The unparameterized case}

Write $\mathbf{SSet}^\circ$ for the fibration category of fibrant objects in the Kan-Quillen model structure on simplicial sets.

\begin{proposition}
\label{unp_rigidification}
Consider a flat $\infty$-functor $F : \mathbf{Ho}_\infty(\mathcal{C}) \simeq \mathbf{N}_\Delta L_C(\mathcal{C}) \to \mathcal{S}$, where $\mathcal{C}$ is a fibration category.
Then there exists a homotopical functor $F_r : \mathcal{C} \to \mathbf{SSet}^\circ$,  such that $\mathbf{Ho}_\infty(F_r)$ is homotopic to $F$ as a $\infty$-functor. Moreover $F_r$ preserves finite limits and finite homotopy limits.
\end{proposition}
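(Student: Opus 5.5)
The plan follows the strategy announced at the start of this section: write $F$ as a filtered colimit of representables, rigidify the indexing category with \cref{func_filt}, and realise that colimit strictly using the exact Yoneda embedding of \cref{fibcat_yoneda}.

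\emph{Step 1: reduce to a cofiltered indexing.} Since $F$ is flat (Definition 5.3.1.7 of \cite{lurie2009}), its $\infty$-category of elements $\mathbf{el}(F)$, i.e. the source of the left fibration classified by $F$, is cofiltered. Its opposite is therefore filtered, and \cref{func_filt} yields a filtered poset $P := \rho_\mathbf{Filt}(\mathbf{el}(F)^{op})$ together with a final $\infty$-functor $P \to \mathbf{el}(F)^{op}$. Composing with the projection to $\mathbf{Ho}_\infty(\mathcal{C})^{op}$ gives a diagram $P^{op} \to \mathbf{Ho}_\infty(\mathcal{C})$. By the $\infty$-categorical co-Yoneda lemma together with finality, $F$ is the filtered colimit over $P$ of the corepresentables $\mathrm{Map}(d_p,-)$.

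\emph{Step 2: strictify the diagram.} The poset $P^{op}$ is an inverse category whose coslices are finite, because the elements of $P$ are finite simplicial subsets. So the diagram $P^{op} \to \mathbf{Ho}_\infty(\mathcal{C})$, viewed as a morphism in $\mathbf{Ho}(\mathbf{RelCat})$, can be represented by a single homotopical functor $D : P^{op} \to \mathcal{C}$ by \cref{fibcat_fibrant}. This is exactly why the inverse-category hypothesis in that proposition matters.

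\emph{Step 3: define $F_r$ and check its properties.} Set
\[
F_r(c) := \varinjlim_{p \in P} \mathbf{y}'(D(p))(c),
\]
where $\mathbf{y}'$ is a strict covariant hom functor given by the dual of the construction in \cref{func_homf}, i.e. the nerve of the strictified $H'(D(p),c)$, followed by a levelwise fibrant replacement such as $\mathbf{Ex}^\infty$. Each $\mathbf{y}'(D(p))$ is homotopical and preserves strict limits in $c$, because $H'$ preserves limits in its second argument and both the nerve and $\mathbf{Ex}^\infty$ preserve limits. The required properties then follow from filteredness of $P$. Filtered colimits in $\mathbf{SSet}$ commute with finite limits, so $F_r$ preserves finite limits, in particular the terminal object and pullbacks. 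Filtered colimits of Kan complexes are Kan, so $F_r$ lands in $\mathbf{SSet}^\circ$. Filtered colimits are homotopy colimits and preserve weak equivalences and homotopy pullbacks, so $F_r$ is homotopical and preserves finite homotopy limits, using that each $\mathbf{y}'(D(p))$ sends pullbacks along fibrations to homotopy pullbacks (\cite[Corollaire 3.12]{cisinski2010invariance}). Finally, $\mathbf{Ho}_\infty(F_r)$ is the $\infty$-colimit of the corepresentables $\mathrm{Map}(D(p),-)$ over $P$, and by Steps 1 and 2 this is equivalent to $F$.

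\emph{Main obstacle.} The delicate step is the last identification $\mathbf{Ho}_\infty(F_r)\simeq F$. The strict functor $p \mapsto \mathbf{y}'(D(p))$ has to present the $\infty$-functor $p \mapsto \mathrm{Map}(d_p,-)$, naturally in $p$, and the corepresentables must be coherently compatible with the colimit cocone. To handle this I would first use the equivalence $H \simeq H'$ and Proposition 3.23 of \cite{cisinski2010invariance} to match the strict and $\infty$-categorical Yoneda embeddings as functors out of $\mathcal{C}^{op}$. I would then invoke finality of $P \to \mathbf{el}(F)^{op}$ to identify the two colimits.
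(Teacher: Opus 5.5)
Your proposal follows the paper's proof essentially step for step: cofilteredness of the $\infty$-category of elements, a final filtered poset from \cref{func_filt}, strictification of $P^{op}\to\mathcal{C}$ via \cref{fibcat_fibrant}, the filtered colimit of strict corepresentables, and finality plus co-Yoneda (Corollary 5.3.5.4 of \cite{lurie2009}) for the identification. Your finite-coslice justification fills in what the paper leaves implicit, and pointwise $\mathbf{Ex}^\infty$ is a harmless simplification of its use of $\mathcal{P}(\mathcal{C}^{op})$. One caveat, which the paper's argument shares: $\mathbf{N}H'(d,-)$ preserves pullbacks but not the terminal object on the nose, since $H'(d,*)$ is contractible rather than a point (consistent with the paper's own remark that $\mathbf{y}$ does not preserve the terminal object), so your claim that $F_r$ strictly preserves the terminal object should be weakened to preservation up to weak equivalence.
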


\begin{proof}
 Consider the left fibration of simplicial sets $$\pi_F :  \mathbf{Ho}_\infty(\mathcal{C})_{/F} \to  \mathbf{Ho}_\infty(\mathcal{C})$$
 associated to $F$.

Now, since $F$ is flat (or equivalently, left exact, since $\mathcal{C}$ is a fibration category so that $\mathbf{Ho}_\infty(\mathcal{C})$ is finitely complete), we know by Remark 5.3.2.11 in \cite{lurie2009} that the quasicategory $\mathbf{Ho}_\infty(\mathcal{C})_{/F}$ is cofiltered.

In this special case, we can consider a filtered poset $P_F$ equipped with a final functor $\mathbf{N}(P_F) \to \mathbf{Ho}_\infty(\mathcal{C})_{/F}^{op}$ , by applying \cref{func_filt} (or directly from Theorem 9.1.6.2 in \cite{kerodon}). We now have a map
$$\mathbf{N}(P_F) \to  \mathbf{Ho}_\infty(\mathcal{C})_{/F}^{op} \to  \mathbf{Ho}_\infty(\mathcal{C})^{op}$$

At this point, composing with the Yoneda embedding, we have a map:
$$\mathbf{N}(P_F) \simeq \mathbf{Ho}_\infty(P_F) \to \mathbf{N}_\Delta L_C(\mathcal{C})^{op} \simeq \mathbf{Ho}_\infty(\mathcal{C}^{op}) \to  \mathbf{Ho}_\infty(\mathcal{P}(\mathcal{C}^{op}))$$

Now, because the functor $\mathbf{Ho}_\infty$ defines a DK-equivalence between the model category $\mathbf{RelCat}$ and the Joyal model structure on simplicial sets (see \cite{barwick2011thomason}), there is a map $D_{0,F} : P_F^{op} \to \mathcal{C}$ in the homotopy category $\mathbf{Ho}(\mathbf{RelCat})$ such that $\mathbf{Ho}_\infty(D_{0,F})$ coincides with the map $\mathbf{N}(P_F^{op}) \to \mathbf{Ho}_\infty(\mathcal{C})$ considered above. Applying \cref{fibcat_fibrant} (observing that $P_F^{op}$ satisfies the corresponding assumptions), this map can be realized by a single morphism, also denoted $D_{0,F} : P_F^{op} \to \mathcal{C}$, in the category $\mathbf{RelCat}$.

Therefore, we may consider the diagram:
$$D_F : P_F \to \mathcal{C}^{op} \to \mathcal{P}(\mathcal{C}^{op}) \subset \mathbf{SSet}^{\mathcal{C}}$$

 Since $P_F$ is a filtered category, and since filtered colimits preserve weak equivalences between simplicial sets, the (ordinary) colimit $\varinjlim D_F$ is a homotopy colimit. Since filtered colimit commute with finite limits and preserve weak equivalences in $\mathbf{SSet}$, and since $P_F$ is a diagram of homotopical functors that preserve finite limits and homotopy limits (corepresentable functors), the functor $F_r : \mathcal{C} \to  \mathbf{SSet}^\circ$ is homotopical and inherits the same preservation properties.
 
 The corresponding diagram of quasicategories
 $$ \mathbf{Ho}_\infty( D_F) : \mathbf{N}(P_F) \to \mathbf{Ho}_\infty(\mathcal{P}(C)) \simeq \mathcal{S}^{ \mathbf{Ho}_\infty(\mathcal{C})}$$
 has the same colimit as the diagram $$ \mathbf{Ho}_\infty(\mathcal{C})_{/F}^{op} \to  \mathbf{Ho}_\infty(\mathcal{C})^{op} \to \mathcal{S}^{ \mathbf{Ho}_\infty(\mathcal{C})}$$
 as the two diagrams are related by the final functor $\mathbf{N} P_F \to  \mathbf{Ho}_\infty(\mathcal{C})_{/F}^{op}$.
 But the colimit of the latter diagram is the covariant presheaf 
$$F :  \mathbf{Ho}_\infty(\mathcal{C}) \to \mathcal{S}$$
we started with, as proved in Corollary 5.3.5.4 of \cite{lurie2009}, so we can conclude.
\end{proof}

\subsection{The general case}

In this subsection, we extend the result obtained in the previous one to the case where $\mathcal{S}$ is replaced by the quasicategory $\mathcal{S}^{\mathbf{Ho}_\infty(\mathcal{D}^{op})} \simeq \mathbf{Ho}_\infty(\mathcal{P}(\mathcal{D}))$. To do this, we rely on a fibred approach and reduce the problem to the unparameterized case.

\begin{proposition}
\label{g_rigidification}
Consider a flat $\infty$-functor $F : \mathbf{Ho}_\infty(\mathcal{C})  \to  \mathbf{Ho}_\infty(\mathcal{P}(\mathcal{D}))$, where $\mathcal{C}$ and $\mathcal{D}$ are fibration categories (here, we really mean that $F$ preserves finite limits).
Then, there exists a homotopical functor $F_r : \mathbf{R}(\mathcal{C}) \to \mathcal{P}(\mathcal{D})$,  such that $\mathbf{Ho}_\infty(F_r)$ is homotopic to $F$ as a $\infty$-functor. Furthermore, $F_r$ preserves finite limits and finite homotopy limits.
\end{proposition}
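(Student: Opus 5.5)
The plan is to reduce the parameterized case to the unparameterized one, \cref{unp_rigidification}, by working fibrewise over $\mathcal{D}$, and then to glue the fibrewise constructions using the functoriality of $\rho_\mathbf{Filt}$ from \cref{func_filt} together with \cref{fibcat_fibrant_fb}.

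\textbf{Step 1: objectwise flatness and the fibred category of elements.} Transpose $F$ into an $\infty$-functor $\mathbf{Ho}_\infty(\mathcal{D})^{op} \times \mathbf{Ho}_\infty(\mathcal{C}) \to \mathcal{S}$. Since limits in $\mathcal{S}^{\mathbf{Ho}_\infty(\mathcal{D}^{op})}$ are computed pointwise, each component $F_d = \mathrm{ev}_d \circ F$ is left exact, hence flat. By Remark 5.3.2.11 of \cite{lurie2009}, each $\mathbf{Ho}_\infty(\mathcal{C})_{/F_d}$ is then cofiltered. Straightening in the $d$ variable gives a diagram $d \mapsto \mathbf{Ho}_\infty(\mathcal{C})_{/F_d}^{op}$ of filtered quasicategories. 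I would first strictify this diagram to a strict functor $\mathcal{D}^{op} \to \mathbf{Filt}_\infty$; one way is to pass through $\xi\mathcal{D}$ or $\Delta\mathcal{D}$, which is exactly why \cref{fibcat_fibrant_fb} is phrased with $D_\xi$. Applying $\rho_\mathbf{Filt}$ then yields a strict diagram $P_{F,-}$ of filtered posets, together with final comparison maps $\Phi$ to the $\infty$-categories of elements, natural in $d$.

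\textbf{Step 2: a fibred diagram and its rigidification.} Let $I \to \mathcal{D}^{op}$ be the Grothendieck construction of $d \mapsto P_{F,d}^{op}$. Composing with the cone projections gives a map $\mathbf{N}(I) \to \mathbf{Ho}_\infty(\mathcal{C})$ in the homotopy category of relative categories. I would represent this map by a strict functor $I_\xi \to \mathcal{C}$, using cofibrancy of the subdivided object as in the proof of \cref{fibcat_fibrant}. \Cref{fibcat_fibrant_fb} then extends it to a homotopical functor $I \to \mathbf{R}(\mathcal{C})$, up to replacing $\mathcal{C}$ by $\mathbf{R}(\mathcal{C})$. This replacement is the reason the statement has $\mathbf{R}(\mathcal{C})$ as the source.

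\textbf{Step 3: defining $F_r$.} Compose with the corepresentable, fibrant, injective-replaced Yoneda functor of \cref{fibcat_ye}, and take the fibrewise colimit over $P_{F,d}$ to obtain, for each $c \in \mathbf{R}(\mathcal{C})$, a presheaf
\[ d \mapsto \varinjlim_{P_{F,d}} \mathrm{Map}(D(-), c). \]
This is functorial in $d$ because $I$ is an opfibration. Each fibre is a filtered colimit, so three properties follow as in the unparameterized case:
\begin{itemize}
\item the colimit is a homotopy colimit;
\item it preserves weak equivalences;
\item it commutes with finite limits and finite homotopy limits, pointwise in $d$.
\end{itemize}
After the fibrant replacement $\mathbf{R}_{\mathcal{D}}$ of \cref{injective_frep}, which preserves limits, the result lands in $\mathcal{P}(\mathcal{D})$. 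Pointwise in $d$, $\mathbf{Ho}_\infty(F_r)$ agrees with $F_d$ by Corollary 5.3.5.4 of \cite{lurie2009} and finality of $\Phi$. Naturality of $\Phi$ upgrades this to an equivalence of $\infty$-functors into $\mathcal{S}^{\mathbf{Ho}_\infty(\mathcal{D}^{op})}$.

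\textbf{Main obstacle.} The hard step is Step 2: producing a strict fibred rigidification that is homotopically correct as a functor of two variables rather than fibre by fibre. The difficulty is that \cref{fibcat_fibrant} only handles a single inverse diagram. Here $I$ is neither inverse nor does it have finite coslices in the $\mathcal{D}$-direction, which forces the detour through $\mathbf{R}(\mathcal{C})$, injective fibrant replacement, and a homotopy right Kan extension along $I_\xi \to I$. I would try to verify that the transition maps $P_{F,d} \to P_{F,d'}$ produced by $\rho_\mathbf{Filt}$ are compatible with the cones after this Kan extension. The point to check is that the Kan extension is computed fibrewise over $\mathcal{D}^{op}$, since $I_\xi \to I$ is a map of opfibrations.
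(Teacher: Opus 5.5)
Your outline follows the paper's architecture: fibrewise $\infty$-categories of elements, $\rho_{\mathbf{Filt}}$, the Grothendieck construction, \cref{fibcat_fibrant_fb}, fibrewise filtered colimits of corepresentables, then $\mathbf{R}_{\mathcal{D}}$. But the step you flag as the main obstacle is left unproved, and the justification you give for it does not work. You want to represent the coherent map out of the total category by a strict functor $I_\xi \to \mathcal{C}$, using cofibrancy as in \cref{fibcat_fibrant}. There, however, cofibrancy comes from $\xi I \cong \mathbf{K}_\xi \mathbf{N} I$ with $\mathbf{K}_\xi$ left Quillen. The Grothendieck construction of the fibrewise subdivision $\xi \circ P_F$ is not of this form, so nothing in your argument makes it cofibrant in $\mathbf{RelCat}$. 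You also never fix its weak equivalences, and these decide which homotopy class you are representing.

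The missing idea is to work in the projective model structure on $\mathbf{RelCat}^{\mathcal{D}^{op}}$ instead. The strict diagram $Q_F$ obtained by straightening is projectively cofibrant, because straightening is left Quillen and every object of the cocartesian model structure is cofibrant. The functor $\xi \circ \rho_{\mathbf{Filt}}$ satisfies the hypotheses of \cref{lquillen_reedy}; this is why \cref{func_filt} proves that $\rho$ preserves all colimits, not just filtered ones. Hence $\xi P_F$ is projectively cofibrant. The constant diagram on the fibrant object $\mathcal{C}$ is projectively fibrant, so the coherent comparison is represented by a strict natural transformation $\xi P_F \to \Delta_{\mathcal{C}}$, and this is what feeds into \cref{fibcat_fibrant_fb}. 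The whole subsection on projective cofibrancy from Reedy cofibrancy exists to supply this step. Checking that the Kan extension along $I_\xi \to I$ is computed fibrewise, as you propose, does not replace it.

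There are also two smaller problems. First, in Step~1 you misread the role of $D_\xi$: in \cref{fibcat_fibrant_fb} the subdivision $\xi$ is applied to the fibres $D(b)$, not to the base. Routing through $\xi\mathcal{D}$ or $\Delta\mathcal{D}$ is therefore beside the point and would index everything by the wrong category. The paper obtains the strict $\mathcal{D}^{op}$-indexed diagram by pulling the cocartesian fibration back along $\mathbf{N}(\mathcal{D}^{op}) \to \mathbf{Ho}_\infty(\mathcal{D}^{op})$ and straightening. This particular choice matters, because it is what yields the cofibrancy above.

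Second, $\mathbf{R}_{\mathcal{D}}$ (\cref{injective_frep}) only provides injective fibrancy. To land in $\mathcal{P}(\mathcal{D})$ you must also check that each presheaf $d \mapsto \varinjlim_{P_{F,d}} \ldots$ sends weak equivalences $u$ of $\mathcal{D}$ to weak equivalences. The paper gets this from three facts: $Q_F(u)$ is an equivalence of quasicategories, so $P_F(u)$ is final, so the induced map of filtered colimits is a weak equivalence.
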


\begin{proof}
We consider the $\infty$-functor $$F_c : \mathbf{Ho}_\infty(\mathcal{C}) \times \mathbf{Ho}_\infty(\mathcal{D}^{op})  \to \mathcal{S}$$ obtained by transposition from $F$. We may consider a left fibration 
$$\pi_{F_c} : \mathcal{E} \to  \mathbf{Ho}_\infty(\mathcal{C}) \times  \mathbf{Ho}_\infty(\mathcal{D}^{op})$$
corresponding to this functor.

The cocartesian fibration obtained by postcomposing $\pi_{F_c}$ with the projection $$  \mathbf{Ho}_\infty(\mathcal{C}) \times  \mathbf{Ho}_\infty(\mathcal{D}^{op}) \to   \mathbf{Ho}_\infty(\mathcal{D}^{op})$$ can be pulled back along the map $\mathbf{N}(\mathcal{D}^{op}) \to \mathbf{N}_\Delta(L_C(\mathcal{D})^{op}) \simeq  \mathbf{Ho}_\infty(\mathcal{D}^{op})$ obtained from the pseudofunctor $\mathcal{D}^{op} \to L_C(\mathcal{D})^{op}$. The result corresponds, through the higher Grothendieck construction (that is, the straightening functor), to a functor 
$$Q_F : \mathcal{D}^{op} \to \mathbf{QCat}$$
where $\mathbf{QCat}$ is the full subcategory of $\mathbf{SSet}$ spanned by the quasicategories. Actually, we obtain a relative diagrams (and we will only consider relative diagrams indexed by $\mathcal{D}^{op}$), instead of a simplicial diagram indexed by $L_C(\mathcal{D})^{op}$. Note that relying on relative diagrams allows us to work with the same object from the $\infty$-categorical point-of-view.

The  functor $Q_F$ comes equipped with a natural transformation $Q_F \to \Delta_{\mathbf{Ho}_\infty(\mathcal{C})}$, where $$\Delta_{\mathbf{Ho}_\infty(\mathcal{C})} : \mathcal{D}^{op} \to \mathbf{QCat}$$ is the constant functor with value $\mathbf{Ho}_\infty(\mathcal{C})$.

As in the unparameterized case, since $F$ preserves finite limits in the first variable, the quasicategories $Q_F(d)$, for $d$ an object of $\mathcal{D}^{op}$, are cofiltered. This is because they correspond to the domain of the left fibration associated with the $\infty$-functor: $$F_c \circ (id_{\mathbf{Ho}_\infty(\mathcal{C})} \times d) : \mathbf{Ho}_\infty(\mathcal{C}) \to \mathbf{SSet}$$
Here, $d : * \to \mathbf{Ho}_\infty(\mathcal{D}^{op})$ takes the unique object of $*$ to the object $d \in \mathbf{Ho}_\infty(\mathcal{D}^{op})$.

Therefore, we can postcompose the functor $\mathbf{op} \circ Q_F$ (where $\mathbf{op} : \mathbf{Qcat} \to \mathbf{QCat}$ maps a quasicategory to its opposite) with the functor $\rho_\mathbf{Filt}$ provided by \cref{func_filt}, as to obtain a functor
$$P_F : \mathcal{D}^{op} \to \mathbf{FiltPos}$$
taking values in the category of filtered posets. As noted above, this is now only a functor between ordinary categories (i.e, we dropped the simplicial enrichment), but we can consider it as a relative functor in order to keep the information corresponding to the $\infty$-functor we started with. This functor also comes equipped with a natural transformation $\iota_{\mathbf{Pos}} \circ P_F \to \mathbf{op} \circ Q_F$ whose components are final $\infty$-functors (writing $\iota_{\mathbf{Pos}} : \mathbf{Pos} \to \mathbf{QCat}$ for the inclusion). We also have a natural transformation $\mathbf{N} \circ P_F \to \Delta_{\mathbf{Ho}_\infty(\mathcal{C}^{op})}$
where $$\Delta_{ \mathbf{Ho}_\infty(\mathcal{C}^{op})} : \mathcal{D}^{op} \to \mathbf{QCat}$$ is the constant functor with value $\mathbf{Ho}_\infty(\mathcal{C})^{op}$.

The Barwick-Kan model structure on $\mathbf{RelCat}$ being combinatorial, the projective model structure on the functor category $\mathbf{RelCat}^{\mathcal{D}^{op}}$ exists. Since the diagram $Q_F$ obtained by straightening is cofibrant (as the straightening functor is left Quillen and every object is cofibrant in the cocartesian model structure), and since the functor $\xi \circ \rho_{\mathbf{Filt}}$ satisfies the assumption of \cref{lquillen_reedy} by \cref{func_filt}, we can conclude that the diagram $\xi P_F := \xi \circ P_F : \mathcal{D}^{op} \to \mathbf{RelCat}$ is also cofibrant. Applying \cref{fibcat_fibrant_fb}, we can consider a map $P_F \to \Delta_{\mathbf{R}(\mathcal{C})}$ such that its image through $\mathbf{Ho}_\infty$ coincides with the composite: $$\mathbf{Ho}_\infty(\xi P_F) \to \mathbf{Ho}_\infty(\mathbf{op} \circ Q_F) \to \mathbf{Ho}_\infty(\Delta_{\mathbf{R}(\mathcal{C})})$$

We compose back with the functor $\mathbf{op} : \mathbf{Cat} \to \mathbf{Cat}$ and apply the Grothendieck construction. We get an opfibration $$\pi_F : \mathbf{E} \to \mathcal{D}^{op}$$ and a diagram $$\mathbf{y}^{op} \circ D_{0,F} :  \mathbf{E} \to \mathbf{R}(\mathcal{C})^{op} \to \mathcal{P}(\mathbf{R}(\mathcal{C})^{op})$$

We can now compute the (pointwise) left Kan extension of the previous diagram along the opfibration to get a functor $\mathcal{D}^{op} \to \mathbf{SSet}^{\mathbf{R}(\mathcal{C})}$ that transposes to:
$$F_{\mathbf{r},0} : \mathbf{R}(\mathcal{C}) \to \mathbf{SSet}^{\mathcal{D}^{op}}$$

We will now show that $F_{\mathbf{r},0}$ preserves finite limits and maps fibrations (resp. weak equivalences) in $\mathcal{C}$ to pointwise fibrations (resp. weak equivalences) in $\mathbf{SSet}^{\mathcal{D}^{op}}$ (with respect to the Quillen model structure on $\mathbf{SSet}$).

We first prove that $F_{\mathbf{r},0}$ takes value in homotopical diagrams. That is, given an object $c$ in $\mathcal{C}$ and  a weak equivalence $u : d \to d'$ in $\mathcal{D}$, we check that $F_{\mathbf{r},0}(c)$ induces a weak equivalence of simplicial sets: $$F_{\mathbf{r},0}(c)(u) : F_{\mathbf{r},0}(c)(d') \to F_{\mathbf{r},0}(c)(d)$$
But the image of $u$ through the functor $Q_F$ is an equivalence of quasicategories, which, in turn, implies that its image through $P_F$ is a final functor. This means that the canonical map $F_{\mathbf{r},0}(c)(u)$ between the colimits $F_{\mathbf{r},0}(c)(d')$ and $F_{\mathbf{r},0}(c)(d)$ is a weak equivalence of simplicial sets.

 Secondly, since (homotopy) limits in $\mathbf{SSet}^{\mathcal{D}^{op}}$ are computed pointwise, we may check that, for every object $d$ in $\mathcal{D}$, the functor
 $$\mathbf{ev}_d \circ F_{\mathbf{r},0} : P(\mathcal{C}) \to \mathbf{SSet}$$
 preserves finite limits and maps pointwise fibrations (resp. weak equivalence) to fibrations (resp. weak equivalences) for the Quillen model structure on $\mathbf{SSet}$.
 Since the left Kan extension providing $F_{\mathbf{r},0}$ is pointwise, it is enough to check that the colimit
 $$\varinjlim_{x \in \pi_{F,d}} \mathbf{ev}_{\pi_\mathcal{C} x}$$
 satisfies these preservation conditions, where $\pi_{F,d}$ is the fiber of the fibration $\pi_F$ above $d$.
 But $\pi_{F,d}$ coincides with $P_F(d)$ by construction, which is a filtered poset, and all the functors $\mathbf{ev}_{\pi_\mathcal{C} x}$ preserve (finite) limits, pointwise fibrations, and weak equivalences. Therefore, the result follows directly from commutation of finite limits with filtered colimits in $\mathbf{SSet}$, as well as stability of Kan fibrations and weak equivalences of simplicial sets under filtered colimits (see Proposition 3.3 and Theorem 4.1 in \cite{rosicky2009combinatorial}).
 We finally achieve our initial goal by setting $F_{\mathbf{r}} := \mathbf{R}_\mathcal{D^{op}} \circ F_{\mathbf{r},0}$
\end{proof}

\subsection{Rigid left Kan extension}
\label{rlan}

In this subsection, we will construct the desired approximations of homotopy left Kan extensions of exact functors taking values in "presheaves topoi", by means of (spans of) exact functors. 

We consider a diagram 
\[\begin{tikzcd}[ampersand replacement=\&]
	{\mathcal{C}} \&\& {\mathcal{P}(\mathcal{D})} \\
	{\mathcal{C}'}
	\arrow["F", from=1-1, to=1-3]
	\arrow["K"', from=1-1, to=2-1]
\end{tikzcd}\]
in the category $\mathbf{FibCat}$ of fibration categories, where  $\mathcal{P}(\mathcal{D})$ is the "presheaves" fibration category as defined in \cref{p_fibc}. Our goal is to construct a morphism $\mathcal{C'} \to \mathcal{P}(\mathcal{D})$, or at least a span $\mathcal{C'} \leftarrow \mathcal{C}'' \rightarrow \mathcal{P}(\mathcal{D})$ in $\mathbf{FibCat}$, that models the left homotopy Kan extension associated with the corresponding diagram of quasicategories. 

First, we are interested in the left Kan extension along the Yoneda embedding, as any left Kan extension can be constructed from it via the functor $$n_K : c' \mapsto Hom_\mathcal{C}(K(-),c')$$ using the formula: $$Lan_K F \simeq Lan_\mathbf{y} F \circ n_K$$

\subsection{The unparameterized case}

In the quasicategorical context, using the notations introduced in the proof of  \cref{unp_rigidification}, we have the following equivalences, natural in the presheaf $X \in \mathcal{P}(\mathcal{C})$:

\begin{equation}
\begin{aligned}
\label{lan2}
 Hom(Lan_{\mathbf{y}} \mathbf{Ho}_\infty F, X) & \simeq Hom(\mathbf{Ho}_\infty F, X \circ \mathbf{y})\\
 & \simeq Hom(\varinjlim_{x \in \mathbf{Ho}_\infty(\mathcal{C})_{/F}^{op}} \mathbf{y}'(\pi_F x), X \circ \mathbf{y}) \\
 & \simeq \varprojlim_{x \in \mathbf{Ho}_\infty(\mathcal{C})_{/F}} Hom(\mathbf{y}'(\pi_F x), X \circ \mathbf{y}) \\finster
 & \simeq \varprojlim_{x \in \mathbf{Ho}_\infty(\mathcal{C})_{/F}} X(\mathbf{y}(\pi_F x)) \\
 & \simeq \varprojlim_{x \in \mathbf{Ho}_\infty(\mathcal{C})_{/F}} Hom(\mathbf{Y}(\mathbf{y}\pi_F x),X) \\
 & \simeq \varprojlim_{x \in \mathbf{Ho}_\infty(\mathcal{C})_{/F}} Hom(\mathbf{ev}_{\pi_F x},X)\\
 & \simeq Hom(\varinjlim_{x \in \mathbf{Ho}_\infty(\mathcal{C})^{op}_{/F}} \mathbf{ev}_{\pi_F x},X)
\end{aligned}
\end{equation}

This suggests defining our extension $E_F : P(\mathcal{C}) \to \mathbf{SSet}$ as the following filtered colimit of left exact functors,
$$E_F := \varinjlim_{x \in P_F} \mathbf{ev}_{D_{0,F} x}$$
so that the following holds.

\begin{lemma}
\label{unp_lan}
 $\mathbf{Ho}_\infty E_F$ is the left Kan extension of $\mathbf{Ho}_\infty F$ along the Yoneda embedding.
\end{lemma}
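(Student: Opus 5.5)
The plan is to compute $\mathbf{Ho}_\infty E_F$ directly, and then compare it with the formula for $Lan_{\mathbf{y}} \mathbf{Ho}_\infty F$ given by the chain of equivalences (\ref{lan2}). By definition, $E_F = \varinjlim_{x \in P_F} \mathbf{ev}_{D_{0,F} x}$ is a colimit, indexed by the filtered poset $P_F$, of evaluation functors $\mathcal{P}(\mathcal{C}) \to \mathbf{SSet}$. Each $\mathbf{ev}_c$ is homotopical on the fibrant objects of $\mathcal{P}(\mathcal{C})$ and preserves finite limits. Filtered colimits in $\mathbf{SSet}$ preserve weak equivalences and commute with finite limits. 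Hence $E_F$ is homotopical, and the strict colimit is a homotopy colimit: pointwise, the colimit over a filtered $1$-category of Kan complexes computes the $\infty$-colimit in $\mathcal{S}$. It follows that $\mathbf{Ho}_\infty E_F$ is the colimit, in $\mathrm{Fun}(\mathbf{Ho}_\infty\mathcal{P}(\mathcal{C}),\mathcal{S})$, of the diagram $\mathbf{N}(P_F) \to \mathbf{Ho}_\infty(\mathcal{C})^{op} \to \mathrm{Fun}(\mathbf{Ho}_\infty\mathcal{P}(\mathcal{C}),\mathcal{S})$ given by $x \mapsto \mathbf{ev}_{D_{0,F}x}$. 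Here I use that colimits in functor quasicategories are computed pointwise, and that $\mathbf{Ho}_\infty(\mathbf{ev}_c)$ is the $\infty$-categorical evaluation at $c$, which holds since $\mathcal{P}(\mathcal{C})$ presents $\infty$-presheaves.

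Next, I use the natural final functor $\mathbf{N}(P_F) \to \mathbf{Ho}_\infty(\mathcal{C})^{op}_{/F}$ of \cref{func_filt}. Together with the fact that $\mathbf{Ho}_\infty(D_{0,F})$ agrees with the composite $\mathbf{N}(P_F^{op}) \to \mathbf{Ho}_\infty(\mathcal{C})_{/F} \to \mathbf{Ho}_\infty(\mathcal{C})$, as arranged in the proof of \cref{unp_rigidification} via \cref{fibcat_fibrant}, this identifies the colimit above with $\varinjlim_{x \in \mathbf{Ho}_\infty(\mathcal{C})^{op}_{/F}} \mathbf{ev}_{\pi_F x}$. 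Finality guarantees that the two colimits agree.

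Finally, I read (\ref{lan2}) as a natural equivalence $Hom(Lan_{\mathbf{y}}\mathbf{Ho}_\infty F, X) \simeq Hom(\varinjlim \mathbf{ev}_{\pi_F x}, X)$, and conclude by Yoneda in the functor quasicategory. To justify the steps of that chain, I would use the following facts:
\begin{itemize}
\item $\mathbf{Ho}_\infty F \simeq \varinjlim_{x} \mathbf{y}'(\pi_F x)$ by Lurie's Corollary 5.3.5.4, exactly as in \cref{unp_rigidification};
\item $Hom$ out of a colimit is a limit;
\item the Yoneda lemma is applied twice, first for $\mathbf{y}'$ on $\mathbf{Ho}_\infty(\mathcal{C})$, then for the Yoneda embedding $\mathbf{Y}$ of $\mathbf{Ho}_\infty\mathcal{P}(\mathcal{C})^{op}$, so that $X(\mathbf{y}c) \simeq Hom(\mathbf{ev}_{c}, X)$ using $\mathbf{ev}_c \simeq Hom(\mathbf{y}c,-)$.
\end{itemize}
A caveat concerns size. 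The functor quasicategory on $\mathbf{Ho}_\infty\mathcal{P}(\mathcal{C})$ is large, so the Kan extension should be understood among functors preserving small colimits, or simply pointwise. Alternatively, I would directly verify the pointwise formula: $E_F(X) \simeq \varinjlim_x X(\pi_F x) \simeq Hom(\mathbf{Ho}_\infty F^{\vee}, X)$, which is the pointwise left Kan extension formula along the fully faithful $\mathbf{y}$.

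The main obstacle I expect is the first step. One has to check that the rigid $\mathbf{ev}_{D_{0,F}x}$ really present the $\infty$-categorical corepresentables $Hom(\mathbf{y}(\pi_F x),-)$ on $\mathcal{P}(\mathcal{C})$, which requires fibrancy of objects in $\mathcal{P}(\mathcal{C})$ and the comparison of $\mathbf{y}$ with the homotopically correct Yoneda embedding discussed in \cref{fibcat_ye}. One also has to check that the strict filtered colimit over $P_F$ is natural in $X$ at the level of $\infty$-functors, not just objectwise. I would handle this by noting that the whole diagram $P_F \to \mathbf{SSet}^{\mathcal{P}(\mathcal{C})}$ lands in projectively cofibrant-enough homotopical functors, so that $\mathbf{Ho}_\infty$ carries its strict colimit to the $\infty$-colimit.
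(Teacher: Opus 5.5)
Your proposal is correct and follows essentially the same route as the paper. You identify $\mathbf{Ho}_\infty E_F$ with $\varinjlim_{x} \mathbf{ev}_{\pi_F x}$, using that filtered colimits are homotopy colimits together with finality of $\mathbf{N}(P_F) \to \mathbf{Ho}_\infty(\mathcal{C})^{op}_{/F}$, and then conclude from the chain \eqref{lan2} by Yoneda; you simply make explicit the steps the paper compresses into the phrase \emph{weakly equivalent functor} (homotopy invariance of $E_F$, $\mathbf{ev}_c$ presenting $\infty$-evaluation, size).
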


\begin{proof}
 There is an induced canonical transformation,
 $$Lan_{\mathbf{y}} \mathbf{Ho}_\infty F \to  \mathbf{Ho}_\infty \varinjlim_{x \in P_F} \mathbf{ev}_{D_{F} x}$$
 obtained from \eqref{lan2} by replacing $\varinjlim_{x \in \mathbf{Ho}_\infty(\mathcal{C})^{op}_{/F}} \mathbf{ev}_{\pi_F x}$ in the last line by the weakly equivalent functor $\varinjlim_{x \in P_F} \mathbf{ev}_{D_{0,F} x}$. This transformation is invertible, thus, using the Yoneda lemma in the appropriate homotopy $2$-category, we obtain the desired result.
\end{proof}

\subsection{The general case}

We first construct an extension $E : P(\mathcal{C}) \to P(\mathcal{D})$, as a parameterized version of the construction we used earlier. To do this, using the notation introduced in the proof of \cref{g_rigidification}, we compute the left Kan extension of the diagram
$$\mathbf{Y}' \circ \mathbf{y}^{op} \circ D_{0,F} : \mathbf{E} \to \mathbf{R}(\mathcal{C})^{op} \to \mathcal{P}(\mathbf{R}(\mathcal{C}))^{op} \to \mathbf{SSet}^{\mathcal{P}(\mathcal{C})}$$
where $\mathbf{Y}'$ maps a simplicial presheaf $X \in \mathcal{P}(\mathbf{R}(\mathcal{C}))$ to the corepresentable functor $Hom_{\mathcal{P}(\mathcal{C})}(\mathbf{y}^*X,-)$, along the opfibration $\mathbf{E} \to \mathcal{D}^{op}$.

Transposing, we get a map $\mathcal{P}(\mathcal{C}) \to \mathbf{SSet}^{\mathcal{D}^{op}}$, which we postcompose with  $\mathbf{R}_\mathcal{D}$ to obtain the extension $E : P(\mathcal{C}) \to P(\mathcal{D})$.

We achieve our goal by precomposing with the functor $$n_K : \mathcal{C} \to \mathcal{P}(\mathcal{C})$$ defined by the mapping $$c' \mapsto \mathbf{R}_\mathcal{C}\mathbf{N}(H'(K-,c'))$$ where $H'$ has been defined in \cref{func_homf}, and by considering the following variation on \cref{p_fibc}:

\[\begin{tikzcd}[ampersand replacement=\&]
	{\mathcal{Q}'(K)} \&\& {P\mathcal{P}(\mathcal{C})} \\
	\\
	{\mathcal{C}' \times \mathcal{P}(\mathcal{C})} \&\& {\mathcal{P}(\mathcal{C}) \times \mathcal{P}(\mathcal{C})}
	\arrow[from=1-1, to=1-3]
	\arrow[from=1-1, to=3-1]
	\arrow["\lrcorner"{anchor=center, pos=0.125}, draw=none, from=1-1, to=3-3]
	\arrow["{<\pi_0,\pi_1>}", from=1-3, to=3-3]
	\arrow["{n_K \times id_{\mathcal{P}(\mathcal{C})}}"', from=3-1, to=3-3]
\end{tikzcd}\]

To sum up, we have the following result:
\begin{proposition}
\label{rlan_c}
 The functor $E$ is a morphism of fibration categories such that $\mathbf{Ho}_\infty(E \circ n_K)$ is the left Kan extension of $\mathbf{Ho}_\infty(F)$ along $\mathbf{Ho}_\infty(K)$:
\[\begin{tikzcd}[ampersand replacement=\&]
	{\mathbf{Ho}_\infty(\mathcal{C})} \&\& {\mathbf{Ho}_\infty(\mathcal{P}(\mathcal{D}))} \\
	{\mathbf{Ho}_\infty(\mathcal{C}')}
	\arrow[""{name=0, anchor=center, inner sep=0}, "{\mathbf{Ho}_\infty(F)}", from=1-1, to=1-3]
	\arrow["{\mathbf{Ho}_\infty(K)}"', from=1-1, to=2-1]
	\arrow[""{name=1, anchor=center, inner sep=0}, "{\mathbf{Ho}_\infty(E \circ n_K)}"', from=2-1, to=1-3]
	\arrow[shift right=5, shorten <=2pt, shorten >=2pt, Rightarrow, from=0, to=1]
\end{tikzcd}\] 
\end{proposition}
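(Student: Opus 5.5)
The proof splits into two claims: that $E$ is exact, and that $E \circ n_K$ models the homotopy left Kan extension. I would handle them in that order.

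\emph{Exactness of $E$.} I would follow the proof of \cref{g_rigidification}, replacing the evaluation functors $\mathbf{ev}_{\pi_\mathcal{C} x}$ by the corepresentables $\mathbf{Y}'(\mathbf{y}^{op} D_{0,F} x) = Hom_{\mathcal{P}(\mathcal{C})}(\mathbf{y}^*\mathbf{y}(D_{0,F}x),-)$. Since $\mathcal{P}(\mathcal{C})$ is a full subcategory of fibrant objects in a simplicial model category and each $\mathbf{y}^*\mathbf{y}(D_{0,F}x)$ is cofibrant (every object is cofibrant in the injective structure), each such functor has three properties: it preserves all limits that exist, it sends fibrations between fibrant objects to Kan fibrations, and (by Ken Brown's lemma) it sends weak equivalences between fibrant objects to weak equivalences.

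The left Kan extension along the opfibration $\mathbf{E} \to \mathcal{D}^{op}$ is computed fiberwise as a colimit over $P_F(d)$, which is a filtered poset. Filtered colimits in $\mathbf{SSet}$ commute with finite limits and preserve Kan fibrations and weak equivalences (\cite{rosicky2009combinatorial}). Hence the transposed functor $\mathcal{P}(\mathcal{C}) \to \mathbf{SSet}^{\mathcal{D}^{op}}$ preserves the terminal object and pullbacks along fibrations, and it sends fibrations and weak equivalences to pointwise ones.

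Its values are homotopical diagrams by the same argument as in \cref{g_rigidification}: a weak equivalence in $\mathcal{D}$ induces a final map of filtered posets. Postcomposing with $\mathbf{R}_\mathcal{D}$, which by \cref{injective_frep} preserves limits and sends pointwise fibrations to injective fibrations, lands in fibrant objects. These objects are also local, since locality is a homotopy-invariant property and the diagrams are homotopical. So $E$ is exact.

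\emph{The Kan extension property.} I would first prove that $\mathbf{Ho}_\infty E$ is $Lan_{\mathbf{y}}\mathbf{Ho}_\infty F$. This is the parametrized version of \cref{unp_lan}: run the chain of equivalences \eqref{lan2} fiberwise over $d\in\mathcal{D}^{op}$. Using finality of $\mathbf{N}P_F(d)\to (\mathbf{Ho}_\infty(\mathcal{C})_{/F_c(-,d)})^{op}$ from \cref{func_filt}, we get
\[ \mathbf{Ho}_\infty(E)(X)(d)\simeq \varinjlim_{x} X(\mathbf{y}\pi x). \]
This identification is natural in $d$ because the transformation $\Phi$ of \cref{func_filt} is natural. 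The Yoneda lemma in the homotopy $2$-category then identifies the comparison map as an equivalence.

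Next, $\mathbf{Ho}_\infty(n_K)$ is the functor $c'\mapsto Map(\mathbf{Ho}_\infty K(-),c')$. This uses that $H'$ is equivalent to $H$ (Lemma after \cref{func_homf}), together with Proposition 3.23 of \cite{cisinski2010invariance} and the fact that $\mathbf{R}_\mathcal{C}$ is a fibrant replacement. The formula $Lan_K \simeq Lan_\mathbf{y}(-)\circ n_K$, valid for $\infty$-functors into a cocomplete target such as $\mathcal{S}^{\mathbf{Ho}_\infty(\mathcal{D}^{op})}$, then gives the result. Note that $n_K$ itself is not exact, so the honest morphism of fibration categories is the span through $\mathcal{Q}'(K)$ obtained from \cref{corr}. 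This requires checking that $n_K$ preserves weak equivalences and sends pullbacks along fibrations to homotopy pullbacks, which follows from $H'$ preserving limits in the second variable and \cite[Corollaire 3.12]{cisinski2010invariance}.

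\emph{Main obstacle.} I expect the hardest step to be the naturality in $d$ of the identification $\mathbf{Ho}_\infty E \simeq Lan_\mathbf{y}$. It requires that the strict rigidification $D_{0,F}$ obtained from \cref{fibcat_fibrant_fb} via $\mathbf{R}(\mathcal{C})$ be coherently compatible with the straightened diagram $Q_F$, and not merely compatible objectwise. I would try to reduce this to the equivalence between the projective model structure on $\mathbf{RelCat}^{\mathcal{D}^{op}}$ and $\infty$-functors $\mathbf{Ho}_\infty(\mathcal{D}^{op})\to\mathcal{QCat}$, using cofibrancy of $\xi P_F$ obtained from \cref{lquillen_reedy}.
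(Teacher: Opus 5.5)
Your proposal follows essentially the same route as the paper. There, exactness of $E$ comes, as in the proof of \cref{g_rigidification}, from filtered colimits over the fibers $P_F(d)$ of limit-preserving, homotopical corepresentables, followed by $\mathbf{R}_\mathcal{D}$; the Kan extension property comes from the parameterized form of \cref{unp_lan} combined with $Lan_K F \simeq Lan_{\mathbf{y}} F \circ n_K$. One step you copy from the paper should be repaired rather than repeated: $P_F(u)$ is generally not final, because any element of $P_F(d)$ containing a vertex $(b,n)$ with $b$ outside the image of $Q_F(u)$ has empty comma category. Homotopicality of the fiberwise colimits should instead be deduced from their $d$-natural identification with $F_c(c,-)$, which is exactly the coherence you single out as the main obstacle.
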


\begin{remark}
 The homotopy left Kan extension of $F$ along $K$ is hence represented by the following span of exact functors:

\[\begin{tikzcd}[ampersand replacement=\&]
	\&\& {\mathcal{Q}'(K)} \\
	\&\&\& {\mathcal{P}(\mathcal{C})} \\
	{\mathcal{C}'} \&\&\&\& {\mathcal{P}(\mathcal{D})}
	\arrow[from=1-3, to=2-4]
	\arrow[from=1-3, to=3-1]
	\arrow["E", from=2-4, to=3-5]
\end{tikzcd}\]
\end{remark}

\section{Applications}

In this section, we construct a well-behaved implementation of the localization functor: $$\mathbf{Ho}_\infty : \mathbf{FibCat} \to \mathbf{QCat}_{lex}$$
From there, we will provide a new proof that this functor is a DK-equivalence, relying a slight variation on Cisinski's criterion characterizing the DK-equivalence between fibration categories.

\begin{definition}
 A functor $F : \mathcal{C} \to \mathcal{D}$ between fibration categories is weakly exact when it satisfies the assumption of \cref{corr}: it is homotopical, it maps pullbacks along fibrations to pullbacks that are homotopy pullbacks, and it preserves the terminal object up to weak equivalence (i.e $F(*_\mathcal{C}) \to *_\mathcal{D}$ is a weak equivalence).
\end{definition}

\begin{lemma}
 The relative nerve functor $$\mathbf{N}_\xi : \mathbf{RelCat} \to \mathbf{sSSet}$$ of \cite[Theorem 6.1]{barwick2012relative}, from the model category of relative categories to the model structure of complete Segal spaces on bisimplicial sets,t induces a weakly exact functor $$\mathbf{Ho}_\infty : \mathbf{FibCat} \to \mathbf{QCat}_{lex}$$ fitting in a commutative square

\[\begin{tikzcd}[ampersand replacement=\&]
	{\mathbf{FibCat}} \&\& {\mathbf{QCat}_{lex}} \\
	\\
	{\mathbf{RelCat}} \& { \mathbf{sSSet}} \& {\mathbf{QCat}}
	\arrow["{\mathbf{Ho}_\infty}", from=1-1, to=1-3]
	\arrow[from=1-1, to=3-1]
	\arrow[from=1-3, to=3-3]
	\arrow["{\mathbf{N}_\xi}"', from=3-1, to=3-2]
	\arrow["{i_1^*}"', from=3-2, to=3-3]
\end{tikzcd}\]
where $i_1^*$ is the right adjoint in a Quillen equivalence between complete Segal spaces and quasicategories (see \cite[Theorem 4.11]{joyal2007quasi}).
\end{lemma}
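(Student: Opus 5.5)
The plan is to define $\mathbf{Ho}_\infty$ on objects as the composite $i_1^* \circ \mathbf{N}_\xi$ restricted to fibration categories, then check two things. First, the values land in $\mathbf{QCat}_{lex}$ and the action on exact functors lands in left exact $\infty$-functors, so that the square commutes by definition. Second, the resulting functor between fibration categories satisfies the three conditions of weak exactness.

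For the first point, recall from \cite{meier2016fibration} that fibration categories are fibrant in the Barwick--Kan model structure. Since $\mathbf{N}_\xi$ is right Quillen and $i_1^*$ is right Quillen, the composite sends a fibration category $\mathcal{C}$ to a quasicategory. It presents the $(\infty,1)$-category underlying $\mathcal{C}$, and this is finitely complete: finite limits are computed from the terminal object and from pullbacks along fibrations after fibrant replacement. For an exact functor $F$, preservation of the terminal object and of pullbacks along fibrations implies preservation of homotopy pullbacks. Hence $i_1^*\mathbf{N}_\xi F$ preserves finite limits, and $\mathbf{Ho}_\infty$ is a well-defined functor making the square commute.

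For weak exactness, I would equip $\mathbf{QCat}_{lex}$ with its fibration category structure: categorical equivalences as weak equivalences and isofibrations as fibrations. Finite limits are computed in $\mathbf{SSet}$, as in \cite{szumilo2014two}. The checks then go as follows.
\begin{enumerate}
\item \emph{Homotopical.} A DK-equivalence of fibration categories is a Barwick--Kan weak equivalence between fibrant objects. It is therefore sent by the right Quillen composite to a categorical equivalence, by Ken Brown's lemma.
\item \emph{Terminal object.} The terminal fibration category (the point) is sent to $\Delta^0$, so the terminal object is preserved on the nose.
\item \emph{Pullbacks.} This is the main obstacle. Pullbacks along fibrations in $\mathbf{FibCat}$ are computed in $\mathbf{Cat}$ with the induced structure, and fibrations there are maps with the lifting properties of \cite{szumilo2014two}.
\end{enumerate}

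For step 3, I would first show that such a pullback is also a homotopy pullback in $\mathbf{RelCat}$. The strategy is to prove that fibrations of fibration categories are Barwick--Kan fibrations, or at least sharp maps, so that the strict pullback is homotopy invariant. Given that, the right Quillen functors $\mathbf{N}_\xi$ and $i_1^*$ preserve strict pullbacks along fibrations between fibrant objects, and homotopy pullbacks of fibrant objects. The image square is then a strict pullback in $\mathbf{SSet}$ which is a homotopy pullback in the Joyal model structure. It is also a pullback in $\mathbf{QCat}_{lex}$, since limits there are created in $\mathbf{SSet}$.

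If the fibrations in $\mathbf{FibCat}$ turn out not to be Barwick--Kan fibrations, I would fall back on a direct argument. Compute $\mathbf{N}_\xi$ of the pullback levelwise; its $n$-simplices are relative functors out of finite relative categories. The lifting properties of fibrations of fibration categories, together with \cref{fibcat_fibrant}, should then let me compare the levelwise pullback with the homotopy pullback of Kan complexes of homotopical diagrams.
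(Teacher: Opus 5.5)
Your proposal follows the paper's route. Fibrancy of fibration categories in the Barwick--Kan structure (Meier), together with $\mathbf{N}_\xi$ and $i_1^*$ being right Quillen, lands you in quasicategories; Ken Brown's lemma gives homotopicality; the terminal object is preserved on the nose; and right adjoints and right Quillen functors preserve strict pullbacks and homotopy pullbacks. The step you leave conditional, namely that pullbacks along fibrations in $\mathbf{FibCat}$ are homotopy pullbacks in $\mathbf{RelCat}$, is simply asserted in the paper's proof, so your plan to verify it (e.g.\ by showing these fibrations are Barwick--Kan fibrations) only makes explicit what the paper takes for granted.
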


\begin{proof}
 Because fibration categories are fibrant object of $\mathbf{RelCat}$, as proved in \cite{meier2016fibration}, and because $\mathbf{N}_\xi$ and $i_1^*$ are right Quillen functors, the composite takes values in fibrant objects for the Joyal model structure, i.e. quasicategories.
 That the functor restricted to the category $\mathbf{FibCat}$ of fibration categories factors moreover through the subcategory $\mathbf{QCat}_{lex}$ of $\mathbf{QCat}$ is clear since fibration categories underlie finitely complete $(\infty,1)$-categories. The functor is moreover homotopical by Ken Brown's lemma. 
 
 The underlying $\infty$-functor from $\mathbf{RelCat}$ to $\mathbf{QCat}$ is an equivalence and a right adjoint, in particular it preserves limits and homotopy limits. Since pullbacks along fibrations in $\mathbf{FibCat}$ are homotopy limits, $\mathbf{Ho}_\infty$ takes them to pullback that are homotopy pullbacks in $\mathbf{QCat}$ (hence in $\mathbf{QCat}_{lex}$). Similarly, the terminal object is preserved in the higher sense (and here even in the $1$-categorical sense), so we can conclude.
\end{proof}

We now look to extend to following key result characterizing DK-equivalences among exact functors:

\begin{theorem}[Cisinski]
\label{cisinski_dke}
 Given fibration categories $\mathcal{F}_0$ and $\mathcal{F}_1$, as well as an exact functor $H : \mathcal{F}_0 \to \mathcal{F}_1$, the following are equivalent:
 \begin{itemize}
  \item $H$ is a DK-equivalence.
  \item $\mathbf{Ho}(H) : \mathbf{Ho}(\mathcal{F}_0) \to \mathbf{Ho}(\mathcal{F}_1)$ is an equivalence of categories.
  \item $H$ satisfies the following two \textit{approximations properties}:
  \begin{enumerate}
   \item[(AP1)] $H$ reflects weak equivalences.
   \item[(AP2)] For every objects $x_0 \in \mathcal{F}_0$ and $y_1 \in \mathcal{F}_1$, and every morphism $y_1 \to H(x_0)$ in $\mathcal{F}_1$, there exists a commutative square in $\mathcal{F}_1$,

\[\begin{tikzcd}[ampersand replacement=\&]
	{y_1} \&\& {H(x_0)} \\
	\\
	{y'_1} \&\& {H(y_0)}
	\arrow[from=1-1, to=1-3]
	\arrow["\sim"{description}, from=3-1, to=1-1]
	\arrow["\sim"{description}, from=3-1, to=3-3]
	\arrow["{H(f)}"', from=3-3, to=1-3]
\end{tikzcd}\]
   with $f : y_0 \to x_0$ an arrow in $\mathcal{F}_0$, and where the indicated arrows are weak equivalences.
  \end{enumerate}
 \end{itemize}
\end{theorem}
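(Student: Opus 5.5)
The plan is to prove the cycle of implications (DK-equivalence) $\Rightarrow$ (equivalence of homotopy categories) $\Rightarrow$ (AP1)+(AP2) $\Rightarrow$ (DK-equivalence). Throughout, I use two standard facts about a fibration category $\mathcal{F}$: weak equivalences are saturated, and $\mathbf{Ho}(\mathcal{F})$ admits a calculus of right fractions. Saturation means a morphism is a weak equivalence iff it becomes invertible in $\mathbf{Ho}(\mathcal{F})$. The calculus of fractions means every morphism $x\to y$ in $\mathbf{Ho}(\mathcal{F})$ is represented by a span $x\stackrel{\sim}{\twoheadleftarrow} x'\to y$ with left leg a trivial fibration. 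These are exactly the hom-categories of the bicategory $\mathbf{B}\mathcal{F}$ of \cref{LC_fibcat_bicat}, taken at the level of $\pi_0$.

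The first implication is immediate: a DK-equivalence induces an equivalence of the $\pi_0$-categories of the simplicial localizations, and these are $\mathbf{Ho}(\mathcal{F}_0)$ and $\mathbf{Ho}(\mathcal{F}_1)$.

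For the second implication, (AP1) follows from saturation. If $H(f)$ is a weak equivalence, then $\mathbf{Ho}(H)(f)$ is invertible. Since $\mathbf{Ho}(H)$ is an equivalence, it is conservative, so $f$ is invertible in $\mathbf{Ho}(\mathcal{F}_0)$ and hence a weak equivalence. For (AP2), take $y_1\to H(x_0)$. By essential surjectivity pick $y_0$ with an isomorphism $H(y_0)\cong y_1$ in $\mathbf{Ho}(\mathcal{F}_1)$. By fullness, the composite $H(y_0)\cong y_1\to H(x_0)$ is $\mathbf{Ho}(H)$ of a fraction $y_0\stackrel{\sim}{\twoheadleftarrow} y_0'\xrightarrow{f} x_0$; replace $y_0$ by $y_0'$. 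One then has two maps $y_1\leftarrow \bullet \to H(y_0)$, represented by fractions, that agree in $\mathbf{Ho}(\mathcal{F}_1)$ after composing to $H(x_0)$. Using the calculus of fractions (common refinement of spans, together with path objects to turn homotopic maps into equal ones after a further trivial fibration), I produce an object $y_1'$ with weak equivalences to $y_1$ and $H(y_0)$ making the square commute strictly. The commutativity up to homotopy is rectified by pulling back along the path object $P H(x_0)\to H(x_0)\times H(x_0)$.

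The last implication is the main obstacle. The strategy, following Cisinski, is to show that $H$ induces weak equivalences on mapping spaces and is essentially surjective up to weak equivalence. Essential surjectivity is the case $x_0=*$ of (AP2), since $H(*)=*$. For mapping spaces, by \cref{LC_fibcat_bicat} and the proposition following it, the mapping space from $a$ to $b$ is the nerve of the category of spans $a\stackrel{\sim}{\twoheadleftarrow}a'\to b$. It therefore suffices to show that the functor
\[
\mathrm{Span}_{\mathcal{F}_0}(a,b)\to\mathrm{Span}_{\mathcal{F}_1}(Ha,Hb)
\]
induces a weak equivalence of nerves. I would apply Quillen's Theorem A: for each object $s$ of the target, show that the comma category of source spans equipped with a map to $s$ has contractible nerve. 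Here (AP2), applied to the map $s\to H(a\times b)$, supplies a nonempty such category. Repeating (AP2) on finite diagrams (pairs of objects, parallel pairs) and using fibrant replacements and pullbacks along fibrations, which $H$ preserves, shows this comma category is cofiltered up to homotopy, hence contractible. (AP1) guarantees that the left legs constructed in $\mathcal{F}_0$ are genuinely weak equivalences. I expect verifying cofilteredness, especially the equalizing step, to be the delicate part. For that step I would use path objects $P(a\times b)$ to replace parallel maps by a single map into a pullback and then apply (AP2) once more.
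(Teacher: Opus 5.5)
The paper does not prove this theorem; it quotes it from Cisinski \cite{cisinski2010invariance}, so your proposal has to stand on its own. The implications (DK-equivalence)$\Rightarrow$(equivalence of homotopy categories)$\Rightarrow$(AP1)+(AP2) follow the standard fraction-calculus argument. The step (AP1)+(AP2)$\Rightarrow$(DK-equivalence), however, has a genuine gap at the very start of the Theorem A argument. For $s=(Ha\xleftarrow{\sim}S\to Hb)$, an object of the comma category $\Phi/s$ is a span $\sigma=(a\twoheadleftarrow y\to b)$ together with an \emph{actual} morphism $H(y)\to S$ over $Ha\times Hb$. (AP2) never produces such a morphism. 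Applied to $S\to H(a\times b)$, it gives only a zigzag $S\xleftarrow{\sim}S'\xrightarrow{\sim}H(y_0)$, i.e.\ a zigzag $s\leftarrow s'\to\Phi(\sigma)$, and a fibration category has no cofibrancy or lifting with which to straighten it. The other variance $s/\Phi$ needs a strict map $S\to H(y)$, which (AP2) does not give either.

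These comma categories can in fact be empty for a DK-equivalence. Take $H=(-)^{[0,1]}\colon\mathbf{Top}\to\mathbf{Top}$, with all spaces, Serre fibrations and weak homotopy equivalences. It is exact and connected to the identity by the natural weak equivalence $\mathrm{ev}_0$. Let $a=*$, $b=\mathbb{R}$ and $s=(*\leftarrow *\xrightarrow{\gamma}\mathbb{R}^{[0,1]})$ with $\gamma$ a non-constant path. A morphism $\Phi(\sigma)\to s$ would force $g^{[0,1]}\colon y^{[0,1]}\to\mathbb{R}^{[0,1]}$ to be constant at $\gamma$. That is impossible, since $y\neq\emptyset$ and $g^{[0,1]}$ sends constant paths to constant paths. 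So $\Phi/s=\emptyset$, and no \emph{cofiltered up to homotopy} argument can make it contractible: Theorem A in this strict form is the wrong tool.

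Here is a route that works. First prove (AP1)+(AP2)$\Rightarrow$(equivalence of homotopy categories) directly by fractions; this handles $\pi_0$ of mapping spaces. Next, note that (AP1) and (AP2) pass to the induced exact functor between the fibration categories of fibrations over $a$ and over $Ha$; after applying (AP2), factor to restore fibrancy over the base. Then use
\[
\pi_n(\mathrm{Map}(a,b),f)\cong\mathrm{Hom}_{\mathbf{Ho}(\mathcal{F}_{0/a})}\bigl(a,\Omega^n_a(a\times b)\bigr),
\]
where $a$ is the terminal object of the slice and the loops are fibrewise, based at the section $(1,f)$. Exact functors preserve this construction, so Whitehead's theorem finishes the argument.

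There are also two smaller points in the earlier steps.
\begin{enumerate}
\item Saturation does not follow from the axioms listed in the paper, which impose only 2-out-of-3. It is an extra hypothesis (built into 2-out-of-6 settings). Your step from equivalence of homotopy categories to (AP1) relies on it, and so does your (AP2) argument, to see that $y_1'\to H(y_0)$ is a weak equivalence.
\item In that (AP2) argument, maps that are equal in $\mathbf{Ho}$ only become homotopic, not equal, after precomposing with a trivial fibration. The homotopy must therefore be absorbed on the $\mathcal{F}_0$ side, by replacing $y_0$ with $y_0\times_{x_0}Px_0$, whose image is $H(y_0)\times_{H(x_0)}H(Px_0)$. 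Pulling back along an arbitrary path object of $H(x_0)$ would leave the image of $H$.
\end{enumerate}
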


\begin{proposition}
\label{corr_r}
 Let $F: \mathcal{C} \to \mathcal{D}$ be a weakly exact functor between fibration categories. Assume that $P\mathcal{D}$ is a path-object for $\mathcal{D}$.
 Then $F$ is a DK-equivalence if it satisfies Cisinski's approximation properties of \cref{cisinski_dke}.
\end{proposition}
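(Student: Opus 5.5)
The idea is to reduce to Cisinski's criterion (\cref{cisinski_dke}) by replacing $F$ with the span of exact functors from \cref{corr}. Since $F$ is weakly exact and $P\mathcal{D}$ is a path-object, \cref{corr} provides a fibration category $\mathcal{C}'$, exact functors $\alpha : \mathcal{C}' \to \mathcal{C}$ and $\beta : \mathcal{C}' \to \mathcal{D}$, and a functor $m : \mathcal{C} \to \mathcal{C}'$. Here $\alpha m = id_\mathcal{C}$, $\beta m = F$, and both $m$ and $\alpha$ are DK-equivalences. By $2$-out-of-$3$ for DK-equivalences applied to $\beta m = F$, it suffices to show that $\beta$ is a DK-equivalence. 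Since $\beta$ is exact, \cref{cisinski_dke} applies, so the task is to check (AP1) and (AP2) for $\beta$, using the corresponding properties of $F$.

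For (AP1), take a morphism $f : x \to y$ in $\mathcal{C}'$ with $\beta(f)$ a weak equivalence. An object $x$ of $\mathcal{C}'$ consists of $x_\mathcal{C}$, $x_\mathcal{D}$, and a span of trivial fibrations $F(x_\mathcal{C}) \leftarrow X \rightarrow x_\mathcal{D}$. The morphism $f$ induces a map of such spans. By $2$-out-of-$3$ along the legs, $X \to Y$ is a weak equivalence, and then so is $F(\alpha f) : F(x_\mathcal{C}) \to F(y_\mathcal{C})$. Since $F$ reflects weak equivalences (AP1), $\alpha(f)$ is a weak equivalence. Weak equivalences in $\mathcal{C}'$ are defined componentwise, and the $P\mathcal{D}$ component is handled by the same $2$-out-of-$3$ argument, so $f$ is a weak equivalence.

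For (AP2), take an object $x$ of $\mathcal{C}'$, an object $y_1$ of $\mathcal{D}$, and a map $g : y_1 \to x_\mathcal{D} = \beta(x)$.
\begin{enumerate}
\item Since the trivial fibration $X \to x_\mathcal{D}$ can be pulled back along $g$, we obtain $y_1'' := y_1 \times_{x_\mathcal{D}} X \stackrel{\sim}{\to} y_1$, together with a map $y_1'' \to X \to F(x_\mathcal{C})$.
\item Applying (AP2) for $F$ to this map gives $f_0 : y_0 \to x_\mathcal{C}$ in $\mathcal{C}$ and weak equivalences $y_1'' \stackrel{\sim}{\leftarrow} y_1' \stackrel{\sim}{\to} F(y_0)$ making the square commute.
\item We then need an object $z$ of $\mathcal{C}'$ with $z_\mathcal{C} = y_0$ and $z_\mathcal{D} = y_1'$, together with a morphism $z \to x$ in $\mathcal{C}'$ lying over $f_0$ and over the composite $y_1' \to y_1'' \to y_1 \to x_\mathcal{D}$.
\end{enumerate}
To build $z$, factor the map $y_1' \to F(y_0) \times_{F(x_\mathcal{C}) \times x_\mathcal{D}} X$ as a weak equivalence $y_1' \stackrel{\sim}{\to} Z$ followed by a fibration $Z \twoheadrightarrow F(y_0) \times_{F(x_\mathcal{C}) \times x_\mathcal{D}} X$. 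This pullback exists because $X \to F(x_\mathcal{C}) \times x_\mathcal{D}$ is a fibration. The resulting span $F(y_0) \leftarrow Z \rightarrow y_1'$ then consists of a fibration to $F(y_0) \times y_1'$ whose legs are weak equivalences by $2$-out-of-$3$. This is the same device as in the factorization step of the proof of \cref{corr}, and relies on $F$ preserving weak equivalences. The object $z$ maps to $x$. Finally, $\beta(z) = y_1' \to y_1$ is a weak equivalence, and $\beta(z \to x)$ composes with it to give the required square.

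The main obstacle is this last step: making sure the constructed span is Reedy fibrant, i.e.\ that $Z \to F(y_0) \times y_1'$ is a fibration, while keeping $y_1'$ as the $\mathcal{D}$-component. We may not be able to arrange it as stated, in which case we change $\mathcal{D}$-component. The fix is to take $z_\mathcal{D}$ to be a fibrant replacement, namely to factor $Z \to y_1'$ in a way compatible with the square, and then absorb the extra weak equivalence into the left vertical leg of the (AP2) square. This is allowed, since (AP2) only asks for a zigzag of weak equivalences into $y_1$.
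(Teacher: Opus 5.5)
Your proposal is correct and follows the paper's proof: reduce to $\beta$ via \cref{corr} and $2$-out-of-$3$, then for (AP2) pull back the trivial fibration $X \to x_\mathcal{D}$ along $g$, apply (AP2) for $F$, and factor the induced map into $(F(y_0)\times y_1')\times_{F(x_\mathcal{C})\times x_\mathcal{D}} X$ (you omitted the factor $y_1'$ when writing this pullback). The only difference is that the paper factors into the analogous pullback over $F(y_0)\times y_1$, so that $\beta$ of the new object is $y_1$ itself and the (AP2) square consists of identities. Also, the obstacle in your last paragraph never arises, so no fix is needed: $Z \to F(y_0)\times y_1'$ is the composite of the fibration $Z \twoheadrightarrow P$ with a base change of the fibration $X \twoheadrightarrow F(x_\mathcal{C})\times x_\mathcal{D}$, hence a fibration, and its legs are weak equivalences by $2$-out-of-$3$ exactly as you say.
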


\begin{proof}
 Applying \cref{corr}, we have a pullback diagram

\[\begin{tikzcd}[ampersand replacement=\&]
	{\mathcal{C}'} \&\& {P\mathcal{D}} \\
	\\
	{\mathcal{C} \times \mathcal{D}} \&\& {\mathcal{D} \times \mathcal{D}}
	\arrow["u", from=1-1, to=1-3]
	\arrow["{<\alpha,\beta>}"', from=1-1, to=3-1]
	\arrow["\lrcorner"{anchor=center, pos=0.125}, draw=none, from=1-1, to=3-3]
	\arrow["{<\partial_0,\partial_1>}", from=1-3, to=3-3]
	\arrow["{F \times id_{\mathcal{D}}}"', from=3-1, to=3-3]
\end{tikzcd}\]
where $\alpha$ and $\beta$ are exact functors.

Suppose that $F$ satisfies the approximation properties. By definition of the weak equivalences in $\mathcal{C}'$, since $F$ satisfies (AP1), so does $\beta$. For (AP2), consider a morphism $d \to \beta(c')$ in $\mathcal{D}$. We have a diagram 
\[\begin{tikzcd}[ampersand replacement=\&]
	d \&\& {\beta(c')} \\
	\\
	{D'} \&\& {C'} \\
	\\
	\&\& {F\alpha (c')}
	\arrow[from=1-1, to=1-3]
	\arrow[dashed, from=3-1, to=1-1]
	\arrow["\ulcorner"{anchor=center, pos=0.125, rotate=90}, draw=none, from=3-1, to=1-3]
	\arrow[dashed, from=3-1, to=3-3]
	\arrow["\sim"{description}, two heads, from=3-3, to=1-3]
	\arrow["\sim"{description}, two heads, from=3-3, to=5-3]
\end{tikzcd}\]
where we constructed the pullback $D'$. Using that $F$ has the approximation property (AP2), we may complete the previous diagram as follows:
\[\begin{tikzcd}[ampersand replacement=\&]
	\&\& d \&\& {\beta(c')} \\
	\\
	\&\& {D'} \&\& {C'} \\
	\& {D''} \\
	{F(c)} \&\&\&\& {F\alpha (c')}
	\arrow[from=1-3, to=1-5]
	\arrow[dashed, from=3-3, to=1-3]
	\arrow["\ulcorner"{anchor=center, pos=0.125, rotate=90}, draw=none, from=3-3, to=1-5]
	\arrow[dashed, from=3-3, to=3-5]
	\arrow["\sim"{description}, two heads, from=3-5, to=1-5]
	\arrow["\sim"{description}, two heads, from=3-5, to=5-5]
	\arrow["\sim"{description}, from=4-2, to=3-3]
	\arrow["\sim"{description}, from=4-2, to=5-1]
	\arrow[from=5-1, to=5-5]
\end{tikzcd}\]
and finally form the weak equivalence/fibration factorization as in the following diagram
\[\begin{tikzcd}[ampersand replacement=\&]
	{D''} \\
	\& {D'''} \\
	\&\& P \&\& {C'} \\
	\\
	\&\& {F(c) \times d} \&\& {F\alpha (c') \times \beta(c')}
	\arrow["\sim"{description}, from=1-1, to=2-2]
	\arrow[curve={height=-12pt}, from=1-1, to=3-5]
	\arrow[curve={height=12pt}, from=1-1, to=5-3]
	\arrow[two heads, from=2-2, to=3-3]
	\arrow[from=3-3, to=3-5]
	\arrow[two heads, from=3-3, to=5-3]
	\arrow["\ulcorner"{anchor=center, pos=0.125}, draw=none, from=3-3, to=5-5]
	\arrow[two heads, from=3-5, to=5-5]
	\arrow[from=5-3, to=5-5]
\end{tikzcd}\]
where $D''' \to F(c) \times d$ is a span of trivial fibrations since $D'' \to  F(c) \times d$ is a span of weak equivalences by construction. This defines an object $d'$ of $\mathcal{C}'$ and a map $f' :d' \to c'$, and we have the following commutative diagram,
\[\begin{tikzcd}[ampersand replacement=\&]
	d \&\& {\beta(c')} \\
	\\
	{ d} \&\& {\beta(d') (=d)}
	\arrow[from=1-1, to=1-3]
	\arrow["{id_d}", from=3-1, to=1-1]
	\arrow["{id_d}"', from=3-1, to=3-3]
	\arrow["{\beta(f')}"', from=3-3, to=1-3]
\end{tikzcd}\]
proving that $\beta$ has the approximation property.

Since $\beta$ is an exact functor between fibration categories, it is therefore a DK-equivalence by \cref{cisinski_dke}. Using the second statement of \cref{corr}, and the $2$-out-of-$3$ property for DK-equivalences, we conclude that $F$ is also a DK-equivalence.
\end{proof}

\begin{remark}
\label{rmk:corr_r}
 In the proof of \cref{corr_r}, we only need $\beta$ to satisfy the approximation property, which may be the case even if $F$ does not satisfy it. Therefore, the assumption of \cref{corr_r} could be weakened accordingly.
\end{remark}

We can now give an alternative proof of \cite[Theorem 4.9]{szumilo2014two}:

\begin{theorem}
 The functor $$\mathbf{Ho}_\infty : \mathbf{FibCat} \to \mathbf{QCat}_{lex}$$
 is a DK-equivalence.
\end{theorem}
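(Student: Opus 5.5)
The plan is to apply \cref{corr_r} to $\mathbf{Ho}_\infty : \mathbf{FibCat} \to \mathbf{QCat}_{lex}$. The preceding lemma shows that this functor is weakly exact. Both $\mathbf{FibCat}$ (with Szumiło's structure) and $\mathbf{QCat}_{lex}$ are fibration categories, and $\mathbf{QCat}_{lex}$ admits a path object given by cotensoring with the interval $J$ (or $\Delta^1$ restricted to equivalences). So it suffices to check Cisinski's approximation properties (AP1) and (AP2) from \cref{cisinski_dke}, or, following \cref{rmk:corr_r}, the corresponding properties for the exact projection $\beta$.

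For (AP1), I would argue as follows. If an exact functor $H : \mathcal{F}_0 \to \mathcal{F}_1$ between fibration categories has $\mathbf{Ho}_\infty(H)$ an equivalence of quasicategories, then $H$ is a DK-equivalence. This holds by definition of DK-equivalence, since $\mathbf{Ho}_\infty$ factors through the relative nerve $\mathbf{N}_\xi$, which reflects weak equivalences between fibrant objects (it is the right half of a Quillen equivalence and Barwick--Kan weak equivalences are detected by it). So reflection is essentially formal.

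The substance lies in (AP2). We are given a finitely complete quasicategory $\mathcal{A}$, a fibration category $\mathcal{C}$, and a left exact $\infty$-functor $G : \mathcal{A} \to \mathbf{Ho}_\infty(\mathcal{C})$. We must produce a fibration category $\mathcal{C}_0$, an exact functor $f : \mathcal{C}_0 \to \mathcal{C}$, and an equivalence $\mathcal{A}' \simeq \mathbf{Ho}_\infty(\mathcal{C}_0)$ over $\mathbf{Ho}_\infty(\mathcal{C})$, up to a weak equivalence $\mathcal{A}' \to \mathcal{A}$. I would first represent $\mathcal{A}$ by a fibration category: the restricted Yoneda construction of the remark after \cref{fibcat_yoneda}, applied to a fibration category presenting $\mathcal{A}$, e.g. one obtained from \cref{fibcat_fibrant} applied to a relative category $\mathcal{A}_r$ with $\mathbf{Ho}_\infty(\mathcal{A}_r)\simeq \mathcal{A}$ via the DK-equivalence $\mathbf{RelCat}\simeq\mathbf{QCat}$. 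Concretely, take the full sub-fibration category $\mathbf{R}$ of $\mathcal{P}(\mathcal{A}_r)$ on objects equivalent to representables; since $\mathcal{A}$ is finitely complete and representables are closed under finite homotopy limits, this is a fibration category presenting $\mathcal{A}$. Next, compose $G$ with the Yoneda embedding of $\mathcal{C}$, obtaining a left exact $\infty$-functor $\mathcal{A}\to \mathbf{Ho}_\infty(\mathcal{P}(\mathcal{C}))$. Then apply \cref{g_rigidification} to obtain a homotopical $G_r : \mathbf{R} \to \mathcal{P}(\mathcal{C})$ preserving finite limits and homotopy limits, hence weakly exact and presenting $G$. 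Pulling back along the full inclusion $\mathbf{R}(\mathcal{C}) \subset \mathcal{P}(\mathcal{C})$ of essentially representables, and using \cref{corr} with a path object, yields a span of exact functors $\mathbf{R} \leftarrow \mathbf{R}' \to \mathbf{R}(\mathcal{C})$, with the left leg a DK-equivalence. Composing with the inverse of the DK-equivalence $\mathcal{C} \to \mathcal{Q}(\mathcal{C})\to\mathbf{R}(\mathcal{C})$ (again a span via \cref{fibcat_yoneda}) should give the required square in (AP2), after pulling back the relevant legs in $\mathbf{FibCat}$ along fibrations to land over $\mathcal{C}$ itself.

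The main obstacle is this last step: landing strictly over $\mathcal{C}$ rather than over $\mathbf{R}(\mathcal{C})$. I would handle it by forming the pullback in $\mathbf{FibCat}$ of $\mathbf{R}'\to\mathbf{R}(\mathcal{C})$ along a fibration replacement of $\mathcal{C}\to\mathbf{R}(\mathcal{C})$, namely the span from $\mathcal{Q}$. The induced map to $\mathcal{C}$ is then exact, and the projection to $\mathbf{R}'$ is a DK-equivalence by right properness of Szumiło's fibration category. This gives the object $y_0$ and the weak equivalences demanded by (AP2).
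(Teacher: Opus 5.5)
\textbf{(AP1).} Your argument here is fine and matches the paper's.

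\textbf{(AP2): the gap.} As stated in \cref{cisinski_dke} and as used in the proof of \cref{corr_r}, (AP2) requires a square that commutes \emph{on the nose} in $\mathbf{QCat}_{lex}$. Concretely, you need an equality $G\circ w=\mathbf{Ho}_\infty(f)\circ v$ of maps of simplicial sets, where $w:\mathcal{A}'\to\mathcal{A}$ and $v:\mathcal{A}'\to\mathbf{Ho}_\infty(\mathcal{C}_0)$ are equivalences.

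Nothing in your construction produces such an equality. Each step holds only up to natural equivalence:
the identification $\mathbf{Ho}_\infty(\mathbf{R})\simeq\mathcal{A}$;
the output of \cref{g_rigidification}, which only says $\mathbf{Ho}_\infty(G_r)$ is \emph{homotopic} to the given functor;
and the passage through the Yoneda span $\mathcal{C}\leftarrow\mathbf{Q}(\mathcal{C})\to\mathbf{R}(\mathcal{C})$.
So at the end you only have $G\circ w\simeq\mathbf{Ho}_\infty(f)\circ v$.

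The usual way to strictify a homotopy-commutative (AP2) square is to replace $y_0$ by $y_0\times_{x_0}Px_0$. That trick needs the functor to preserve path objects and pullbacks along fibrations strictly, which fails for the merely weakly exact $\mathbf{Ho}_\infty$. Alternatively you would need $\mathbf{Ho}_\infty$ to send fibrations of fibration categories to isofibrations, so that $v$ can be transported along the homotopy. You neither claim nor prove this. So the obstacle you single out, landing over $\mathcal{C}$ rather than $\mathbf{R}(\mathcal{C})$, is not the one that actually blocks the argument.

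\textbf{How the paper gets around it.} It takes the route you mention and then abandon: via \cref{rmk:corr_r} it checks (AP2) for the exact projection $\beta:\mathbf{FQ}\to\mathbf{QCat}_{lex}$ rather than for $\mathbf{Ho}_\infty$. Given $\mathcal{D}_0\to\beta(c')$, it proceeds as follows.
\begin{enumerate}
\item It pulls back the trivial fibration $\mathcal{C}'\to\beta(c')$ to get $\mathcal{D}$, and presents $\mathcal{D}$ by a fibration category.
\item It rigidifies $\mathcal{D}\to\mathcal{C}'\to\mathbf{Ho}_\infty\alpha(c')$ using \cref{g_rigidification}, and chooses a homotopy $H$ on the cylinder $J\times\mathbf{Ho}_\infty(\mathbf{R}(\overline{\mathcal{D}}))$.
\item A pullback and a factorization in $\mathbf{QCat}_{lex}$ then give an object $d'$ of $\mathbf{FQ}$ with $\beta(d')=\mathcal{D}_0$ \emph{exactly}, together with a morphism $f':d'\to c'$.
\end{enumerate}
The homotopy is absorbed into the span component of $d'$, so the (AP2) square for $\beta$ consists of identities.

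\textbf{Where your Yoneda-span step fits.} It is not wasted. The paper applies \cref{g_rigidification} as if it landed directly in $\alpha(c')$, although that result lands in $\mathcal{P}$ of the target. A homotopy pullback along $\mathbf{Q}(\alpha(c'))\to\mathbf{R}(\alpha(c'))$ is a reasonable way to supply the exact functor $\alpha(d')\to\alpha(c')$ inside the $\beta$-argument.

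To make that pullback meaningful, first factor $\mathbf{R}'\to\mathbf{R}(\mathcal{C})$ in $\mathbf{FibCat}$ as a weak equivalence followed by a fibration, and only then pull back. You have not shown that $\mathbf{Q}(\mathcal{C})\to\mathbf{R}(\mathcal{C})$ is a fibration of fibration categories. Indeed $\mathbf{Q}(\mathcal{C})$ is a pullback in $\mathbf{Cat}$ along the non-exact $\mathbf{y}\times id$, not a pullback in $\mathbf{FibCat}$.
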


\begin{proof}
 Since both $\mathbf{FibCat}$ and $\mathbf{QCat}_{lex}$ enjoy the structure of a fibration category, and since the functor $\mathbf{Ho}_\infty : \mathbf{FibCat} \to \mathbf{QCat}_{lex}$ is weakly exact, we can make use of \cref{corr_r}. Actually, we will rely on \cref{rmk:corr_r}, and check the approximation property for the functor $\beta$ defined from the following pullback:

\[\begin{tikzcd}[ampersand replacement=\&]
	{\mathbf{FQ}} \&\& {P\mathbf{QCat}_{lex}} \\
	\\
	{\mathbf{FibCat} \times \mathbf{QCat}_{lex}} \&\& {\mathbf{QCat}_{lex} \times \mathbf{QCat}_{lex}}
	\arrow["u", from=1-1, to=1-3]
	\arrow["{<\alpha,\beta>}"', from=1-1, to=3-1]
	\arrow["\lrcorner"{anchor=center, pos=0.125}, draw=none, from=1-1, to=3-3]
	\arrow["{<\partial_0,\partial_1>}", from=1-3, to=3-3]
	\arrow["{\mathbf{Ho}_\infty \times id_{\mathbf{QCat}_{lex}}}"', from=3-1, to=3-3]
\end{tikzcd}\] 
 That $\beta$ reflects weak equivalence follows from $\mathbf{Ho}_\infty$ reflecting weak equivalences: since the weak equivalences in $\mathbf{FibCat}$ are the exact functors that are DK-equivalences, this boils down to the very definition of a DK-equivalence.
 
 To check (AP2), consider an $\infty$-functor $\mathcal{D}_0 \to \beta(c')$ in $\mathbf{QCat}_{lex}$, where $c'$ is an object of $\mathbf{FQ}$ consisting of a span $\mathcal{C}' \to \mathbf{Ho}_\infty \alpha(c') \times \beta(c')$. Form the pullback $\mathcal{D}$ in the diagram below, and observe that the finitely complete quasicategory $\mathcal{D}$ may be presented by the tribe $\overline{\mathcal{D}}$ of essentially representable fibrant simplicial presheaves on $\mathfrak{C}(\mathcal{D})$, as argued in Chapter 1 (in a more complex setting).
 
\[\begin{tikzcd}[ampersand replacement=\&]
	\&\& {\mathcal{D}_0} \&\& {\beta(c')} \\
	\\
	{\mathbf{Ho}_\infty(\mathbf{R}(\overline{\mathcal{D}}))} \&\& {\mathcal{D}} \&\& {\mathcal{C}'} \\
	\\
	\&\&\&\& {\mathbf{Ho}_\infty\alpha (c')}
	\arrow[from=1-3, to=1-5]
	\arrow["\sim"{description}, from=3-1, to=3-3]
	\arrow[dashed, from=3-3, to=1-3]
	\arrow["\ulcorner"{anchor=center, pos=0.125, rotate=90}, draw=none, from=3-3, to=1-5]
	\arrow[dashed, from=3-3, to=3-5]
	\arrow["\sim"{description}, two heads, from=3-5, to=1-5]
	\arrow["\sim"{description}, two heads, from=3-5, to=5-5]
\end{tikzcd}\]
 Here, $\mathbf{Ho}_\infty(\mathbf{R}(\overline{\mathcal{D}})) \to \mathcal{D}$ is a fixed equivalence of quasicategories.

  Applying \cref{g_rigidification} to the corresponding $\infty$-functor $F' : \mathbf{Ho}_\infty(\overline{\mathcal{D}}) \to \mathbf{Ho}_\infty \alpha(c')$, we get an exact functor $F_r : \mathbf{R}(\overline{\mathcal{C}}) \to \alpha(c')$ such that $\mathbf{Ho}_\infty(F_r)$ is homotopic to $F'$ (modulo the fixed equivalence $\mathbf{Ho}_\infty(\mathbf{R}(\overline{\mathcal{D}})) \simeq \mathcal{D}$. Consider the cylinder object $J \times \mathbf{Ho}_\infty(\mathbf{R}(\overline{\mathcal{D}}))$ for $\mathbf{Ho}_\infty(\mathbf{R}(\overline{\mathcal{D}}))$, where $J$ is the nerve of the free isomorphism category, and consider a homotopy $H$ between the two previous $\infty$-functors $\mathbf{Ho}_\infty(\overline{\mathcal{D}}) \to \mathbf{Ho}_\infty \alpha(c')$ as in the diagram below.

\[\begin{tikzcd}[ampersand replacement=\&]
	\&\&\& {\mathcal{D}_0} \&\& {\beta(c')} \\
	\\
	\& {\mathbf{Ho}_\infty(\mathbf{R}(\overline{\mathcal{D}}))} \&\& {\mathcal{D}} \&\& {\mathcal{C}'} \\
	{J \times \mathbf{Ho}_\infty(\mathbf{R}(\overline{\mathcal{D}}))} \\
	\& {\mathbf{Ho}_\infty(\mathbf{R}(\overline{\mathcal{D}}))} \&\&\&\& {\mathbf{Ho}_\infty\alpha (c')}
	\arrow[from=1-4, to=1-6]
	\arrow["\sim"{description}, from=3-2, to=3-4]
	\arrow["\sim"{description}, from=3-2, to=4-1]
	\arrow[dashed, from=3-4, to=1-4]
	\arrow["\ulcorner"{anchor=center, pos=0.125, rotate=90}, draw=none, from=3-4, to=1-6]
	\arrow[dashed, from=3-4, to=3-6]
	\arrow["\sim"{description}, two heads, from=3-6, to=1-6]
	\arrow["\sim"{description}, two heads, from=3-6, to=5-6]
	\arrow["H", from=4-1, to=5-6]
	\arrow["\sim"{description}, from=5-2, to=4-1]
	\arrow[from=5-2, to=5-6]
\end{tikzcd}\]

We now can form the following diagram, 

\adjustbox{scale=0.8}{
\begin{tikzcd}[ampersand replacement=\&]
	\& {\mathbf{Ho}_\infty(\mathbf{R}(\overline{\mathcal{D}}))} \\
	{\mathcal{D}'''} \&\& {\mathcal{D}''} \\
	\&\&\& {\mathcal{D}'} \&\& {\mathcal{C}'} \\
	\\
	\& {\mathbf{Ho}_\infty(\mathbf{R}(\overline{\mathcal{D}})) \times \mathcal{D}_0} \&\& {J \times \mathbf{Ho}_\infty(\mathbf{R}(\overline{\mathcal{D}})) \times \mathcal{D}_0} \&\& {\mathbf{Ho}_\infty(\alpha (c')) \times \beta(c')}
	\arrow["\sim"{description}, from=1-2, to=2-3]
	\arrow[curve={height=-6pt}, from=1-2, to=3-6]
	\arrow["{<i_1,w>}"', curve={height=6pt}, from=1-2, to=5-4]
	\arrow["\sim"{description}, from=2-1, to=2-3]
	\arrow[two heads, from=2-1, to=5-2]
	\arrow["\ulcorner"{anchor=center, pos=0.125}, draw=none, from=2-1, to=5-4]
	\arrow[two heads, from=2-3, to=3-4]
	\arrow[from=3-4, to=3-6]
	\arrow[two heads, from=3-4, to=5-4]
	\arrow["\ulcorner"{anchor=center, pos=0.125}, draw=none, from=3-4, to=5-6]
	\arrow[two heads, from=3-6, to=5-6]
	\arrow["{i_0 \times id_{\mathcal{D}_0}}"', from=5-2, to=5-4]
	\arrow[from=5-4, to=5-6]
\end{tikzcd}
}
where $\mathcal{D}'$ is obtained by pullback, $w$ is the map from $\mathbf{Ho}_\infty(\mathbf{R}(\overline{\mathcal{D}}))$ to $\mathcal{D}_0$, and where $\mathcal{D}''$ is obtained by factoring $\mathbf{Ho}_\infty(\mathbf{R}(\overline{\mathcal{D}})) \to \mathcal{D}'$ as a weak equivalence followed by a fibration. By construction $\mathcal{D}'' \to J \times \mathbf{Ho}_\infty(\mathbf{R}(\overline{\mathcal{D}})) \times \mathcal{D}_0$ is span of trivial fibration that moreover defines an object of $\mathbf{FQ}$. Since $\mathcal{D}''' \to \mathcal{D}''$ is a weak equivalence (as the pullback of a weak equivalence along a fibration), the span $\mathcal{D}''' \to \mathbf{Ho}_\infty(\mathbf{R}(\overline{\mathcal{D}})) \times \mathcal{D}_0$ also defines an object $d'$ fo $\mathbf{FQ}$. Moreover, we have obtained a morphism $f' : d' \to c'$ in $\mathbf{FQ}$, providing us with a square 
\[\begin{tikzcd}[ampersand replacement=\&]
	{\mathcal{D}_0} \&\& {\beta(c')} \\
	\\
	{\mathcal{D}_0} \&\& {\beta(d') (=\mathcal{D}_0)}
	\arrow[from=1-1, to=1-3]
	\arrow["{id_{\mathcal{D}_0}}", from=3-1, to=1-1]
	\arrow["{id_{\mathcal{D}_0}}"', from=3-1, to=3-3]
	\arrow["{\beta(f')}"', from=3-3, to=1-3]
\end{tikzcd}\]
which proves that (AP2) holds.

This concludes the proof that $\mathbf{Ho}_\infty : \mathbf{FibCat} \to \mathbf{QCat}_{lex}$ is a DK-equivalence. 
\end{proof}

\section{Extension to tribes}

In this last section, we recall the definition of a tribe (as introduced in \cite{joyal2017notes}). Our goal is, then, to extend the results formulated in terms of fibration categories earlier in this document to the context of tribes. 
The notion of tribes is indeed closely related to fibration categories:

\begin{definition}
 A  tribe is a category $\mathcal{T}$ equipped with a class of morphism $\mathbf{F}$, the fibrations, that are stable under composition, contains any isomorphisms, and such that:
 
 \begin{itemize}
  \item $\mathcal{T}$ admits a terminal object $*$, and the unique map $x \to *$ is a fibration for every object $x$.
  \item $\mathcal{T}$ admits pullbacks along fibrations, and the base change of a fibration is a fibration.
  \item Every morphism factors as an anodyne map (i.e, a map that has the left lifting property against fibrations) followed by a fibration.
  \item Anodyne maps are stable under pullbacks along fibrations.
 \end{itemize}

A functor $P : \mathcal{T} \to \mathcal{T}'$ between tribes is a morphism of tribes when it preserves the corresponding structure: it maps fibrations (resp. anodyne maps) to fibrations (resp. anodyne maps), and preserves the terminal object as well as pullbacks along fibrations. 
\end{definition}

\begin{lemma}
\label{corr_tribe}
 Let $F : \mathcal{T} \to \mathcal{S}$ be a functor between tribes. Assume that $F$ maps pullbacks along fibrations to pullback squares that are homotopy pullbacks, that $F(*_\mathcal{T}) \to *_\mathcal{S}$ is a weak equivalence and that anodyne morphisms are preserved by $F$. Then, in the following pullback, 
\[\begin{tikzcd}[ampersand replacement=\&]
	{\mathcal{T}'} \&\& {P\mathcal{S}} \\
	\\
	{\mathcal{T} \times \mathcal{S}} \&\& {\mathcal{S} \times \mathcal{S}}
	\arrow["u", from=1-1, to=1-3]
	\arrow["{<\alpha,\beta>}"', from=1-1, to=3-1]
	\arrow["\lrcorner"{anchor=center, pos=0.125}, draw=none, from=1-1, to=3-3]
	\arrow["{<\partial_0,\partial_1>}", from=1-3, to=3-3]
	\arrow["{F \times id_{\mathcal{S}}}"', from=3-1, to=3-3]
\end{tikzcd}\]
computed in $\mathbf{Cat}$, $\mathcal{T}'$ inherits a tribe structure structure and the two projections $\alpha$ and $\beta$ are morphisms of tribes.

Moreover, if $P\mathcal{S}$ is a path-object for $\mathcal{S}$, with a map $\iota  : \mathcal{S} \to P\mathcal{S}$, then we have the following diagram of fibration categories
\[\begin{tikzcd}[ampersand replacement=\&]
	\& {\mathcal{T}} \\
	{\mathcal{T}} \&\& { \mathcal{S}} \\
	\& {\mathcal{T}'}
	\arrow["{id_\mathcal{T}}"', from=1-2, to=2-1]
	\arrow["F", from=1-2, to=2-3]
	\arrow["m"{description}, dashed, from=1-2, to=3-2]
	\arrow["\alpha", from=3-2, to=2-1]
	\arrow["\beta"', from=3-2, to=2-3]
\end{tikzcd}\]
where $m : \mathcal{T} \to \mathcal{T}'$ is a weak equivalence.
\end{lemma}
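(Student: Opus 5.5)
We follow the proof of \cref{corr} almost verbatim, replacing weak equivalences by anodyne maps wherever the tribe axioms require it. Here $P\mathcal{S}$ is the category of Reedy fibrant spans $x_0 \leftarrow X \rightarrow x_1$ in $\mathcal{S}$ whose legs are trivial fibrations, i.e.\ fibrations that are homotopy equivalences. Recall that in a tribe the weak equivalences are the homotopy equivalences, anodyne maps are weak equivalences, and a fibration is trivial iff it has a section that is a fibrewise deformation retract.

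\textbf{Fibrations.} We take the fibrations of $\mathcal{T}'$ to be the morphisms $f : x \to y$ such that $\alpha(f)$ and $\beta(f)$ are fibrations and the gap map $X \to P := (F(x_\mathcal{T}) \times x_\mathcal{S}) \times_{F(y_\mathcal{T}) \times y_\mathcal{S}} Y$ is a fibration in $\mathcal{S}$.
\begin{itemize}
\item Closure under composition and isomorphisms is a routine pasting of pullbacks.
\item Pullbacks along fibrations are built exactly as in the large diagram of \cref{corr}: componentwise in $\mathcal{T}$ and $\mathcal{S}$, and at the apex by $Q' = X \times_Y Z$, computed in two steps through $Q = P \times_Y Z$.
\item The new apex is again a span of trivial fibrations. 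The leg to $x_\mathcal{S} \times_{y_\mathcal{S}} z_\mathcal{S}$ is handled by the same homotopy-pullback comparison. The leg to $F(x_\mathcal{T} \times_{y_\mathcal{T}} z_\mathcal{T})$ uses the hypothesis that $F$ sends these pullbacks to homotopy pullbacks.
\item The terminal object is the span $F(*) \leftarrow F(*) \rightarrow *$. Its right leg is a fibration because every object of $\mathcal{S}$ is fibrant, and it is a weak equivalence by hypothesis.
\end{itemize}

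\textbf{Anodyne maps.} The natural choice is to declare a map of $\mathcal{T}'$ anodyne when it has the left lifting property against the fibrations just defined. We then show that a map $g$ with $\alpha(g)$, $\beta(g)$ and the apex map all anodyne has this property. Given a lifting problem against a fibration $f$, we solve it in three stages:
\begin{enumerate}
\item lift in $\mathcal{T}$ using $\alpha(g)$;
\item lift in $\mathcal{S}$ using $\beta(g)$, obtaining a map into $F(y_\mathcal{T}) \times y_\mathcal{S}$ compatible with the first lift;
\item lift the apex map against the gap fibration $X \to P$.
\end{enumerate}
Stage 2 uses that $F$ preserves anodyne maps, so $F(\alpha(g))$ can be lifted against the fibration $F(\ldots)$ component. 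We will in fact need the precise form of the lifting problem: $F$ need not send fibrations to fibrations, so the $\mathcal{S}$-side lift must be taken against the gap map rather than against $F(\alpha(f))$.

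\textbf{Factorization.} For $f : x \to y$, we factor $\alpha(f)$ and $\beta(f)$ as anodyne followed by fibration, through $z_\mathcal{T}$ and $z_\mathcal{S}$. We then factor $X \to P_z$, where $P_z$ is the evident pullback of $Y$ along $F(z_\mathcal{T}) \times z_\mathcal{S} \to F(y_\mathcal{T}) \times y_\mathcal{S}$, as an anodyne map $X \to Z$ followed by a fibration. The key check is that $Z \to F(z_\mathcal{T}) \times z_\mathcal{S}$ is a span of trivial fibrations. Since $F$ preserves anodyne maps, $F(x_\mathcal{T}) \to F(z_\mathcal{T})$ is a weak equivalence, and two-out-of-three gives the weak equivalence part; fibrancy of the legs comes from composing fibrations. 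The resulting factor $x \to z$ is anodyne by the criterion above, and $z \to y$ is a fibration by construction.

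\textbf{Stability of anodyne maps under pullback along fibrations.} This is the main obstacle. Since the anodyne class is defined by a lifting property, it is not a priori closed under the pullback construction. The plan is to show that every anodyne map of $\mathcal{T}'$ is a retract of one of the form produced by the factorization. Such a factorization map has all components anodyne, so by retract closure every anodyne map of $\mathcal{T}'$ has componentwise anodyne $\mathcal{T}$-, $\mathcal{S}$- and apex components. Pulling back along a fibration then gives componentwise pullbacks along fibrations in $\mathcal{T}$ and $\mathcal{S}$, and in $\mathcal{S}$ along the gap fibrations at the apex. These are anodyne by the tribe axioms in $\mathcal{T}$ and $\mathcal{S}$, so the pullback is anodyne again by the componentwise criterion.

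\textbf{Exactness and the second statement.} The projections $\alpha$ and $\beta$ visibly preserve fibrations, pullbacks along fibrations and the terminal object, and they preserve anodyne maps by the retract argument above. The second statement is proved exactly as in \cref{corr}: we define $m = \langle id, F, \iota F \rangle$, and $m$ is a weak equivalence by two-out-of-three because $\alpha \circ m = id$ and $\alpha$ is a weak equivalence.
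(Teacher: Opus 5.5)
Your overall route is the paper's: the same fibrations, the same three-stage lifting argument showing that componentwise anodyne maps are anodyne, the same factorization, and the same retract trick for the converse. The gap is in the step you yourself single out, namely stability of anodyne maps under pullback along fibrations.

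Take $g : z \to y$ anodyne and $f : x \to y$ a fibration in $\mathcal{T}'$. The $\mathcal{T}$- and $\mathcal{S}$-components of $f^*g$ are indeed base changes along fibrations. The apex component, however, is $Q' = X \times_Y Z \to X$.

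\begin{itemize}
\item It is the base change along the gap fibration $X \to P$ of $Q = P \times_Y Z \to P$.
\item In turn, $Q \to P$ is the base change of the anodyne apex $Z \to Y$ along $P \to Y$.
\item But $P \to Y$ is a base change of $F(x_\mathcal{T}) \times x_\mathcal{S} \to F(y_\mathcal{T}) \times y_\mathcal{S}$. So it is not a fibration in general, precisely because $F(\alpha(f))$ need not be one.
\end{itemize}

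Hence no tribe axiom of $\mathcal{S}$ tells you that $Q \to P$, or $Q' \to X$, is anodyne. Your claim that the apex is handled \emph{by the tribe axioms in $\mathcal{T}$ and $\mathcal{S}$} does not cover it. This is exactly where the paper invokes the hypothesis that $F$ preserves anodyne maps, and your stability argument never uses that hypothesis.

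Here is what that hypothesis buys. Put $C = F(x_\mathcal{T}) \times x_\mathcal{S}$ and $B = F(x_\mathcal{T} \times_{y_\mathcal{T}} z_\mathcal{T}) \times (x_\mathcal{S} \times_{y_\mathcal{S}} z_\mathcal{S})$. Then $Q \to P$ factors as $Q \to P \times_C B \to P$.
\begin{itemize}
\item The second map is the base change, along the fibration $P \to C$, of $B \to C$. That map is anodyne because $F$ preserves anodyne maps.
\item The first map is a base change of the gap map $Z \to (F(z_\mathcal{T}) \times z_\mathcal{S}) \times_{F(y_\mathcal{T}) \times y_\mathcal{S}} Y$ of $g$, along a map that is again not a fibration in general. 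So it still needs its own argument.
\end{itemize}

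In the case the paper actually uses later, $\mathcal{S} = \mathcal{P}(\mathcal{D})$, you can close the step directly. There the anodyne maps are the monic weak equivalences between fibrant objects. The map $X \times_Y Z \to X$ is monic as a base change of the monomorphism $Z \to Y$. It is a weak equivalence by two-out-of-three: compare the trivial legs $Q' \to x_\mathcal{S} \times_{y_\mathcal{S}} z_\mathcal{S}$ and $X \to x_\mathcal{S}$ with the anodyne map $x_\mathcal{S} \times_{y_\mathcal{S}} z_\mathcal{S} \to x_\mathcal{S}$.

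A smaller point concerns stage 2 of your lifting argument. It needs no hypothesis on $F$ and no lift of $F(\alpha(g))$; as written, you try to lift against $F(\alpha(f))$, which is not a fibration. You simply lift $\beta(g)$ against $\beta(f)$. Stage 3 then lifts the apex of $g$ against the gap fibration of $f$, with the bottom map built from $F(h_\mathcal{T}) \times h_\mathcal{S}$, where $h_\mathcal{T}$ and $h_\mathcal{S}$ are the lifts from stages 1 and 2.
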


\begin{proof}
The notion of fibration is defined as in \cref{corr} and enjoys the same properties.

Since $F$ preserves anodyne maps and pullbacks along fibrations, it maps weak equivalences (i.e homotopy equivalence in $\mathcal{T}$) to weak equivalences.

 Furthermore, we must check that every morphism $f : x \to y$ factors as a componentwise anodyne morphism, where by the term "components" refers to the following maps,
\begin{align*}
 x_\mathcal{T} \to z_\mathcal{T} \to y_\mathcal{T} \\
 x_\mathcal{S} \to z_\mathcal{S} \to y_\mathcal{S} \\
 X \to Z \to  Y
\end{align*}
which is followed by a fibration. To do so, we first form the factorization of $\alpha(f)$ and $\beta(f)$ in $\mathcal{S}$ (through some object $z_\mathcal{S}$) and $\mathcal{T}$ (through $z_\mathcal{T}$), then we extend the factorization as follows:
\[\begin{tikzcd}[ampersand replacement=\&]
	X \& Z \& {P_z} \&\& Y \\
	\\
	\\
	{F(x_\mathcal{T}) \times x_\mathcal{S}} \&\& {F(z_\mathcal{T}) \times z_\mathcal{S}} \&\& {F(y_\mathcal{T}) \times y_\mathcal{S}}
	\arrow["\sim"{description}, from=1-1, to=1-2]
	\arrow[from=1-1, to=4-1]
	\arrow[two heads, from=1-2, to=1-3]
	\arrow[from=1-3, to=1-5]
	\arrow[from=1-3, to=4-3]
	\arrow["\lrcorner"{anchor=center, pos=0.125}, draw=none, from=1-3, to=4-5]
	\arrow[from=1-5, to=4-5]
	\arrow[from=4-1, to=4-3]
	\arrow[from=4-3, to=4-5]
\end{tikzcd}\]
where, in order to make the observation that $Z \to F(z_\mathcal{T}) \times z_\mathcal{S}$ indeed gives a span of trivial fibrations, we crucially rely on the fact that $F$ preserves weak equivalences.

We claim that the so-called componentwise anodyne morphisms are precisely the anodyne morphisms. Indeed, if $f :x \to y$ is a componentwise anodyne morphism, and $p : a \to b$ is a fibration, we may solve a lifting problem of $f$ against $p$ by first doing so with the components in $\mathcal{S}$ and $\mathcal{T}$ (that is, taking the image under $\alpha$ and $\beta$), then by extending the lifts to $P\mathcal{S}$ as follows (using that $X  \to Y$ is anodyne by assumption):

\vspace{2em}
\adjustbox{scale=0.9}{
\begin{tikzcd}[ampersand replacement=\&]
	\& X \&\&\&\& A \\
	\&\&\&\& \bullet \\
	Y \&\&\& B \\
	\& {F(x_\mathcal{T}) \times x_\mathcal{S}} \&\&\&\& {F(a_\mathcal{T}) \times a_\mathcal{S}} \\
	\\
	{F(y_\mathcal{T}) \times y_\mathcal{S}} \&\&\& {F(b_\mathcal{T}) \times b_\mathcal{S}}
	\arrow[curve={height=-12pt}, from=1-2, to=1-6]
	\arrow[from=1-2, to=3-1]
	\arrow[from=1-2, to=4-2]
	\arrow[two heads, from=1-6, to=2-5]
	\arrow[from=1-6, to=4-6]
	\arrow[from=2-5, to=3-4]
	\arrow[from=2-5, to=4-6]
	\arrow["\lrcorner"{anchor=center, pos=0.125, rotate=-45}, draw=none, from=2-5, to=6-4]
	\arrow["H"{description}, curve={height=-18pt}, dashed, from=3-1, to=1-6]
	\arrow["{H_0}"{description}, curve={height=-12pt}, from=3-1, to=2-5]
	\arrow[curve={height=-12pt}, from=3-1, to=3-4]
	\arrow[from=3-1, to=6-1]
	\arrow[from=3-4, to=6-4]
	\arrow[from=4-2, to=4-6]
	\arrow[from=4-2, to=6-1]
	\arrow[from=4-6, to=6-4]
	\arrow["{F(h_\mathcal{T}) \times h_\mathcal{S}}"{description}, from=6-1, to=4-6]
	\arrow[from=6-1, to=6-4]
\end{tikzcd}}
 
\vspace{2em} 
Conversely, an anodyne morphism $f : x  \to y$ may be factored as a componentwise anodyne morphism $f' : x \to x'$ followed by a fibration $x' \to y$, and we get a lift as below:

\[\begin{tikzcd}[ampersand replacement=\&]
	x \&\& {x'} \\
	\\
	y \&\& y
	\arrow[from=1-1, to=1-3]
	\arrow["\sim"{description}, from=1-1, to=3-1]
	\arrow[two heads, from=1-3, to=3-3]
	\arrow[dashed, from=3-1, to=1-3]
	\arrow[from=3-1, to=3-3]
\end{tikzcd}\]
This lift exhibits $f$ as a retract of $f'$, hence the components of $f$ are retracts of the components of $f'$, which means that $f$ is also a componentwise anodyne morphism.

Using the construction of the pullbacks in $\mathcal{T}'$ discussed in the proof of \cref{corr}, together with the fact that $F$ preserves anodyne morphisms, we see that anodyne morphisms are also stable under pullback along fibrations. This concludes the proof that $\mathcal{T}'$ is a tribe. By construction, the projections $\alpha$ and $\beta$ are morphisms of tribes.

The rest of the statement is obtained just as in the proof of \cref{corr}.
\end{proof}

\subsection{The Yoneda embedding}

In this subsection, we will specialize the results of \cref{fibcat_ye} by substituting the fibration category $\mathcal{C}$ for a tribe $\mathcal{T}$.

The fibration category $\mathcal{P}(\mathcal{T})$ introduced in the context of fibration categories is not just a fibration category, but even a ($\pi$-)tribe: this is because the corresponding model structure is proper, as proved in Théorème 3.2 of \cite{cisinski2010invariance}, and because the cofibrations are the monomorphisms. We also have the structure of a tribe on $\mathbf{R}(\mathcal{C})$.

Applying \cref{corr_tribe} in place of \cref{corr}, we can restate the results of \cref{fibcat_yoneda} about the Yoneda embedding, using the same notations.

\begin{lemma}[The Yoneda embedding for tribes]
\label{tribe_yoneda}
 $\mathcal{Q}(\mathcal{T})$ is a tribe, and the projections $\mathcal{Q}(\mathcal{T}) \to \mathcal{T} \times \mathcal{P}(\mathcal{T})$ are morphisms of tribes, and similarly for $\mathbf{Q}(\mathcal{T})$.

\end{lemma}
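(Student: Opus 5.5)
The plan is to reduce the statement to \cref{corr_tribe}, in the same way that \cref{fibcat_yoneda} was deduced from \cref{corr}. Concretely, I would take $F := \mathbf{y} = \mathbf{R}_{\mathcal{T}} \circ \mathbf{y}_0 : \mathcal{T} \to \mathcal{P}(\mathcal{T})$ and $\mathcal{S} := \mathcal{P}(\mathcal{T})$. The latter is a tribe, as observed just above: the model structure is proper and its cofibrations are the monomorphisms. In the restricted case I would instead take $\mathcal{S} := \mathbf{R}(\mathcal{T})$, which is also a tribe. I would use the path-object $P\mathcal{S}$ of Reedy fibrant homotopical spans. Once the three hypotheses of \cref{corr_tribe} are checked, the lemma gives directly that $\mathcal{Q}(\mathcal{T})$ (resp. $\mathbf{Q}(\mathcal{T})$) is a tribe and that the projections are morphisms of tribes.

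The first two hypotheses are inherited from the fibration-category case.
\begin{enumerate}
\item Homotopy pullbacks: $\mathbf{y}$ preserves pullbacks strictly. Moreover $\mathbf{y}_0$ sends fibrations to weak fibrations (Cisinski, Corollaire 3.12), and $\mathbf{R}_{\mathcal{T}}$ preserves limits and weak equivalences. Hence pullbacks along fibrations go to pullbacks that are homotopy pullbacks, by the remark following \cref{corr}.
\item Terminal object: $\mathbf{y}(*)$ is weakly equivalent to the terminal presheaf, because the hom-categories $Hom_{\mathbf{B}\mathcal{T}}(c',*)$ are contractible; indeed $*$ is terminal up to homotopy.
\end{enumerate}
Both facts are exactly what was used in \cref{fibcat_yoneda}, where $\mathcal{T}$ is regarded as a fibration category with the homotopy equivalences as weak equivalences.

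The new ingredient, and the main obstacle, is the third hypothesis: $\mathbf{y}$ must send anodyne maps of $\mathcal{T}$ to anodyne maps of $\mathcal{P}(\mathcal{T})$. Since anodyne maps in $\mathcal{P}(\mathcal{T})$ are the trivial cofibrations, I need monomorphisms that are weak equivalences. The weak equivalence part is clear, since anodyne maps in a tribe are homotopy equivalences and $\mathbf{y}$ is homotopical. Injectivity is the delicate point. An anodyne map $i : a \to b$ has a retraction up to homotopy; more precisely, using the lifting property against $b \to *$, it admits a strict retraction $r$ with $ri = id_a$. Therefore $\mathbf{y}_0(i)$ is a split monomorphism, since $\mathbf{y}_0$ is a strict functor. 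Since $\mathbf{R}_{\mathcal{T}}$ is a functor, it preserves split monos, so $\mathbf{y}(i)$ is a split mono. This yields a trivial cofibration.

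If the tribe structure on $\mathcal{P}(\mathcal{T})$ turns out to use a smaller class of anodyne maps than all trivial cofibrations, I would fall back on a weaker claim. I would show that $\mathbf{y}(i)$ has the left lifting property against fibrations between fibrant objects. For this I would use that a split mono which is a weak equivalence between fibrant objects lifts against fibrations in an injective, left proper localized model structure. Finally, the restricted version $\mathbf{Q}(\mathcal{T})$ follows by the same argument: $\mathbf{y}$ factors through $\mathbf{R}(\mathcal{T})$, which is closed under the relevant pullbacks and factorizations.
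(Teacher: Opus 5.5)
Your proposal is correct and takes the paper's route: the paper also just invokes \cref{corr_tribe} for $\mathbf{y} : \mathcal{T} \to \mathcal{P}(\mathcal{T})$ (resp.\ into $\mathbf{R}(\mathcal{T})$), using that these targets are tribes, and your split-monomorphism argument spells out the anodyne-preservation hypothesis that the paper leaves implicit. One small slip: the strict retraction $r$ with $ri = \mathrm{id}_a$ comes from lifting $i$ against the fibration $a \to *$ (with $\mathrm{id}_a$ along the top), not against $b \to *$.
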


\subsection{Rigidifying flat $\infty$-functors}

Finally, we restate the results of \cref{section3} in terms of tribes. We first have the following analogue of \cref{g_rigidification}. 

\begin{proposition}
\label{g_rigidification_tribe}
Consider a flat $\infty$-functor $F : \mathbf{Ho}_\infty(\mathcal{T})  \to  \mathbf{Ho}_\infty(\mathcal{P}(\mathcal{S}))$, where $\mathcal{T}$ and $\mathcal{S}$ are tribes.
Then, there exists a homotopical functor $F_r : \mathbf{R}(\mathcal{C}) \to \mathcal{P}(\mathcal{D})$,  such that $\mathbf{Ho}_\infty(F_r)$ is homotopic to $F$ as a $\infty$-functor. Moreover $F_r$ preserves finite limits and finite homotopy limits, and maps anodyne morphisms to anodyne morphisms.
\end{proposition}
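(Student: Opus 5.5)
The plan is to reuse the construction in the proof of \cref{g_rigidification} verbatim, regarding the tribes $\mathcal{T}$ and $\mathcal{S}$ as fibration categories (weak equivalences being the homotopy equivalences). Then check the one extra claim: that $F_r$ sends anodyne maps to anodyne maps. (The statement writes $\mathbf{R}(\mathcal{C})$ and $\mathcal{P}(\mathcal{D})$; these are read as $\mathbf{R}(\mathcal{T})$ and $\mathcal{P}(\mathcal{S})$.)

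\emph{Step 1: the construction.} Straighten the left fibration of the transpose $F_c$ over $\mathcal{S}^{op}$ to get $Q_F$. Apply $\rho_\mathbf{Filt}$ from \cref{func_filt} to obtain $P_F : \mathcal{S}^{op} \to \mathbf{FiltPos}$. Use \cref{lquillen_reedy} and \cref{fibcat_fibrant_fb} to realize the diagram strictly as $D_{0,F}$ on the Grothendieck construction $\mathbf{E}$. Then $F_{\mathbf{r},0}$ is the pointwise left Kan extension of the evaluation functors along $\mathbf{E} \to \mathcal{S}^{op}$. Fibrancy of $Q_F$, cofilteredness of the fibres, homotopicality, preservation of finite limits, and the identification of $\mathbf{Ho}_\infty(F_r)$ with $F$ all hold exactly as before. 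Set $F_r := \mathbf{R}_{\mathcal{S}^{op}} \circ F_{\mathbf{r},0}$, so that it preserves finite limits and finite homotopy limits.

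\emph{Step 2: anodyne maps.} In $\mathcal{P}(\mathcal{S})$ the cofibrations are the monomorphisms. An anodyne map there should be a trivial cofibration, i.e.\ a monic weak equivalence; I would verify this via the characterization of anodyne maps in the $\pi$-tribe $\mathcal{P}(\mathcal{S})$ as those with the left lifting property against fibrations between fibrant objects. It therefore suffices to show two things. First, $F_{\mathbf{r},0}$ sends an anodyne map $a : x \to y$ of $\mathbf{R}(\mathcal{T})$ to a pointwise monic weak equivalence. Second, $\mathbf{R}_{\mathcal{S}^{op}}$ preserves pointwise monic weak equivalences.

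For the first point: $a$ has a retraction-up-to-homotopy and is a homotopy equivalence, so $F_{\mathbf{r},0}(a)$ is a weak equivalence by homotopicality. For monicity, note that at each $d$, $F_{\mathbf{r},0}(a)_d$ is a filtered colimit of the maps $\mathbf{ev}_{\pi x'}(a)$ between (fibrant) mapping-type simplicial sets. I would show that each $\mathbf{ev}_{c}(a)$ is monic, using that $a$ is monic in $\mathbf{R}(\mathcal{T})$ (anodyne maps in a tribe are split-mono up to the lifting against $x \to *$, which gives a retraction $r$ with $ra = \mathrm{id}$). A split mono is preserved by any functor, and filtered colimits of monomorphisms in $\mathbf{SSet}$ are monomorphisms. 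So $F_{\mathbf{r},0}(a)$ is even a pointwise split-mono weak equivalence; in fact the retraction $r$ gives a strict retraction $F_{\mathbf{r},0}(r)$.

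For the second point, $\mathbf{R}_{\mathcal{S}^{op}}$ is built from $\mathbf{Ex}^\infty$ (which preserves monos and weak equivalences) and the limit-preserving cobar/totalization construction of \cref{injective_frep}. Being a functor, it preserves split monos, and it preserves weak equivalences between the relevant objects. Hence $F_r(a)$ is a split monic weak equivalence between fibrant objects, i.e.\ a trivial cofibration, which is anodyne.

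\emph{Main obstacle.} The delicate point is the final identification: a split-mono weak equivalence between fibrant presheaves must be anodyne in the tribe structure of $\mathcal{P}(\mathcal{S})$, and $\mathbf{R}_{\mathcal{S}^{op}}$ must really preserve weak equivalences. If the first fails, the fallback is to show directly that $F_r(a)$ lifts against fibrations. The argument would exhibit $F_r(a)$ as a strong deformation retract, using the homotopy $ar \simeq \mathrm{id}$ over a path object. Such a homotopy is transported by $F_r$, because $F_r$ preserves path objects (pullbacks along fibrations and fibrations up to the injective replacement). By the standard tribe argument, a deformation retract onto a fibrant domain is anodyne.
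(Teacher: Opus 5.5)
Your proposal is correct and follows the paper's proof: reuse the construction of \cref{g_rigidification}, identify anodyne maps in $\mathcal{P}(\mathcal{S})$ with monic weak equivalences (trivial cofibrations between fibrant objects), and conclude from homotopicality of $F_r$ together with preservation of monomorphisms. The one difference is how you get monicity: you use that anodyne maps in a tribe are split monos (via lifting against $x \to *$), whereas the paper uses that $F_r$ preserves finite limits; your version is equally valid, slightly more robust, and makes your fallback unnecessary.
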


\begin{proof}
 The construction is just like the fibration category case. To deduce the additional property for $F_r$, we just observe that the anodyne morphisms in $\mathcal{P}(\mathcal{D})$ are precisely the trivial cofibration (between fibrant objects), namely the monomorphisms that are also weak equivalences. We already know that $F_r$ preserves weak equivalences, it also preserves monomorphisms because it preserves finite limits. 
\end{proof}

We now consider a diagram 
\[\begin{tikzcd}[ampersand replacement=\&]
	{\mathcal{T}} \&\& {\mathcal{P}(\mathcal{S})} \\
	{\mathcal{T}'}
	\arrow["F", from=1-1, to=1-3]
	\arrow["K"', from=1-1, to=2-1]
\end{tikzcd}\]
where $\mathcal{T}$, $\mathcal{T}'$ and $\mathcal{S}$ are tribes.
Arguing as for \cref{rlan}, but making use of \cref{tribe_yoneda} and \cref{g_rigidification_tribe} instead of their original counterpart for fibration categories, we have the following restatement of \cref{rlan_c} in terms of tribes, keeping the notation introduced there.

\begin{proposition}
\label{rlan_c_tribe}
 The functor $E$ is a morphism of tribes such that $\mathbf{Ho}_\infty(E \circ n_K)$ is the left Kan extension of $\mathbf{Ho}_\infty(F)$ along $\mathbf{Ho}_\infty(K)$:
\[\begin{tikzcd}[ampersand replacement=\&]
	{\mathbf{Ho}_\infty(\mathcal{T})} \&\& {\mathbf{Ho}_\infty(\mathcal{P}(\mathcal{S}))} \\
	{\mathbf{Ho}_\infty(\mathcal{T}')}
	\arrow[""{name=0, anchor=center, inner sep=0}, "{\mathbf{Ho}_\infty(F)}", from=1-1, to=1-3]
	\arrow["{\mathbf{Ho}_\infty(K)}"', from=1-1, to=2-1]
	\arrow[""{name=1, anchor=center, inner sep=0}, "{\mathbf{Ho}_\infty(E \circ n_K)}"', from=2-1, to=1-3]
	\arrow[shift right=5, between={0.2}{0.8}, Rightarrow, from=0, to=1]
\end{tikzcd}\]
\end{proposition}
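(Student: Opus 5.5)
The plan is to reduce \cref{rlan_c_tribe} to its fibration category counterpart \cref{rlan_c}, and then add the single extra property a morphism of tribes needs: preservation of anodyne maps. Every tribe has an underlying fibration category, with weak equivalences the homotopy equivalences and the same fibrations. On this structure, $\mathcal{P}(\mathcal{S})$ and $\mathcal{P}(\mathcal{T})$ carry the fibration category structure used before. Moreover $\mathbf{Ho}_\infty$ of a tribe is computed from this underlying fibration category. So the Kan extension statement, that $\mathbf{Ho}_\infty(E \circ n_K)$ is the left Kan extension of $\mathbf{Ho}_\infty(F)$ along $\mathbf{Ho}_\infty(K)$, is literally the one in \cref{rlan_c}. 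The same holds for the parts of exactness concerning fibrations, weak equivalences, the terminal object and pullbacks along fibrations. What remains is to build $E$ with the tribe versions of the ingredients and to check anodyne preservation.

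The construction goes as follows. Take the rigidified diagram $D_{0,F} : \mathbf{E} \to \mathbf{R}(\mathcal{T})^{op}$ from the proof of \cref{g_rigidification_tribe}. Form the pointwise left Kan extension along the opfibration $\mathbf{E} \to \mathcal{S}^{op}$ of the corepresentable functors $\mathbf{Y}' \circ \mathbf{y}^{op} \circ D_{0,F}$. Transpose, and postcompose with $\mathbf{R}_{\mathcal{S}}$. Fiberwise over $s \in \mathcal{S}$, the result is $\mathbf{R}_\mathcal{S}$ applied to a filtered colimit, indexed by the filtered poset $P_F(s)$, of evaluation-type functors $X \mapsto Hom(\mathbf{y}^*\mathbf{y}(c), X)$ on fibrant objects of $\mathcal{P}(\mathcal{T})$. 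As in \cref{g_rigidification}, each such functor preserves finite limits, fibrations and weak equivalences between fibrant objects. Filtered colimits in $\mathbf{SSet}$ preserve all three (\cite{rosicky2009combinatorial}), and $\mathbf{R}_\mathcal{S}$ preserves limits and sends pointwise fibrations to injective fibrations. So $E$ preserves finite limits, fibrations and weak equivalences, which gives exactness. The homotopical identification with the left Kan extension then follows as in \cref{unp_lan}, via the chain of equivalences \eqref{lan2} applied fiberwise over $\mathcal{S}^{op}$.

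For anodyne maps, I would argue exactly as in \cref{g_rigidification_tribe}. In $\mathcal{P}(\mathcal{T})$ and $\mathcal{P}(\mathcal{S})$ the anodyne maps between fibrant objects are the trivial cofibrations, namely the monomorphisms that are weak equivalences. $E$ preserves weak equivalences by the previous paragraph. It preserves monomorphisms because it preserves finite limits: a map is mono exactly when its kernel pair diagonal is an isomorphism, and $E$ preserves pullbacks of arbitrary maps, not just fibrations, since both the filtered colimit of corepresentables and $\mathbf{R}_\mathcal{S}$ commute with all finite limits. Hence $E$ sends anodyne maps to anodyne maps and is a morphism of tribes. If one also wants the span $\mathcal{T}' \leftarrow \mathcal{Q}'(K) \to \mathcal{P}(\mathcal{T})$ to consist of morphisms of tribes, apply \cref{corr_tribe} to $n_K$. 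For that one must check that $n_K$ preserves anodyne maps: it is homotopical, it preserves the relevant pullbacks up to homotopy, and $\mathbf{R}_\mathcal{T}$ preserves monomorphisms since it preserves limits.

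I expect the main obstacle to be exactly this monomorphism claim. It requires that $E$ preserve all finite limits, not only pullbacks along fibrations. That in turn needs care with the evaluation functors $Hom(\mathbf{y}^*X,-)$: they are only guaranteed well behaved on fibrant objects. In addition, the strictification $H'$ inside $n_K$ preserves limits only in the second variable. I would first verify that the strict functor $\mathcal{P}(\mathcal{T}) \to \mathbf{SSet}^{\mathcal{S}^{op}}$, before applying $\mathbf{R}_\mathcal{S}$, is a filtered colimit of representable hom-functors $Hom(A,-)$. Such functors preserve all limits on the nose, and filtered colimits of them preserve finite limits, which settles the claim.
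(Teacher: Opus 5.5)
Your proposal is correct and follows essentially the paper's route. You build $E$ exactly as in \cref{rlan_c} from the tribe versions of the ingredients (\cref{tribe_yoneda}, \cref{g_rigidification_tribe}), and you get anodyne preservation, as in the proof of \cref{g_rigidification_tribe}, from $E$ preserving weak equivalences and, through finite limits, monomorphisms.

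One small point concerns your optional remark on $n_K$. The fact that $\mathbf{R}_\mathcal{T}$ preserves monos only helps once $\mathbf{N}H'(K-,a)$ is itself a mono. This does hold: an anodyne $a : c' \to d'$ in a tribe is a split mono (lift it against the fibration $c' \to *$), and $H'(K-,-)$ preserves limits, hence monos, in its second variable.
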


\begin{remark}
 The homotopy left Kan extension of $F$ along $K$ is hence represented by the following span of morphisms of tribes:
 
\[\begin{tikzcd}[ampersand replacement=\&]
	\&\& {\mathcal{Q}'(K)} \\
	\&\&\& {\mathcal{P}(\mathcal{T})} \\
	{\mathcal{T}'} \&\&\&\& {\mathcal{P}(\mathcal{S})}
	\arrow[from=1-3, to=2-4]
	\arrow[from=1-3, to=3-1]
	\arrow["E", from=2-4, to=3-5]
\end{tikzcd}\]
\end{remark}

\newpage

\begingroup
\setlength{\emergencystretch}{.5em}
\RaggedRight
\printbibliography
\endgroup

\end{document}